\newcounter {res}[section]
\numberwithin{res}{section}
\newtheorem{thm}[res]{Theorem}
\newtheorem{lem}[res]{Lemma}
\newtheorem{prop}[res]{Proposition}
\newtheorem{cor}[res]{Corollary}
\theoremstyle{definition}
\newtheorem{notation}[res]{Notation}
\newtheorem{dfn}[res]{Definition}
\newtheorem{rmk}[res]{Remark}
\newcommand{\NB}[1]{\ensuremath{\vcenter{\hbox{#1}}}}
\newcommand{\ZZ}{\ensuremath{\mathbb{Z}}} 
\newcommand{\CC}{\ensuremath{\mathbb{C}}} 
\newcommand{\RR}{\ensuremath{\mathbb{R}}}
\newcommand{\id}{\mathrm{Id}}
\newcommand{\sdim}{\mathop{\mathrm{sdim}}\nolimits}
\newcommand{\gll}{\ensuremath{\mathfrak{gl}}}
\newcommand{\sll}{\ensuremath{\mathfrak{sl}}}
\newcommand{\ie}{i.~e.{} }
\newcommand{\resp}{resp.{} }
\renewcommand{\deg}[2][{}]{\ensuremath{\mathrm{deg}_{#1}(#2)}}
\newcommand\kup[1]{\left\langle #1 \right\rangle}
\newcommand\kups[1]{\left\llangle #1 \right\rrangle}
\newcommand{\svect}{\ensuremath{\mathsf{Svect}}}
\newcommand{\vect}{\ensuremath{\mathsf{vect}}}
\newcommand{\qbin}[2]{\ensuremath
\begin{bmatrix}
  #1 + #2 \\
#1 \quad #2
\end{bmatrix}
}
\newcommand{\Xing}{\NB{\scalebox{1.3}{\ensuremath{\times}}}}
\newcommand{\set}[1]{\ensuremath{\llbracket #1 \rrbracket}} 
\newcommand{\ps}[2][{}]{\ensuremath{\mathcal{P}_{#1}(#2)}} 
\newcommand{\mps}[2][{}]{\ensuremath{\mathcal{M}_{#1}(#2)}} 
\newcommand{\col}[2][{}]{\ensuremath{\mathrm{col}_{#1}(#2)}}
\newcommand{\Sym}{\ensuremath{\mathrm{Sym}}}
\title{State sums for some super quantum link invariants}
 \author{Louis-Hadrien Robert}
 \address{Université de Genève, rue du lièvre 2--4, 1227 Genève, Switzerland}
 \email{\href{mailto:louis-hadrien.robert@unige.ch}{louis-hadrien.robert@unige.ch}}
 \author{Emmanuel Wagner}
 \address{Univ Paris Diderot, IMJ-PRG, UMR 7586 CNRS, F-75013, Paris, France} 
\email{\href{mailto:emmanuel.wagner@imj-prg.fr}{emmanuel.wagner@imj-prg.fr}}
\tikzset{->-/.style={decoration={
  markings,
  mark=at position .5 with {\arrow{>}}},postaction={decorate}}}
\tikzset{-<-/.style={decoration={
  markings,
  mark=at position .5 with {\arrow{<}}},postaction={decorate}}}
\DeclareFontFamily{OMX}{MnSymbolE}{}
\DeclareSymbolFont{MnLargeSymbols}{OMX}{MnSymbolE}{m}{n}
\DeclareFontShape{OMX}{MnSymbolE}{m}{n}{
    <-6>  MnSymbolE5
   <6-7>  MnSymbolE6
   <7-8>  MnSymbolE7
   <8-9>  MnSymbolE8
   <9-10> MnSymbolE9
  <10-12> MnSymbolE10
  <12->   MnSymbolE12
}{}
\DeclareFontShape{OMX}{MnSymbolE}{b}{n}{
    <-6>  MnSymbolE-Bold5
   <6-7>  MnSymbolE-Bold6
   <7-8>  MnSymbolE-Bold7
   <8-9>  MnSymbolE-Bold8
   <9-10> MnSymbolE-Bold9
  <10-12> MnSymbolE-Bold10
  <12->   MnSymbolE-Bold12
}{}
\let\llangle\@undefined
\let\rrangle\@undefined
\DeclareMathDelimiter{\llangle}{\mathopen}
                     {MnLargeSymbols}{'164}{MnLargeSymbols}{'164}
\DeclareMathDelimiter{\rrangle}{\mathclose}
                     {MnLargeSymbols}{'171}{MnLargeSymbols}{'171}
 \newcommand{\digona}{\ensuremath{\vcenter{\hbox{\tikz[scale=0.4]{
\coordinate (B) at (0,0);
\coordinate (V1) at (0,0.5);
\coordinate (V2) at (0,2.5);
\coordinate (T) at (0,3);
\draw[white] (0, -0.5) -- (0, 3.5);
\draw[->] (B) -- (V1) node[midway, left] {\tiny{$m+n$}};
\draw[->] (V2) -- (T) node[midway, left] {\tiny{$m+n$}};
\draw[->] (V1) .. controls +(+0.5, +0.5) and +(+0.5, -0.5).. (V2) node[midway, right] {\tiny{$n$}};
\draw[->] (V1) .. controls +(-0.5, +0.5) and +(-0.5, -0.5).. (V2) node[midway, left] {\tiny{$m$}};
}}}}}
\newcommand{\digonaone}{\ensuremath{\vcenter{\hbox{\tikz[scale=0.4]{
\coordinate (B) at (0,0);
\coordinate (V1) at (0,0.5);
\coordinate (V2) at (0,2.5);
\coordinate (T) at (0,3);
\draw[white] (0, -0.5) -- (0, 3.5);
\draw[->] (B) -- (V1) node[midway, left] {\tiny{$m+1$}};
\draw[->] (V2) -- (T) node[midway, left] {\tiny{$m+1$}};
\draw[->] (V1) .. controls +(+0.5, +0.5) and +(+0.5, -0.5).. (V2) node[midway, right] {\tiny{$1$}};
\draw[->] (V1) .. controls +(-0.5, +0.5) and +(-0.5, -0.5).. (V2) node[midway, left] {\tiny{$m$}};
}}}}}
\newcommand{\digonbone}{\ensuremath{\vcenter{\hbox{\tikz[scale=0.4]{
\coordinate (B) at (0,0);
\coordinate (V1) at (0,0.5);
\coordinate (V2) at (0,2.5);
\coordinate (T) at (0,3);
\draw[white] (0, -0.5) -- (0, 3.5);
\draw[->] (B) -- (V1) node[midway, left] {\tiny{$m+1$}};
\draw[->] (V2) -- (T) node[midway, left] {\tiny{$m+1$}};
\draw[->] (V1) .. controls +(+0.5, +0.5) and +(+0.5, -0.5).. (V2) node[midway, right] {\tiny{$m$}};
\draw[->] (V1) .. controls +(-0.5, +0.5) and +(-0.5, -0.5).. (V2) node[midway, left] {\tiny{$1$}};
}}}}}
\newcommand{\verta}{\ensuremath{\vcenter{\hbox{\tikz[scale=0.4]{
\coordinate (B) at (0,0);
\coordinate (T) at (0,3);
\draw[white] (0, -0.5) -- (0, 3.5);
\draw[->] (B) -- (T) node[midway, right] {\tiny{$m+n$}};
}}}}}
\newcommand{\vertaone}{\ensuremath{\vcenter{\hbox{\tikz[scale=0.4]{
\coordinate (B) at (0,0);
\coordinate (T) at (0,3);
\draw[white] (0, -0.5) -- (0, 3.5);
\draw[->] (B) -- (T) node[midway, right] {\tiny{$m+1$}};
}}}}}
\newcommand{\digonb}{\ensuremath{\vcenter{\hbox{\tikz[scale=0.4]{
\coordinate (B) at (0,0);
\coordinate (V1) at (0,0.5);
\coordinate (V2) at (0,2.5);
\coordinate (T) at (0,3);
\draw[white] (0, -0.5) -- (0, 3.5);
\draw[->] (B) -- (V1) node[midway, left] {\tiny{$m$}};
\draw[->] (V2) -- (T) node[midway, left] {\tiny{$m$}};
\draw[<-] (V1) .. controls +(+0.5, +0.5) and +(+0.5, -0.5).. (V2) node[midway, right] {\tiny{$n$}};
\draw[->] (V1) .. controls +(-0.5, +0.5) and +(-0.5, -0.5).. (V2) node[midway, left] {\tiny{$m+n$}};
}}}}}
\newcommand{\digonbbbb}{\ensuremath{\vcenter{\hbox{\tikz[scale=0.4]{
\coordinate (B) at (0,0);
\coordinate (V1) at (0,0.5);
\coordinate (V2) at (0,2.5);
\coordinate (T) at (0,3);
\draw[white] (0, -0.5) -- (0, 3.5);
\draw[->] (B) -- (V1) node[midway, left] {\tiny{$m$}};
\draw[->] (V2) -- (T) node[midway, left] {\tiny{$m$}};
\draw[<-] (V1) .. controls +(-0.5, +0.5) and +(-0.5, -0.5).. (V2) node[midway, left] {\tiny{$n$}};
\draw[->] (V1) .. controls +(+0.5, +0.5) and +(+0.5, -0.5).. (V2) node[midway, right] {\tiny{$m+n$}};
}}}}}
\newcommand{\digonbb}{\ensuremath{\vcenter{\hbox{\tikz[scale=0.4]{
\coordinate (B) at (0,0);
\coordinate (V1) at (0,0.5);
\coordinate (V2) at (0,2.5);
\coordinate (T) at (0,3);
\draw[white] (0, -0.5) -- (0, 3.5);
\draw[->] (B) -- (V1) node[midway, left] {\tiny{$m$}};
\draw[->] (V2) -- (T) node[midway, left] {\tiny{$m$}};
\draw[<-] (V1) .. controls +(+0.5, +0.5) and +(+0.5, -0.5).. (V2) node[midway, right] {\tiny{$1$}};
\draw[->] (V1) .. controls +(-0.5, +0.5) and +(-0.5, -0.5).. (V2) node[midway, left] {\tiny{$m+1$}};
}}}}}
\newcommand{\digonbbb}{\ensuremath{\vcenter{\hbox{\tikz[scale=0.4]{
\coordinate (B) at (0,0);
\coordinate (V1) at (0,0.5);
\coordinate (V2) at (0,2.5);
\coordinate (T) at (0,3);
\draw[white] (0, -0.5) -- (0, 3.5);
\draw[->] (B) -- (V1) node[midway, left] {\tiny{$m$}};
\draw[->] (V2) -- (T) node[midway, left] {\tiny{$m$}};
\draw[<-] (V1) .. controls +(-0.5, +0.5) and +(-0.5, -0.5).. (V2) node[midway, left] {\tiny{$1$}};
\draw[->] (V1) .. controls +(+0.5, +0.5) and +(+0.5, -0.5).. (V2) node[midway, right] {\tiny{$m+1$}};
}}}}}
\newcommand{\vertb}{\ensuremath{\vcenter{\hbox{\tikz[scale=0.4]{
\coordinate (B) at (0,0);
\coordinate (T) at (0,3);
\draw[white] (0, -0.5) -- (0, 3.5);
\draw[->] (B) -- (T) node[midway, right] {\tiny{$m$}};
}}}}}
\newcommand{\stgamma}{\ensuremath{\vcenter{\hbox{\tikz[scale=0.3]{
\coordinate (B) at (0,0);
\coordinate (V1) at (0,1);
\coordinate (V2) at (1,2);
\coordinate (T1) at (-2,3);
\coordinate (T2) at (0,3);
\coordinate (T3) at (2,3);
\draw[>-] (B) -- (V1) node [at start, below] {\tiny{$i+j+k$}};
\draw[->] (V1) -- (T1) node [at end, above] {\tiny{$i$}};
\draw[->] (V1)  -- (V2) node[midway, right] {\tiny{$j+k$}};
\draw[->] (V2) -- (T2) node[at end, above] {\tiny{$j$}};
\draw[->] (V2) -- (T3) node[at end, above] {\tiny{$k$}};
}}}}}
\newcommand{\stgammaprime}{\ensuremath{\vcenter{\hbox{\tikz[scale=0.3]{
\coordinate (B) at (0,0);
\coordinate (V1) at (0,1);
\coordinate (V2) at (-1,2);
\coordinate (T1) at (-2,3);
\coordinate (T2) at (0,3);
\coordinate (T3) at (2,3);
\draw[>-] (B) -- (V1) node [at start, below] {\tiny{$i+j+k$}};
\draw[->] (V1) -- (T3) node [at end, above] {\tiny{$k$}};
\draw[->] (V1)  -- (V2) node[midway, left] {\tiny{$i+j$}};
\draw[->] (V2) -- (T1) node[at end, above] {\tiny{$i$}};
\draw[->] (V2) -- (T2) node[at end, above] {\tiny{$j$}};
}}}}}
\newcommand{\stgammar}{\ensuremath{\vcenter{\hbox{\tikz[scale=0.3, yscale = -1]{
\coordinate (B) at (0,0);
\coordinate (V1) at (0,1);
\coordinate (V2) at (1,2);
\coordinate (T1) at (-2,3);
\coordinate (T2) at (0,3);
\coordinate (T3) at (2,3);
\draw[<-] (B) -- (V1) node [at start, above] {\tiny{$i+j+k$}};
\draw[-<] (V1) -- (T1) node [at end, below] {\tiny{$i$}};
\draw[-<] (V1)  -- (V2) node[midway, right] {\tiny{$j+k$}};
\draw[-<] (V2) -- (T2) node[at end, below] {\tiny{$j$}};
\draw[-<] (V2) -- (T3) node[at end, below] {\tiny{$k$}};
}}}}}
\newcommand{\stgammaprimer}{\ensuremath{\vcenter{\hbox{\tikz[scale=0.3, yscale = -1]{
\coordinate (B) at (0,0);
\coordinate (V1) at (0,1);
\coordinate (V2) at (-1,2);
\coordinate (T1) at (-2,3);
\coordinate (T2) at (0,3);
\coordinate (T3) at (2,3);
\draw[<-] (B) -- (V1) node [at start, above] {\tiny{$i+j+k$}};
\draw[-<] (V1) -- (T3) node [at end, below] {\tiny{$k$}};
\draw[-<] (V1)  -- (V2) node[midway, left] {\tiny{$i+j$}};
\draw[-<] (V2) -- (T1) node[at end, below] {\tiny{$i$}};
\draw[-<] (V2) -- (T2) node[at end, below] {\tiny{$j$}};
}}}}} \newcommand{\squarea}{\ensuremath{\vcenter{\hbox{\tikz[scale=0.4]{
\coordinate (B1) at (-1,0);
\coordinate (B2) at (1,0);
\coordinate (C1) at (-1,1);
\coordinate (D1) at (-1,2);
\coordinate (C2) at (1,1);
\coordinate (D2) at (1,2);
\coordinate (T1) at (-1,3);
\coordinate (T2) at (1,3);
\draw[->] (B1) -- (C1) node[at start, below] {\tiny{$1$}};
\draw[->] (D1) -- (C1) node[midway, left   ] {\tiny{$m$}};
\draw[->-] (D1) -- (T1) node[at end , above ] {\tiny{$1$}};
\draw[->] (C2) -- (B2) node[at end, below] {\tiny{$m$}};
\draw[->-] (C2) -- (D2) node[midway, right] {\tiny{$1$}};
\draw[->] (T2) -- (D2) node[at start, above] {\tiny{$m$}};
\draw[->-] (D2) -- (D1) node[midway, above] {\tiny{$m+1$}};
\draw[->-] (C1) -- (C2) node[midway, below] {\tiny{$m+1$}};
}}}}}
\newcommand{\squaremm}{\ensuremath{\vcenter{\hbox{\tikz[scale=0.4]{
\coordinate (B1) at (-1,0);
\coordinate (B2) at (1,0);
\coordinate (C1) at (-1,1);
\coordinate (D1) at (-1,2);
\coordinate (C2) at (1,1);
\coordinate (D2) at (1,2);
\coordinate (T1) at (-1,3);
\coordinate (T2) at (1,3);
\draw[->] (B1) -- (C1) node[at start, below] {\tiny{$m$}};
\draw[-<-] (D1) -- (C1) node[midway, left   ] {\tiny{$m+1$}};
\draw[->] (D1) -- (T1) node[at end , above ] {\tiny{$m$}};
\draw[->] (C2) -- (B2) node[at end, below] {\tiny{$m$}};
\draw[-<-] (C2) -- (D2) node[midway, right] {\tiny{$m+1$}};
\draw[->] (T2) -- (D2) node[at start, above] {\tiny{$m$}};
\draw[-<-] (D2) -- (D1) node[midway, above] {\tiny{$1$}};
\draw[-<-] (C1) -- (C2) node[midway, below] {\tiny{$1$}};
}}}}}
\newcommand{\crossposone}{\ensuremath{\vcenter{\hbox{\tikz[font=\tiny, scale=0.4]{
\coordinate (B1) at (-1,-1);
\coordinate (B2) at ( 1,-1);
\coordinate (T1) at (-1, 1);
\coordinate (T2) at ( 1, 1);
\draw [->] (B2) -- (T1) node[pos=0.8, left] {$1$};
\fill[white] (0,0) circle (2mm);
\draw [->] (B1) -- (T2) node[pos=0.8, right] {$1$};
}}}}}
\newcommand{\crossnegone}{\ensuremath{\vcenter{\hbox{\tikz[font=\tiny, scale=0.4]{
\coordinate (B1) at (-1,-1);
\coordinate (B2) at ( 1,-1);
\coordinate (T1) at (-1, 1);
\coordinate (T2) at ( 1, 1);
\draw [->] (B1) -- (T2) node[pos=0.8, right] {$1$};
\fill[white] (0,0) circle (2mm);
\draw [->] (B2) -- (T1) node[pos=0.8, left] {$1$};
}}}}}
\newcommand{\twovertsmoothone}{\ensuremath{\vcenter{\hbox{\tikz[font=\tiny, scale=0.4]{
\coordinate (B1) at (-1,-1);
\coordinate (B2) at ( 1,-1);
\coordinate (T1) at (-1, 1);
\coordinate (T2) at ( 1, 1);
\draw [->] (B1) .. controls +( 0.3, 0.3) and  +( 0.3, -0.3) .. (T1) node[pos=0.5, left] {$1$};
\draw [->] (B2) .. controls +(-0.3, 0.3) and  +(-0.3, -0.3) .. (T2) node[pos=0.5, right] {$1$};
}}}}}
\newcommand{\dumbleone}{\ensuremath{\vcenter{\hbox{\tikz[font=\tiny, scale=0.4]{
\coordinate (B1) at (-1,-1);
\coordinate (B2) at ( 1,-1);
\coordinate (MB) at ( 0,-0.5);
\coordinate (MT) at ( 0, 0.5);
\coordinate (T1) at (-1, 1);
\coordinate (T2) at ( 1, 1);
\draw [->-] (B1) -- (MB)  node [font=\tiny, below, midway] {$1$};
\draw [->-] (B2) -- (MB)  node [font=\tiny, below, midway] {$1$};
\draw [->-] (MB) -- (MT) node [font=\tiny, right, midway] {$2$};
\draw [-<-] (T1) -- (MT)  node [font=\tiny, above, midway] {$1$};
\draw [-<-] (T2) -- (MT)  node [font=\tiny, above, midway] {$1$};
}}}}}
\newcommand{\squaremmm}{\ensuremath{\vcenter{\hbox{\tikz[scale=0.4]{
\coordinate (B1) at (-1,0);
\coordinate (B2) at (1,0);
\coordinate (C1) at (-1,1);
\coordinate (D1) at (-1,2);
\coordinate (C2) at (1,1);
\coordinate (D2) at (1,2);
\coordinate (T1) at (-1,3);
\coordinate (T2) at (1,3);
\draw[->] (B1) -- (C1) node[at start, below] {\tiny{$m$}};
\draw[-<-] (D1) -- (C1) node[midway, left   ] {\tiny{$m-1$}};
\draw[->] (D1) -- (T1) node[at end , above ] {\tiny{$m$}};
\draw[->] (C2) -- (B2) node[at end, below] {\tiny{$m$}};
\draw[-<-] (C2) -- (D2) node[midway, right] {\tiny{$m-1$}};
\draw[->] (T2) -- (D2) node[at start, above] {\tiny{$m$}};
\draw[->-] (D2) -- (D1) node[midway, above] {\tiny{$1$}};
\draw[->-] (C1) -- (C2) node[midway, below] {\tiny{$1$}};
}}}}}
\newcommand{\twoverta}{\ensuremath{\vcenter{\hbox{\tikz[scale=0.4]{
\coordinate (B1) at (-1,0);
\coordinate (T1) at (-1,3);
\coordinate (B2) at (1,0);
\coordinate (T2) at (1,3);
\draw[->] (B1) -- (T1) node[midway, left] {\tiny{$1$}};
\draw[->] (T2) -- (B2) node[midway, right] {\tiny{$m$}};
}}}}}
\newcommand{\twovertmm}{\ensuremath{\vcenter{\hbox{\tikz[scale=0.4]{
\coordinate (B1) at (-1,0);
\coordinate (T1) at (-1,3);
\coordinate (B2) at (1,0);
\coordinate (T2) at (1,3);
\draw[->] (B1) -- (T1) node[midway, left] {\tiny{$m$}};
\draw[->] (T2) -- (B2) node[midway, right] {\tiny{$m$}};
}}}}}
\newcommand{\twovertddd}{\ensuremath{\vcenter{\hbox{\tikz[scale=0.4]{
\coordinate (B1) at (-1,0);
\coordinate (T1) at (-1,3);
\coordinate (B2) at (1,0);
\coordinate (T2) at (1,3);
\draw[->] (B1) -- (T1) node[midway, left] {\tiny{$n$}};
\draw[<-] (T2) -- (B2) node[midway, right] {\tiny{$n+k$}};
}}}}}
\newcommand{\doubleYa}{\ensuremath{\vcenter{\hbox{\tikz[scale=0.4]{
\coordinate (B1) at (-1,0);
\coordinate (T1) at (-1,3);
\coordinate (C) at (0,1);
\coordinate (D) at (0,2);
\coordinate (B2) at (1,0);
\coordinate (T2) at (1,3);
\draw[->] (B1) -- (C) node[at start, below] {\tiny{$1$}};
\draw[->] (C) -- (B2) node[at end, below] {\tiny{$m$}};
\draw[->] (D) -- (C) node[midway, left] {\tiny{$m-1$}};
\draw[->] (T2) -- (D) node[at start, above] {\tiny{$m$}};
\draw[->] (D) -- (T1) node[at end, above] {\tiny{$1$}};
}}}}}
\newcommand{\squareb}{\ensuremath{\vcenter{\hbox{\tikz[scale=0.55]{
\coordinate (B1) at (-1,0);
\coordinate (B2) at (1,0);
\coordinate (C1) at (-1,1);
\coordinate (D1) at (-1,2);
\coordinate (C2) at (1,1);
\coordinate (D2) at (1,2);
\coordinate (T1) at (-1,3);
\coordinate (T2) at (1,3);
\draw[->] (B1) -- (C1) node[at start, below] {\tiny{$1$}};
\draw[->] (C1) -- (D1) node[midway, left   ] {\tiny{$l+n$}};
\draw[->] (D1) -- (T1) node[at end , above ] {\tiny{$l$}};
\draw[->] (B2) -- (C2) node[at start, below] {\tiny{$m+l-1$}};
\draw[->] (C2) -- (D2) node[midway, right] {\tiny{$m-n$}};
\draw[->] (D2) -- (T2) node[at end, above] {\tiny{$m$}};
\draw[->] (D1) -- (D2) node[midway, above] {\tiny{$n$}};
\draw[->] (C2) -- (C1) node[midway, below] {\tiny{$l+n-1$}};
}}}}}
\newcommand{\squarec}{\ensuremath{\vcenter{\hbox{\tikz[xscale=0.65, yscale=0.55]{
\coordinate (B1) at (-1,0);
\coordinate (B2) at (1,0);
\coordinate (C1) at (-1,1.1);
\coordinate (D1) at (-1,1.9);
\coordinate (C2) at (1,0.9);
\coordinate (D2) at (1,2.1);
\coordinate (T1) at (-1,3);
\coordinate (T2) at (1,3);
\draw[->] (B1) -- (C1) node[at start, below] {\tiny{$n$}};
\draw[->] (C1) -- (D1) node[midway, left   ] {\tiny{$n+k $}};
\draw[->] (D1) -- (T1) node[at end , above ] {\tiny{$m$}};
\draw[->] (B2) -- (C2) node[at start, below] {\tiny{$m+l$}};
\draw[->] (C2) -- (D2) node[midway, right] {\tiny{$m+l-k$}};
\draw[->] (D2) -- (T2) node[at end, above] {\tiny{$n+l$}};
\draw[->] (D1) -- (D2) node[midway, above] {\tiny{$n+k-m$}};
\draw[->] (C2) -- (C1) node[midway, below] {\tiny{$k$}};
}}}}}
\newcommand{\squareccc}{\ensuremath{\vcenter{\hbox{\tikz[xscale=0.65, yscale=0.55]{
\coordinate (B1) at (-1,0);
\coordinate (B2) at (1,0);
\coordinate (C1) at (-1,1.1);
\coordinate (D1) at (-1,1.9);
\coordinate (C2) at (1,0.9);
\coordinate (D2) at (1,2.1);
\coordinate (T1) at (-1,3);
\coordinate (T2) at (1,3);
\draw[->] (B1) -- (C1) node[at start, below] {\tiny{$n$}};
\draw[->] (C1) -- (D1) node[midway, left   ] {\tiny{$n+1 $}};
\draw[->] (D1) -- (T1) node[at end , above ] {\tiny{$n$}};
\draw[->] (B2) -- (C2) node[at start, below] {\tiny{$n+k$}};
\draw[->] (C2) -- (D2) node[midway, right] {\tiny{$n+k-1$}};
\draw[->] (D2) -- (T2) node[at end, above] {\tiny{$n+k$}};
\draw[->] (D1) -- (D2) node[midway, above] {\tiny{$1$}};
\draw[->] (C2) -- (C1) node[midway, below] {\tiny{$1$}};
}}}}}
\newcommand{\squared}{\ensuremath{\vcenter{\hbox{\tikz[yscale=0.55, xscale=0.65]{
\coordinate (B1) at (-1,0);
\coordinate (B2) at (1,0);
\coordinate (C1) at (-1,0.9);
\coordinate (D1) at (-1,2.1);
\coordinate (C2) at (1,1.1);
\coordinate (D2) at (1,1.9);
\coordinate (T1) at (-1,3);
\coordinate (T2) at (1,3);
\draw[->] (B1) -- (C1) node[at start, below] {\tiny{$n$}};
\draw[->] (C1) -- (D1) node[midway, left   ] {\tiny{$m-j$}};
\draw[->] (D1) -- (T1) node[at end , above ] {\tiny{$m$}};
\draw[->] (B2) -- (C2) node[at start, below] {\tiny{$m+l$}};
\draw[->] (C2) -- (D2) node[midway, right] {\tiny{$n+l+j$}};
\draw[->] (D2) -- (T2) node[at end, above] {\tiny{$n+l$}};
\draw[->] (D2) -- (D1) node[midway, above] {\tiny{$j$}};
\draw[->] (C1) -- (C2) node[midway, below] {\tiny{$n+j-m$}};
}}}}}
\newcommand{\squareddd}{\ensuremath{\vcenter{\hbox{\tikz[yscale=0.55, xscale=0.65]{
\coordinate (B1) at (-1,0);
\coordinate (B2) at (1,0);
\coordinate (C1) at (-1,0.9);
\coordinate (D1) at (-1,2.1);
\coordinate (C2) at (1,1.1);
\coordinate (D2) at (1,1.9);
\coordinate (T1) at (-1,3);
\coordinate (T2) at (1,3);
\draw[->] (B1) -- (C1) node[at start, below] {\tiny{$n$}};
\draw[->] (C1) -- (D1) node[midway, left   ] {\tiny{$n-1$}};
\draw[->] (D1) -- (T1) node[at end , above ] {\tiny{$n$}};
\draw[->] (B2) -- (C2) node[at start, below] {\tiny{$n+k$}};
\draw[->] (C2) -- (D2) node[midway, right] {\tiny{$n+k+1$}};
\draw[->] (D2) -- (T2) node[at end, above] {\tiny{$n+k$}};
\draw[->] (D2) -- (D1) node[midway, above] {\tiny{$1$}};
\draw[->] (C1) -- (C2) node[midway, below] {\tiny{$1$}};
}}}}}
\newcommand{\squaree}{\ensuremath{\vcenter{\hbox{\tikz[yscale=0.55, xscale=0.65]{
\coordinate (B1) at (-1,0);
\coordinate (B2) at (1,0);
\coordinate (C1) at (-1,1);
\coordinate (D1) at (-1,2);
\coordinate (C2) at (1,1);
\coordinate (D2) at (1,2);
\coordinate (T1) at (-1,3);
\coordinate (T2) at (1,3);
\draw[->] (B1) -- (C1) node[at start, below] {\tiny{$k+s$}};
\draw[->] (C1) -- (D1) node[midway, left   ] {\tiny{$k$}};
\draw[->] (D1) -- (T1) node[at end , above ] {\tiny{$k-r$}};
\draw[->] (B2) -- (C2) node[at start, below] {\tiny{$l-s$}};
\draw[->] (C2) -- (D2) node[midway, right] {\tiny{$l$}};
\draw[->] (D2) -- (T2) node[at end, above] {\tiny{$l+r$}};
\draw[<-] (D2) -- (D1) node[midway, above] {\tiny{$r$}};
\draw[->] (C1) -- (C2) node[midway, below] {\tiny{$s$}};
}}}}}
\newcommand{\webHe}{\ensuremath{\vcenter{\hbox{\tikz[yscale=0.55, xscale=0.65]{
\coordinate (B1) at (-1,0);
\coordinate (B2) at (1,0);
\coordinate (C1) at (-1,1.5);
\coordinate (D1) at (-1,1.5);
\coordinate (C2) at (1,1.5);
\coordinate (D2) at (1,1.5);
\coordinate (T1) at (-1,3);
\coordinate (T2) at (1,3);
\draw[->] (B1) -- (C1) node[at start, below] {\tiny{$k+s$}};
\draw[->] (D1) -- (T1) node[at end , above ] {\tiny{$k-r$}};
\draw[->] (B2) -- (C2) node[at start, below] {\tiny{$l-s$}};
\draw[->] (D2) -- (T2) node[at end, above] {\tiny{$l+r$}};
\draw[->] (C1) -- (C2) node[midway, below] {\tiny{$r+s$}};
}}}}}
\newcommand{\doubleYb}{\ensuremath{\vcenter{\hbox{\tikz[scale=0.4]{
\coordinate (B1) at (-1,0);
\coordinate (T1) at (-1,3);
\coordinate (C) at (0,1);
\coordinate (D) at (0,2);
\coordinate (B2) at (1,0);
\coordinate (T2) at (1,3);
\draw[->] (B1) -- (C) node[at start, below] {\tiny{$1$}};
\draw[<-] (C) -- (B2) node[at end, below] {\tiny{$m+l-1$}};
\draw[<-] (D) -- (C) node[midway, left] {\tiny{$l+m$}};
\draw[<-] (T2) -- (D) node[at start, above] {\tiny{$m$}};
\draw[->] (D) -- (T1) node[at end, above] {\tiny{$l$}};
}}}}}
\newcommand{\bigHb}{\ensuremath{\vcenter{\hbox{\tikz[scale=0.4]{
\coordinate (B1) at (-1,0);
\coordinate (T1) at (-1,3);
\coordinate (M1) at (-1,1.5);
\coordinate (M2) at (1,1.5);
\coordinate (B2) at (1,0);
\coordinate (T2) at (1,3);
\draw[->] (B1) -- (M1) node[at start, below] {\tiny{$1$}};
\draw[->] (B2) -- (M2) node[at start, below] {\tiny{$m+l-1$}};
\draw[->] (M2) -- (M1) node[midway, above] {\tiny{$l-1$}};
\draw[->] (M2) -- (T2) node[at end, above] {\tiny{$m$}};
\draw[->] (M1) -- (T1) node[at end, above] {\tiny{$l$}};
}}}}}
\begin{document}
\maketitle

\begin{abstract}
We present state sums for quantum link invariants arising from the representation theory of $U_q(\gll_{N|M})$. We investigate the case of the $N$th exterior power of the standard representation of $U_q(\gll_{N|1})$ and explicit the relation with Kashaev invariants.
\end{abstract}

\tableofcontents

\section*{Introduction}
\label{sec:introduction}

In its 1990 ICM paper \cite{MR1159255}, Turaev emphazided the proeminent role of state sum models in low dimensional topology. Models of this kind were and remain an important part of what is called quantum topology.  They are closely related to mathematical physics, quantum algebra and statistical mechanics. The most famous example of state sum model in low dimensional topology is probably the Kauffman bracket \cite{MR899057}. Not only demonstrates it how methods of statistical mechanics and ideas of quantum field theory can be relevant in this subject as  Witten explained \cite{MR990772}, but it is also the first step in a recent major development of quantum topology: categorification of quantum invariants. Indeed, Khovanov homology \cite{MR1740682} which categorifies the Jones polynomial is clearly inspired by the combinatorial definition of the Jones polynomial given by the Kauffman bracket.\\

In this paper, we give state sum models computing quantum invariants arising from the representation theory of the super quantum group $U_q(\gll_{N|M})$. We believe these state sums will be useful in the quest of categorifying these quantum invariants. Some of the invariants described by these state sums have a non-semi-simple nature. The prototypical example of such non-semi-simple invariants is the Alexander--Conway polynomial.\\

Let us discuss---with a slightly biased point of view---some of the state sums and categorifications of the Alexander--Conway polynomial. 
The first categorification of the Alexander provided by the knot Floer homology of Ozsv\'ath--Szab\'o and Rasmussen \cite{OS, Rasmussen} uses its interpretation as Reidemeister torsion. In a combinatorial version of Manolescu--Ozsv\'ath--Szab\'o--Thurston using grid diagrams, they used explicitely a determinantal description \cite[Appendix]{MR2372850}. They were many attempts to have a direct combinatorial approach using the state model developped by Kauffman in \cite{MR712133}, but finally Ozsv\'ath--Szab\'o made the connection in \cite{MR2574747} using a twisted version of the knot Floer homology.
In \cite{RW3}, the authors of this paper provided a combinatorial categorification of the Alexander--Conway polynomial of knots starting from a representation theoric interpretation (see \cite{EPV} for the connection between knot Floer homology and representation theory). The state sum model underlying this last categorification is one of the example of the present paper (see also \cite{Viro, MR3319619}).\\

The paper \cite{MR1659228} by Murakami--Ohtsuki--Yamada is  another prototypical example of a state sum model which found its full relevance and importance in the realm of the categorification process. They give an elementary description of Re\-she\-ti\-khin--Turaev invariant associated with exterior powers of the standard representation of $U_q(\gll_{N})$. The description of these quantum invariants by Murakami--Ohtsuki--Yamada proceeds in two steps. First, a link diagram is expressed as a formal linear combination of planar trivalent graphs (see also Kuperberg \cite{MR1403861}), then a positive state sum is provided for these graphs. Both of these steps were key ingredients for the categorification of $U_q(\gll_N)$-quantum invariants by Khovanov--Rozansky \cite{MR2391017} (see also  Mackaay--Sto{\v{s}}i\'c--Vaz \cite{MR2491657} and  Wu \cite{pre06302580}, Yonezawa \cite{yonezawa2011}, Mazorchuk--Stroppel\cite{zbMATH05656519}  and Sussan \cite{MR2710319} for colored versions).\\

State sum models should continue to be of importance and influencal in quantum topology. For instance, they play a direct role in another version of the categorification of the previous invariants by the authors in \cite{RW1}. A state sum model for closed 2-dimensional CW complexes called foams is given to produce a trivalent TQFT. This state sum model was reinvested by Khovanov and the first author \cite{KR1} to investigate the four color theorem.\\

The present paper is firstly concerned with state sum models for exterior powers of the standard representation of $U_q(\gll_{N|M})$. In particular the case $N=0$, corresponds to invariants associated with symmetric powers of the standard representation of $U_q(\gll_{M})$. These invariants have been categorified in \cite{RW2, queffelec2018annular, MR3709661}. As reproved in the present paper, the invariants only depend on $N-M$, providing in fact different state sum models for the same invariants. It is conjectured in \cite{GGS} that these various state sums could be categorified yielding potentially non-equivalent homological invariants.\\

Secondly, the paper investigates further the invariants associated with the $N$th exterior power of the standard representation of $U_q(\gll_{N|1})$ which have vanishing quantum dimensions. These invariants can be renormalized, see \cite{MR2640994, 2015arXiv150603329Q}. This paper provides a direct proof that they can be renormalized giving explicitly a state sum model. It also explains how these invariants are related to  Kashaev invariants \cite{MR1341338} (see  \cite{MR2468374} for a different proof of this latter fact). As ADO invariants \cite{MR1164114}, this family of non-semisimple invariants provides  interesting family of invariants generalizing the Alexander polynomial  containing  Kashaev invariants. The Alexander polynomial has been categorified in this framework \cite{RW3} and the present state sum models provide a first step in the categorification of this family of invariants.\\

\subsection*{Outline of the paper}
\begin{itemize}
\item Section~\ref{sec:qi} is devoted to some combinatorics of $q$-binomials used in the rest of the paper.
\item Section~\ref{sec:colorings-moy-graphs} gives the state sum models for planar graphs. 
\item Section~\ref{sec:link-invariants} extend the state sum models to link diagrams. 
\item Section~\ref{sec:non-semi-simple} investigates the non-semisimple case of the $N$-th exterior power of the standard representation of $U_q(\gll_{N|1})$ and makes the connection with the Kashaev invariants.
\item Appendix~\ref{sec:moy-graphs-an} provides the representation theoric background; in particular the explicit maps providing the corresponding Reshetikhin--Turaev functors.
\end{itemize}

\section{Some $q$-identities}
\label{sec:qi}
In this section, we give some useful identities on quantum integers and quantum binomials. See \cite{MR1865777} for a more detailed account. The first two lemmas and three next corollaries are obtained by direct computations or easy inductions and we omit their proofs. Unless otherwise specified, $q$ is a formal parameter.

\begin{dfn}
  \label{dfn:quantum-numbers}
  Let $n$ be an integer, define the \emph{quantum integer $[n]$} by the following formula:
\[[n]:=\frac{q^n -q^{-n}}{q-q^{-1}}=
\begin{cases}
  \sum_{i=1}^{n} q^{-1-n +2i} & \textrm{if $n> 0$,} \\
  0 & \textrm{if $n=0$ and} \\
  \sum_{i=-n}^{-1} -q^{1+n +2i} & \textrm{if $n<0$.}
\end{cases}
\]
If $k$ and $n$ are two integers, define the \emph{quantum binomial $
\begin{bmatrix}
  n \\ k
\end{bmatrix}
$} by the following formula:
\[
\begin{bmatrix}
  n \\ k
\end{bmatrix} =
\begin{cases}
  \frac{\prod_{i=0}^{k-1}[n-i]}{\prod_{i=1}^k[i]} & \textrm{if $k\geq 0$,} \\
  0 &\textrm{else.}
\end{cases}
\]
\end{dfn}

\begin{rmk}
  For all integers $k$ and $n$, one has:
  \[
[-n] = -[n] \qquad \textrm{and} \qquad
\begin{bmatrix}
  -n \\ k
\end{bmatrix}
 = (-1)^k
 \begin{bmatrix}
   n+k -1 \\ k
 \end{bmatrix}.
  \]
\end{rmk}

\begin{lem}
  \label{lem:add-integers}
  Let $n$ and $m$ be two integers, the following identity holds:
\[
[m+n] = q^{-n}[m] + q^{m}[n] = q^{n}[m] + q^{-m}[n].
\]
\end{lem}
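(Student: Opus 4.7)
The identity is a direct algebraic consequence of the closed form $[k]=(q^k-q^{-k})/(q-q^{-1})$ from Definition~\ref{dfn:quantum-numbers}, so my plan is to verify it by substitution and watch the cross terms cancel.

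First I would replace $[m]$ and $[n]$ on the right hand side of the first equality by their definitions and put everything over the common denominator $q-q^{-1}$. This yields
\[
q^{-n}[m] + q^{m}[n] \;=\; \frac{q^{-n}(q^m-q^{-m}) + q^{m}(q^n-q^{-n})}{q-q^{-1}} \;=\; \frac{q^{m-n} - q^{-m-n} + q^{m+n} - q^{m-n}}{q-q^{-1}}.
\]
The two $q^{m-n}$ terms cancel, leaving $(q^{m+n}-q^{-(m+n)})/(q-q^{-1})$, which is $[m+n]$ by definition.

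For the second equality, I would observe that $[m+n]=[n+m]$, so applying the first equality with the roles of $m$ and $n$ swapped gives $[m+n]=q^{-m}[n]+q^{n}[m]$, which is the desired expression. Alternatively, one can redo the same one-line computation with the opposite sign convention in the exponents; either route is immediate.

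There is no real obstacle here: the statement is a two-line manipulation once the definition is unfolded. The only thing to be mindful of is that the identity should hold for \emph{all} integers $m,n$, including negative ones, but since the closed form $[k]=(q^k-q^{-k})/(q-q^{-1})$ is valid for every integer $k$ (as recorded in the remark right after Definition~\ref{dfn:quantum-numbers}, which gives $[-n]=-[n]$), the same computation covers every case uniformly.
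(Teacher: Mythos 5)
Your proof is correct and is exactly the direct computation the paper has in mind: the authors explicitly omit the proof of this lemma as a ``direct computation,'' and unfolding the closed form $[k]=(q^k-q^{-k})/(q-q^{-1})$ with the cancellation of the $q^{m-n}$ cross terms is that computation. The symmetry argument for the second equality is also fine.
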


\begin{cor}
  \label{cor:sum-integers}
  Let $k$ be an integer and $(a_h)_{1\leq h \leq k}$ be a collection of $k$ integers, then the following identity holds:
\[
\left[\sum_{h=1}^k a_h\right]=
\sum_{h=1}^k q^{\sum_{i=1}^{h-1} a_i - \sum_{i=h+1}^k a_i} [a_h] =
\sum_{h=1}^k q^{-\sum_{i=1}^{h-1} a_i + \sum_{i=h+1}^k a_i} [a_h].
\]
\end{cor}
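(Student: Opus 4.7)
The plan is to prove Corollary~\ref{cor:sum-integers} by induction on $k$, using Lemma~\ref{lem:add-integers} as the only nontrivial input; alternatively, a one-line telescoping from the definition of $[n]$ works directly.

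For the inductive approach, the base cases $k=1$ (both sides collapse to $[a_1]$, since the empty sums vanish) and $k=2$ (which is exactly the two forms of Lemma~\ref{lem:add-integers}) are immediate. For the inductive step, I would write $\sum_{h=1}^k a_h = \bigl(\sum_{h=1}^{k-1} a_h\bigr) + a_k$ and apply Lemma~\ref{lem:add-integers} with $m = \sum_{h=1}^{k-1} a_h$ and $n = a_k$:
\[
\left[\sum_{h=1}^k a_h\right] = q^{-a_k}\left[\sum_{h=1}^{k-1} a_h\right] + q^{\sum_{h=1}^{k-1}a_h}\,[a_k].
\]
Applying the induction hypothesis to the bracket on the right and distributing $q^{-a_k}$ turns the exponent $-\sum_{i=h+1}^{k-1} a_i$ into $-\sum_{i=h+1}^{k} a_i$ for each $h \le k-1$, which is exactly the first $k-1$ summands of the claimed identity. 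The remaining term $q^{\sum_{h=1}^{k-1}a_h}[a_k]$ is precisely the $h=k$ summand, the sum $\sum_{i=k+1}^{k} a_i$ being empty. The second equality follows by running the same argument with the second form of Lemma~\ref{lem:add-integers}, or for free by noting that the whole statement is invariant under $q\mapsto q^{-1}$ (which fixes each $[a_h]$).

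If one prefers a non-inductive route, set $S_h = \sum_{i=1}^{h} a_i$ with $S_0 = 0$ and $S = S_k$, and expand $[a_h] = (q^{a_h}-q^{-a_h})/(q-q^{-1})$. A small bookkeeping check shows that the $h$-th term of the first sum on the right is
\[
\frac{q^{2S_h - S} - q^{2S_{h-1} - S}}{q - q^{-1}},
\]
so the sum telescopes to $(q^S - q^{-S})/(q-q^{-1}) = \bigl[\sum_{h=1}^k a_h\bigr]$.

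There is no serious conceptual obstacle here; the only care needed is in tracking the exponents, for which the partial-sum notation $S_h$ is the cleanest bookkeeping device.
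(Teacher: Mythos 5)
Your proof is correct, and it matches the paper's intent exactly: the paper explicitly omits the proof of this corollary, stating that it follows by "direct computations or easy inductions," and your induction on $k$ via Lemma~\ref{lem:add-integers} (with the $q\mapsto q^{-1}$ symmetry handling the second equality) is precisely that argument. The telescoping variant is a nice bonus but not needed.
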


\begin{cor}
  \label{lem:diff-prod}
  Let $k$, $m$ and $n$ be three integers, then the following identity holds:
\[
[m+k][n] -[m][n+k] = [k][n -m].
\]
\end{cor}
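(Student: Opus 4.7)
The cleanest route is to apply Lemma~\ref{lem:add-integers} to rewrite both $[m+k]$ and $[n+k]$ so that a common factor $[k]$ can be extracted, and then apply Lemma~\ref{lem:add-integers} a third time to identify the remaining bracket as $[n-m]$. No induction or case analysis on signs should be needed, since Lemma~\ref{lem:add-integers} has been stated for arbitrary integers.

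Concretely, I would first write
\[
[m+k] = q^{-k}[m] + q^{m}[k] \qquad\text{and}\qquad [n+k] = q^{-k}[n] + q^{n}[k]
\]
using the first form of Lemma~\ref{lem:add-integers}. Multiplying the first identity by $[n]$, the second by $[m]$, and subtracting, the $q^{-k}[m][n]$ terms cancel, leaving
\[
[m+k][n]-[m][n+k] \;=\; [k]\bigl(q^{m}[n]-q^{n}[m]\bigr).
\]

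The only thing left is therefore to show $q^{m}[n]-q^{n}[m]=[n-m]$. For this I would write $n=m+(n-m)$ and apply the second form of Lemma~\ref{lem:add-integers}, namely $[n]=q^{n-m}[m]+q^{-m}[n-m]$. Multiplying by $q^{m}$ gives $q^{m}[n]=q^{n}[m]+[n-m]$, which rearranges to the required identity. Substituting back yields $[m+k][n]-[m][n+k]=[k][n-m]$, as claimed.

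The main (mild) obstacle is just picking which of the two forms of Lemma~\ref{lem:add-integers} to use at each step so that the unwanted cross terms cancel; once the right splitting $n=m+(n-m)$ is chosen in the last step, the computation is automatic and, importantly, valid for \emph{all} integers $m,n,k$ (positive, negative, or zero) since Lemma~\ref{lem:add-integers} makes no sign hypothesis. As a sanity check, I would also verify the identity by directly expanding both sides using $[j]=(q^{j}-q^{-j})/(q-q^{-1})$, where one sees the telescoping $q^{m+k+n}$ and $q^{-m-k-n}$ terms cancel between $[m+k][n]$ and $[m][n+k]$, leaving exactly $(q^{k}-q^{-k})(q^{n-m}-q^{m-n})/(q-q^{-1})^{2}=[k][n-m]$.
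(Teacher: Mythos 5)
Your proof is correct: the two applications of Lemma~\ref{lem:add-integers} to extract the common factor $[k]$, followed by the splitting $n=m+(n-m)$ to identify $q^{m}[n]-q^{n}[m]=[n-m]$, are all valid for arbitrary integers. The paper omits the proof of this corollary, stating only that it follows by direct computation, and your argument is precisely such a direct computation, so there is nothing to reconcile.
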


\begin{lem}
  \label{lem:pascal}
  Let $n$ and $k$ be two integers, then the following identity holds:
  \begin{align*}
  \begin{bmatrix}
    n \\k
  \end{bmatrix} =
  q^k
  \begin{bmatrix}
    n - 1  \\ k
  \end{bmatrix} +
  q^{k-n}
  \begin{bmatrix}
    n - 1  \\ k -1
  \end{bmatrix}.
\end{align*}
\end{lem}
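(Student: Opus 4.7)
The plan is to reduce the identity to a single quantum-integer identity that follows directly from Lemma~\ref{lem:add-integers}.

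First I would dispose of the boundary cases. If $k<0$, all three terms vanish by definition. If $k=0$, the left-hand side equals $1$, while the right-hand side equals $q^0\cdot 1 + q^{-n}\cdot 0 = 1$. So we may assume $k\geq 1$.

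For $k\geq 1$, I would simply expand the two quantum binomials appearing on the right-hand side using Definition~\ref{dfn:quantum-numbers}:
\[
q^k\begin{bmatrix} n-1 \\ k\end{bmatrix} = q^k\,\frac{[n-1][n-2]\cdots[n-k]}{[1][2]\cdots[k]},\qquad
q^{k-n}\begin{bmatrix} n-1 \\ k-1\end{bmatrix} = q^{k-n}\,\frac{[n-1][n-2]\cdots[n-k+1]}{[1][2]\cdots[k-1]}.
\]
Factoring out the common factor $[n-1][n-2]\cdots[n-k+1]/\bigl([1][2]\cdots[k]\bigr)$ from the sum, the identity to prove collapses to
\[
q^k[n-k] + q^{k-n}[k] = [n].
\]

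This last equation is precisely the second form of Lemma~\ref{lem:add-integers}, applied with the roles $m\leftarrow n-k$ and $n\leftarrow k$: indeed that lemma reads $[m+n] = q^{n}[m] + q^{-m}[n]$, which here becomes $[n] = q^{k}[n-k] + q^{-(n-k)}[k] = q^k[n-k] + q^{k-n}[k]$. Multiplying back the common factor yields $\begin{bmatrix} n \\ k\end{bmatrix}$, finishing the proof. There is no real obstacle here; the only point worth noting is choosing the correct form of Lemma~\ref{lem:add-integers} (the one where the exponents match the prefactors $q^k$ and $q^{k-n}$).
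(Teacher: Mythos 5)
Your proof is correct: the reduction to $q^k[n-k]+q^{k-n}[k]=[n]$ via the second form of Lemma~\ref{lem:add-integers} is exactly the kind of direct computation the paper has in mind (it omits the proof, stating that this lemma follows from ``direct computations or easy inductions''), and your handling of the boundary cases $k<0$ and $k=0$ is also fine.
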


\begin{cor}\label{cor:anti-pascal}
  Let $n$ and $k$ be two integers with $k\geq 0$, then the following identity holds:
  \begin{align*}
  \begin{bmatrix}
    n \\k
  \end{bmatrix} =
  \sum_{i=0}^{k}(-1)^{k-i}q^{(i-k)(n-1) - k}
  \begin{bmatrix}
    n +1 \\i
  \end{bmatrix}.
\end{align*}
\end{cor}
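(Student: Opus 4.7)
The plan is to prove the corollary by induction on $k\geq 0$, with Lemma~\ref{lem:pascal} as the main input.

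For the base case $k=0$, both sides equal $1$: the definition gives $\qbina{n}{0}=1$, while the right-hand side reduces to the single term $(-1)^{0}q^{0}\qbina{n+1}{0}=1$.

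For the inductive step, I would rewrite Lemma~\ref{lem:pascal} with $n$ shifted to $n+1$, namely
\[
\qbina{n+1}{k} \;=\; q^{k}\qbina{n}{k} \;+\; q^{k-n-1}\qbina{n}{k-1},
\]
and solve for $\qbina{n}{k}$, obtaining the recursion
\[
\qbina{n}{k} \;=\; q^{-k}\qbina{n+1}{k} \;-\; q^{-n-1}\qbina{n}{k-1}.
\]
I would then substitute the inductive hypothesis for $\qbina{n}{k-1}$ into this recursion, which expresses $\qbina{n}{k}$ as a linear combination of $\qbina{n+1}{i}$ for $0\le i\le k$. The term with $i=k$ comes directly from $q^{-k}\qbina{n+1}{k}$, while for each $0\le i<k$ the coefficient is the product of $-q^{-n-1}$ with the coefficient $(-1)^{k-1-i}q^{(i-k+1)(n-1)-(k-1)}$ supplied by the inductive hypothesis at index $i$.

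The main technical obstacle, and essentially the only substantive step, is verifying that these collected coefficients collapse to the claimed form $(-1)^{k-i}q^{(i-k)(n-1)-k}$. This is a routine but careful bookkeeping of $q$-exponents, and no further ingredients beyond Lemma~\ref{lem:pascal} are required. An alternative that one might pursue if the exponent manipulation proves awkward is to regard the claim as an explicit inversion of the triangular change-of-basis $\qbina{n+1}{k}=q^{k}\qbina{n}{k}+q^{k-n-1}\qbina{n}{k-1}$ in the $\ZZ[q,q^{-1}]$-module of binomial sequences, in which case solving the triangular system directly yields the same formula.
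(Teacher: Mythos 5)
Your strategy---rewriting Lemma~\ref{lem:pascal} with $n$ shifted to $n+1$, solving for $\qbina{n}{k}$, and inducting on $k$---is exactly the natural route, and the paper offers nothing to compare against since it explicitly omits the proof as an ``easy induction.'' The problem is that the single step you defer as ``routine bookkeeping'' is where the argument fails, and it fails because the identity as printed cannot be proved: it is false. Carrying out your substitution, the coefficient of $\qbina{n+1}{i}$ for $i<k$ is
\[
-q^{-n-1}\cdot(-1)^{k-1-i}q^{(i-k+1)(n-1)-(k-1)}
=(-1)^{k-i}q^{(i-k)(n-1)-k-1},
\]
which misses the claimed exponent $(i-k)(n-1)-k$ by a factor of $q^{-1}$ (the $i=k$ term and the base case $k=0$ are fine, which is why the error is easy to overlook). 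These discrepancies accumulate over the induction, and the coefficient your recursion actually produces is $(-1)^{k-i}q^{(i-k)(n+1)-i}$. A direct check confirms the printed statement fails already at $n=k=1$: the left-hand side is $\qbina{1}{1}=1$, while the right-hand side is $-q^{-1}+q^{-1}[2]=1+q^{-2}-q^{-1}$.

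So your induction, completed honestly, proves the corrected identity
\[
\qbina{n}{k}=\sum_{i=0}^{k}(-1)^{k-i}q^{(i-k)(n+1)-i}\qbina{n+1}{i},
\]
but not the statement as written. As submitted, the proposal has a genuine gap: you assert without verification that the collected coefficients collapse to the printed form, and they do not. The fix is to carry out the exponent computation explicitly, state the corollary with the corrected exponent, and flag that the printed exponent (which is also the one invoked in the negative-$n_1$ step of the proof of Proposition~\ref{prop:dec-binomial}) contains a typo.
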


A version of the next proposition appears in \cite[page 23]{MR1865777} with a different proof only valid for $n\geq 0$ and $m\geq 0$. Hence, we provide a complete proof.

\begin{prop}\label{prop:dec-binomial}
  Let $n_1$, $n_2$ and $k$ be three integers. The following identity holds:
  \begin{align*}
    \begin{bmatrix}
      n_1 + n_2 \\k
    \end{bmatrix}
    = \sum_{\substack{k_1 + k_2 = k \\ k_1, k_2 \geq 0}}
q^{n_1k_2 - n_2k_1}
    \begin{bmatrix}
      n_1 \\ k_1
    \end{bmatrix}
    \begin{bmatrix}
      n_2 \\ k_2
    \end{bmatrix}.
  \end{align*}
\end{prop}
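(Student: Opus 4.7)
The plan is to establish the identity first for $n_1 \geq 0$ by induction on $n_1$, and then to extend it to all integers via a Laurent polynomial argument. Since both sides vanish for $k<0$, we may assume $k\geq 0$.

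For the base case $n_1=0$, the binomial $\begin{bmatrix}0\\k_1\end{bmatrix}$ vanishes for all $k_1\geq 1$ (because $[0]=0$ appears in the numerator), so only the term $(k_1,k_2)=(0,k)$ survives on the right and gives $\begin{bmatrix}n_2\\k\end{bmatrix}$, matching the left hand side.

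For the inductive step, assume the formula for $n_1-1$ and all integers $n_2,k$, and apply Lemma~\ref{lem:pascal} on both sides in parallel. On the left, write
\[
\begin{bmatrix}n_1+n_2\\k\end{bmatrix}=q^k\begin{bmatrix}(n_1-1)+n_2\\k\end{bmatrix}+q^{k-n_1-n_2}\begin{bmatrix}(n_1-1)+n_2\\k-1\end{bmatrix}
\]
and substitute the induction hypothesis into both summands, obtaining a sum of products $\begin{bmatrix}n_1-1\\j_1\end{bmatrix}\begin{bmatrix}n_2\\j_2\end{bmatrix}$ indexed by $j_1+j_2\in\{k-1,k\}$. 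On the right, apply Lemma~\ref{lem:pascal} to each factor $\begin{bmatrix}n_1\\k_1\end{bmatrix}$ and reindex $i_1=k_1-1$ in the second of the resulting sums. A direct comparison of $q$-exponents, using $k_1+k_2=k$, shows the two expressions agree term by term.

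For the extension to negative arguments, fix $k\geq 0$ and $n_2\in\ZZ$ and observe that each $\begin{bmatrix}n\\k\end{bmatrix}$ is a Laurent polynomial in $q^n$ over $\QQ(q)$ with exponents in $[-k,k]$. Hence both sides of the identity are Laurent polynomials in $x=q^{n_1}$ of bounded exponent range; on the right, the monomial prefactor $q^{n_1k_2}$ combined with $\begin{bmatrix}n_1\\k_1\end{bmatrix}$ contributes exponents in $[k_2-k_1,\,k_2+k_1]\subseteq[-k,k]$. Having shown agreement for all $n_1\geq 0$, which provides infinitely many pairwise distinct values $x=q^{n_1}$ in $\QQ(q)$, the two Laurent polynomials must coincide identically, so the identity holds for every $n_1\in\ZZ$. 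The same argument in the other variable extends it to every $n_2\in\ZZ$. The main obstacle is the $q$-exponent bookkeeping in the induction step, where two independent applications of Pascal must produce matching monomials; once this is verified, the polynomial argument handles the extension for free.
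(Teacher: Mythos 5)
Your proof is correct, and the positive-$n_1$ induction is essentially the paper's own argument (the paper goes from $n_1$ to $n_1+1$ by applying Lemma~\ref{lem:pascal} to $\begin{bmatrix}n_1+1\\k_1\end{bmatrix}$ inside the sum; your "parallel Pascal" step is the same computation). Where you genuinely diverge is the extension to $n_1<0$: the paper runs a second, downward induction, expanding $\begin{bmatrix}n_1\\k_1\end{bmatrix}$ via Corollary~\ref{cor:anti-pascal} and reassembling, which costs another page of exponent bookkeeping. You instead fix $n_2$ and $k$, observe that $[n-i]=\frac{q^{-i}x-q^{i}x^{-1}}{q-q^{-1}}$ with $x=q^{n}$ makes every quantum binomial a Laurent polynomial in $q^{n}$ over $\QQ(q)$, and conclude by rigidity: two Laurent polynomials over an integral domain agreeing at the infinitely many distinct points $x=q^{n_1}$, $n_1\geq 0$, coincide. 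This is a standard and clean polynomiality trick; it buys you the negative case for free and avoids needing Corollary~\ref{cor:anti-pascal} at all, at the cost of a slightly less self-contained, less "purely combinatorial" flavor. (Two small remarks: the bound on the exponent range $[-k,k]$ is not actually needed for the rigidity argument, only finiteness of the Laurent expansion; and since your base case and inductive step already hold for arbitrary $n_2\in\ZZ$, the closing sentence about repeating the argument in the second variable is redundant.)
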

\begin{proof}
  This statement is proved by induction on $n_1$. If $n_1 =0$  this is obvious. One needs to consider the case $n_1\geq 0$ and the case $n_1 \leq 0$.

  Suppose that $n_1\geq 0$ and that the statement holds for $n_1$. We use Lemma~\ref{lem:pascal} to compute:
  \begin{align*}
    &\sum_{\substack{k_1 + k_2 = k \\ k_1, k_2 \geq 0}}
q^{(n_1+1)k_2 - n_2k_1}
    \begin{bmatrix}
      n_1 +1 \\ k_1
    \end{bmatrix}
    \begin{bmatrix}
      n_2 \\ k_2
    \end{bmatrix}   \\
    &\qquad=
    \sum_{\substack{k_1 + k_2 = k \\ k_1, k_2 \geq 0}}q^{(n_1+1)k_2 - n_2k_1}
    \left(q^{k_1}
    \begin{bmatrix}
      n_1  \\ k_1
    \end{bmatrix}
    +q^{k_1 - (n_1 +1)}
    \begin{bmatrix}
      n_1  \\ k_1 -1
    \end{bmatrix}
    \right)
    \begin{bmatrix}
      n_2 \\ k_2
    \end{bmatrix}   \\
        & \qquad =
    q^{k}\sum_{\substack{k_1 + k_2 = k \\ k_1, k_2 \geq 0}}q^{n_1k_2- n_2k_1}
    \begin{bmatrix}
      n_1  \\ k_1
    \end{bmatrix}
    \begin{bmatrix}
      n_2 \\ k_2
    \end{bmatrix}
    \\ & \hspace{2cm}+ 
     q^{k-n_1-n_2-1}\sum_{\substack{k_1 + k_2 = k \\ k_1, k_2 \geq 0}}q^{n_1k_2 - n_2(k_1-1)}
    \begin{bmatrix}
      n_1  \\ k_1 -1
    \end{bmatrix}
    \begin{bmatrix}
      n_2 \\ k_2
    \end{bmatrix}   \\
    &\qquad =
    q^k
    \begin{bmatrix}
      n_1 + n_2 \\k
    \end{bmatrix}
    +
    q^{k-(n_1+n_2+1)}
    \begin{bmatrix}
      n_1 + n_2 \\ k-1
    \end{bmatrix} \\
    &\qquad =
    \begin{bmatrix}
      n_1 + n_2 +1 \\k
    \end{bmatrix}.
  \end{align*}
  Hence the statement holds for $n_1+1$, and therefore for all $n_1 \in \ZZ_\geq0$.

  Suppose that $n_1 < 0$ and that the statement holds for $n_1+1$. We use Corollary~\ref{cor:anti-pascal} to compute: 
  \begin{align*}
    &\sum_{\substack{k_1 + k_2 = k \\ k_1, k_2 \geq 0}}
    q^{n_1k_2 - n_2k_1}
    \begin{bmatrix}
      n_1  \\ k_1
    \end{bmatrix}
    \begin{bmatrix}
      n_2 \\ k_2
    \end{bmatrix}   \\
    &\qquad=
    \sum_{\substack{k_1 + k_2 = k \\ k_1, k_2 \geq 0}}
    q^{n_1k_2 - n_2k_1}
  \sum_{i=1}^{k_1}(-1)^{k_1-i}q^{(i-k_1)(n_1-1) - k_1}
    \begin{bmatrix}
      n_1 +1 \\ i
    \end{bmatrix}
    \begin{bmatrix}
      n_2 \\ k_2
    \end{bmatrix}   \\
   &\qquad= \sum_{j=0}^k\sum_{\substack{j_1 + j_2 = j \\ j_1, j_2 \geq 0}}
    (-1)^{k-j_1 -j_2}q^{(n_1j_2- n_2k + n_2j_2) + (j-k)(n_1-1) - k +j_2}
    \begin{bmatrix}
      n_1 +1 \\ j_1
    \end{bmatrix}
    \begin{bmatrix}
      n_2 \\ j_2
    \end{bmatrix}   \\
   &\qquad= \sum_{j=0}^k (-1)^{k-j} q^{(j-k)(n-1) -k}\sum_{\substack{j_1 + j_2 = j \\ j_1, j_2 \geq 0}}
    q^{(n_1+1)j_2- n_2j_1}
    \begin{bmatrix}
      n_1 +1 \\ j_1
    \end{bmatrix}
    \begin{bmatrix}
      n_2 \\ j_2
    \end{bmatrix}   \\
    &\qquad= \sum_{j=0}^k (-1)^{k-j} q^{(j-k)(n_1 + n_2 -1) -k}
    \begin{bmatrix}
      n_1 + n_2 +1 \\j 
    \end{bmatrix}\\
    & \qquad =
    \begin{bmatrix}
      n_1 + n_2 \\k
    \end{bmatrix}.
  \end{align*}
  Hence, the statement holds for $n_1$ and therefore for all negative integers.
\end{proof}

\begin{dfn}
  Let $X$ be a set.
  A \emph{multi-subset  of $X$} is an application $Y: X\to \ZZ_{\geq 0}$. If $\sum_{x\in X}Y(x)< \infty$, the multi-subset $Y$ is said to be \emph{finite} and the sum is its \emph{cardinal} (denoted by $\#Y$). If $x$ is an element of $X$, the number $Y(x)$ is the \emph{multiplicity of $x$ in $Y$}. 
An element $x$ of $X$ is \emph{in} $Y$, if its multiplicity in $Y$ is greater than or equal to $1$ (we write $x \in Y$). Let $Y_1$ and $Y_2$ be two multi-subsets of a set $X$.
\begin{itemize}
\item The \emph{disjoint union} of $Y_1$ and $Y_2$ (denoted $Y_1\sqcup
  Y_2$) is the multi-subset $Y_1+Y_2$.
\item The \emph{union} of $Y_1$ and
  $Y_2$ (denoted $Y_1\cup Y_2$) is the multi-subset $\max (Y_1,Y_2)$.
\item The \emph{intersection} of $Y_1$ and $Y_2$ (denoted $Y_1\cap Y_2$)
  is the multi-subset $\min (Y_1,Y_2)$.
\end{itemize}
\end{dfn}

\begin{rmk}
  A subset $Y$ of $X$ can be thought of as a multi-subset of $X$ by identifying it with its characteristic function. 
\end{rmk}

\begin{notation}
  Let $X$ be a set. The power set of $X$ is denoted by $\ps{X}$. The set of finite multi-subsets of $X$ is denoted by $\mps{X}$. For any non-negative integer $k$, the set of subsets (\resp{}multi-subsets) of $X$ of cardinal $k$ is denoted by $\ps[k]{X}$ (\resp$\mps[k]{X}$).
\end{notation}

\begin{dfn}
  Let $X$ be an ordered set and $Y$ a multi-subset of $X$. The \emph{degree of $Y$ in $X$} is the integer $\deg[X]{Y}$ given by the following formula:
  \begin{align*}
    \deg[X]{Y}:= \sum_{\substack{x<y \\x\in X\\y \in Y \\ x \notin Y}}Y(y) - 
                  \sum_{\substack{y<x \\x\in X\\y \in Y \\ x \notin Y}}Y(y). 
  \end{align*}
\end{dfn}

\begin{notation}
For $N$ a non-negative integer, denote by $\set{N}$ the ordered set of integers between $1$ and $N$.
\end{notation}

A version of the next proposition can be found in \cite{MR1865777} (Theorem 6.1  on page 19).

\begin{prop}
  Let $N$ and $k$ be two non-negative integers. The following identities hold:
  \begin{align*}
    \sum_{Y \in \ps[k]{\set{N}}} q^{\deg[\set{N}]{Y}} &=
    \begin{bmatrix}
      N \\k
    \end{bmatrix}
    \quad \textrm{and} \\
    (-1)^k
    \sum_{Y \in \mps[k]{\set{N}}} q^{\deg[\set{N}]{Y}} &=
    \begin{bmatrix}
      -N \\k
    \end{bmatrix}.
  \end{align*}
\end{prop}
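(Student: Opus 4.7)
The plan is to prove both identities by induction on $N$, with the subset case being the cleaner step, and to deduce the multi-subset case from the subset case via a natural bijection.

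For the subset identity, I would fix $N\geq 1$ and $k$, and split $\ps[k]{\set{N}}$ according to whether $N\in Y$. If $N\notin Y$, then $Y\in\ps[k]{\set{N-1}}$, and a direct inspection of the definition of $\deg$ shows $\deg[\set{N}]{Y}=\deg[\set{N-1}]{Y}-k$ (each $y\in Y$ picks up one new term in the second sum, since $N\in\set{N}\setminus Y$ is greater than each of them). If $N\in Y$, write $Y=Y'\sqcup\{N\}$ with $Y'\in\ps[k-1]{\set{N-1}}$; the contributions from $y\in Y'$ are unchanged (since $N$, being in $Y$, is never counted as an $x$), while the pairs $(x,N)$ for $x\in\set{N-1}\setminus Y'$ contribute $N-k$ to the first sum, giving $\deg[\set{N}]{Y}=\deg[\set{N-1}]{Y'}+(N-k)$.

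Combining these two contributions and applying the inductive hypothesis yields
\[
\sum_{Y\in\ps[k]{\set{N}}}q^{\deg[\set{N}]{Y}}=q^{-k}\begin{bmatrix}N-1\\k\end{bmatrix}+q^{N-k}\begin{bmatrix}N-1\\k-1\end{bmatrix},
\]
which equals $\begin{bmatrix}N\\k\end{bmatrix}$ by Lemma~\ref{lem:pascal} after substituting $q\to q^{-1}$ (valid since $q$-binomials are invariant under this substitution).

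For the multi-subset identity, I would use the classical bijection $\phi\co\mps[k]{\set{N}}\to\ps[k]{\set{N+k-1}}$ sending $Y=\{y_{1}\leq\cdots\leq y_{k}\}$ to $\phi(Y)=\{y_{1},y_{2}+1,\ldots,y_{k}+k-1\}$. The key point is to verify that $\phi$ is degree-preserving: $\deg[\set{N}]{Y}=\deg[\set{N+k-1}]{\phi(Y)}$. Once this is in hand, the already-proven subset identity applied on $\set{N+k-1}$, combined with the formula $\begin{bmatrix}-N\\k\end{bmatrix}=(-1)^{k}\begin{bmatrix}N+k-1\\k\end{bmatrix}$ from the remark after Definition~\ref{dfn:quantum-numbers}, delivers the second formula.

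The hard part will be the degree-preservation claim for $\phi$: a careful bookkeeping is needed because the shifts $y_{i}\mapsto y_{i}+(i-1)$ simultaneously displace some of the $y_i$'s, enlarge the ambient ordered set, and alter which elements lie in $X\setminus Y$. An alternative and more hands-on route is to induct directly on $N$ in the multi-subset case as well, splitting according to the multiplicity $j=Y(N)\in\{0,\ldots,k\}$: one shows $\deg[\set{N}]{Y}=\deg[\set{N-1}]{Y|_{\set{N-1}}}+jN-k$ and then verifies the resulting recursion against $\begin{bmatrix}N+k-1\\k\end{bmatrix}$ by iterated applications of Lemma~\ref{lem:pascal}, or alternatively by specialising Proposition~\ref{prop:dec-binomial}.
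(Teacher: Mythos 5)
Your proof of the first identity is complete and coincides with the paper's: the paper's one-line proof asks you to check that the left-hand side satisfies the recursion of Lemma~\ref{lem:pascal}, and your split according to whether $N\in Y$, with degree shifts $-k$ and $N-k$, is exactly that verification (the $q\leftrightarrow q^{-1}$ discrepancy is harmless, as you note). For the second identity the paper instead verifies the alternating-sign recursion of Corollary~\ref{cor:anti-pascal}, so both of your routes are genuinely different. Both work. Your positive recursion $\deg[\set{N}]{Y}=\deg[\set{N-1}]{Y|_{\set{N-1}}}+jN-k$ with $j=Y(N)$ gives $F_{N,k}=\sum_{j=0}^{k}q^{jN-k}F_{N-1,k-j}$, which is Proposition~\ref{prop:dec-binomial} with $n_1=-1$, $n_2=-(N-1)$ after multiplying through by $(-1)^k$; this is arguably cleaner than the paper's signed recursion. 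The step you flag as hard is in fact short once you record that the degree of a weakly increasing sequence $y_1\leq\dots\leq y_k$ in $\set{N}$ equals $\sum_{i=1}^{k}(2y_i-N-1)$: the shift $y_i\mapsto y_i+i-1$ inside $\set{N+k-1}$ changes the $i$-th summand by $2(i-1)-(k-1)$, and $\sum_{i=1}^{k}\bigl(2(i-1)-(k-1)\bigr)=0$, so $\phi$ preserves the degree.

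There is, however, one point you must settle before either route closes: the formula $\deg[\set{N}]{Y}=\sum_i(2y_i-N-1)$ — and with it both the degree-preservation of $\phi$, your recursion, and indeed the second identity itself — holds only if the degree of a multi-subset is read as $\sum_{x<y}\bigl(Y(y)-Y(x)\bigr)$ over \emph{all} pairs $x<y$ in $X$, which is how the statistic actually arises in Lemma~\ref{lem:circles} as $\sum_{i<j}\rho(\Gamma^S_j)-\rho(\Gamma^S_i)$. If one keeps the condition $x\notin Y$ from the definition of the degree literally, the two readings agree on honest subsets (so your first argument is unaffected) but not on multi-subsets: for $N=2$, $k=3$ the multi-subset $\{1,1,2\}$ then has degree $0$ rather than $-1$, its image $\phi(\{1,1,2\})=\{1,2,4\}\in\ps[3]{\set{4}}$ has degree $-1$, and the resulting generating function $q^{-3}+2+q^{3}$ is not $\qbinil{4}{3}=[4]$. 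So your instinct that the bookkeeping for $\phi$ is delicate is pointing at a genuine issue with the stated definition rather than with your bijection; once the degree is read as above, everything you outline goes through.
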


\begin{proof}
  This is an easy induction. The statement is clear when $N=0$, and one can prove that the left-hand sides of these two identities satisfy the induction formulas given in Lemma~\ref{lem:pascal} and Corollary~\ref{cor:anti-pascal}.
\end{proof}

\section{Colorings of MOY graphs}
\label{sec:colorings-moy-graphs}
The aim of this section is to present a state sum à la Murakami--Ohtsuki--Yamada \cite{MR1659228} computing the Reshetikhin--Turaev invariants of links associated with the exterior powers of the standard representation of $U_q(\gll_{N|M})$. These invariants are denoted $P_{N|M}$. As in \cite{MR1659228}, this state sum is a combinatorial interpretation of the underlying representation theory. Details about the algebraic definition is provided in Appendix~\ref{sec:moy-graphs-an}. Note however that consistency of the MOY calculus presented here follows from its definition as a state sum and does not rely on the algebraic interpretation.
The $\sll_N$-invariants associated with exterior powers correspond to setting $M=0$.

Symmetric powers are covered by this state sum by essentially swaping $M$ and $N$. Indeed, 
the Hopf algebra $U_q(\gll_{M|N})$ is isomorphic to $U_{q}(\gll_{N|M})$ (see Remark~\ref{rmk:swapMN}). The standard representation $\CC^{M|N}_q$ of the first is isomorphic to the standard representation $\CC^{N|M}_{q}$ tensored with a trivial representation of odd super degree denoted $\boldsymbol{1}_{\mathrm{odd}}$. It induces isomorphisms between $\Sym_{q}^k\CC^{M|N}_q$ and $\Lambda^k\left(_{q} \CC^{N|M}_{q}\otimes \boldsymbol{1}_{\mathrm{odd}}\right)$. The later being isomorphic to
\begin{align*}
  \begin{cases}
    \Lambda^k_{q}\left(\CC^{N|M}_{q}\right) & \text{$k$ even,} \\[5pt]
    \Lambda^k_{q}\left(\CC^{N|M}_{q}\right) \otimes \boldsymbol{1}_{\mathrm{odd}} & \text{$k$ odd.}
  \end{cases}
\end{align*}

Since the $U_{q}(\gll_{N|M})$-invariant of a link with $\ell$ components associated with $\boldsymbol{1}_{\mathrm{odd}}$ is equal to $(-1)^\ell$, we obtain that if $L$ is an oriented link with $\ell$ components labeled by positive integers $k_1, \dots, k_\ell$, interpreted as exponents of symmetric powers, then its $U_q(\gll_{M|N})$-invariant is equal to \[(-1)^{\sum_ik_i}P_{N|M}(\overline{L}),\]
where $\overline{L}$ denote the mirror image of $L$ and the integers $k_1, \dots, k_\ell$ are now interpreted as exponents exterior powers.
The mirror image appears because the isomorphism between $U_q(\gll_{N|M})$ and $U_{q}(\gll_{M|N})$ maps the universal $R$-matrix of $U_q(\gll_{N|M})$ onto the inverse of that of $U_q(\gll_{M|N})$.

The most famous invariant associated with symmetric powers is the colored Jones polynomial of links. For a link $L$ with $\ell$ components, denote $J_n(L)$ the un-normalized\footnote{This means that the value of the unknot is $[n+1]$.} colored Jones polynomial associated with the $(n+1)$-dimensional irreducible representation of $U_q(\sll_2)$. The discussion above gives
\[P_{0|2}(L_{n, \dots,n}) = (-1)^{n\ell}J_n(\overline{L}) = J_n(L, -q^{-1}),\]
where $L_{n, \dots,n}$ means that every component of $L$ is labeled by $n$. For further reference, denote by $J'_n(L):= \frac{J_n(L)}{[n+1]}$ the normalized colored Jones polynomial.

\begin{dfn}[\cite{MR1659228}]
  \label{dfn:abstractMOYgraph}
  A \emph{MOY graph} is a finite oriented planar trivalent graph $\Gamma = (V(\Gamma),E(\Gamma))$ with a labeling of its edges $\ell : E_\Gamma \to \ZZ_{\geq0}$ such that the flow given by labels and orientations is preserved at each 
 vertex, meaning that every trivalent vertex follows one of the two models \[
\NB{\tikz{\begin{scope}
\draw[->] (0,0) -- (0,0.5) node [at end, above] {$a+b$};  
\draw[>-] (-0.5, -0.5) -- (0,0) node [at start, below] {$a$};  
\draw[>-] (+0.5, -0.5) -- (0,0) node [at start, below] {$b$};  
\begin{scope}[xshift = 5cm, yscale = -1]
  \draw[-<] (0,0) -- (0,0.5) node [at end, below] {$a+b$};  
\draw[<-] (-0.5, -0.5) -- (0,0) node [at start, above] {$a$};  
\draw[<-] (+0.5, -0.5) -- (0,0) node [at start, above] {$b$};  
\end{scope}
\end{scope} }}.
\]
The first is a \emph{merge} vertex, the second is a \emph{split} vertex. For each vertex, there is a \emph{thick} edge (the one labeled by $a+b$ in the depicted models) and two \emph{thin} edges: the one labeled by $a$ in the previous models is the \emph{left} thin edge, the one labeled by $b$ is the \emph{right} thin edge.
\end{dfn}

\begin{rmk}
Let $\Gamma$ be a MOY graph, then the graph $\Gamma'$ obtained by erasing the edges of $\Gamma$ with label $0$ is still a MOY graph. Moreover in all what follows the two graphs can be considered as the same. The label $0$ is introduced only to make some definitions and proofs easier and more natural.
\end{rmk}

\begin{dfn}
  \label{dfn:cabling-and-numbers}
  Let $\Gamma$ be a MOY graphs. The \emph{cabling} of $\Gamma$ is the oriented simple multi-curve in the plane $\RR^2$ obtained from $\Gamma$ by replacing each $e$ edge of $\Gamma$ by $\ell(e)$ parallel oriented intervals and joining these intervals at every vertex of $\Gamma$ by the only compatible oriented crossing-less matching.
  
Let $\gamma$ be a simple multi-curve in the plane. The \emph{rotational} of $\gamma$ is the integer $\rho(\gamma)$ equal to  the number of circles of $\gamma$ oriented counterclockwise minus the number of circles of $\gamma$ oriented clockwise. 

A circle oriented counterclockwise is said \emph{positively oriented} and a circle oriented clockwise is said \emph{negatively oriented}.

The \emph{rotational} of a MOY graph $\Gamma$  is the integer $\rho(\Gamma)$ equal to the rotational of its cabling.
\end{dfn}
\begin{figure}[ht]
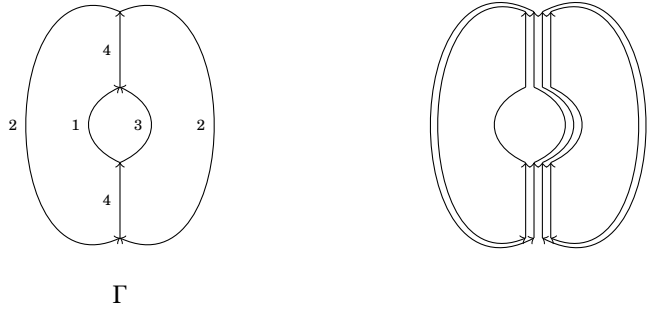

  \centering
  \tikz[yscale = 0.5, xscale=1.1]{\begin{scope}[font=\tiny]
  \coordinate (A) at (0,0);
  \coordinate (B) at (0,2);
  \coordinate (C) at (0,4);
  \coordinate (D) at (0,6);
  \draw[->] (A) -- (B) node[midway, left] {$4$};
  \draw[->] (C) -- (D) node[midway, left] {$4$};
  \draw[->] (B) .. controls +(0.5,0.5) and +(0.5,-0.5) .. (C) node[midway, left] {$3$};
  \draw[->] (B) .. controls +(-0.5,0.5) and +(-0.5,-0.5) .. (C) node[midway, left] {$1$};
  \draw[->] (D) .. controls +(1.5,1.5) and +(1.5,-1.5) .. (A) node[midway, left] {$2$};
  \draw[->] (D) .. controls +(-1.5,1.5) and +(-1.5,-1.5) .. (A) node[midway, left] {$2$};
\end{scope}
  \node at (0, -1.5) {$\Gamma$};

\begin{scope}[xshift = 5cm]
  \coordinate (A1) at (-0.15,0);
  \coordinate (A2) at (-0.05,0);
  \coordinate (A4) at (+0.15,0);
  \coordinate (A3) at (+0.05,0);
  \coordinate (B1) at (-0.15,2);
  \coordinate (B2) at (-0.05,2);
  \coordinate (B4) at (+0.15,2);
  \coordinate (B3) at (+0.05,2);
  \coordinate (C1) at (-0.15,4);
  \coordinate (C2) at (-0.05,4);
  \coordinate (C4) at (+0.15,4);
  \coordinate (C3) at (+0.05,4);
  \coordinate (D1) at (-0.15,6);
  \coordinate (D2) at (-0.05,6);
  \coordinate (D4) at (+0.15,6);
  \coordinate (D3) at (+0.05,6);
  \draw[->] (A1) -- (B1);
  \draw[->] (A2) -- (B2);
  \draw[->] (A3) -- (B3);
  \draw[->] (A4) -- (B4);
  \draw[->] (C1) -- (D1);
  \draw[->] (C2) -- (D2);
  \draw[->] (C3) -- (D3);
  \draw[->] (C4) -- (D4);
  \draw (B1) .. controls +(-0.5,0.5) and +(-0.5,-0.5) .. (C1);
  \draw (B2) .. controls +(0.5,0.5) and +(0.5,-0.5) .. (C2);
  \draw (B3) .. controls +(0.5,0.5) and +(0.5,-0.5) .. (C3);
  \draw (B4) .. controls +(0.5,0.5) and +(0.5,-0.5) .. (C4); 
   \draw[->] (D3) .. controls +(1.65,1.9) and +(1.65,-1.9) .. (A3);
   \draw[->] (D4) .. controls +(1.4,1.3) and +(1.4,-1.3) .. (A4);
   \draw[->] (D1) .. controls +(-1.4,1.3) and +(-1.4,-1.3) .. (A1);
   \draw[->] (D2) .. controls +(-1.65,1.9) and +(-1.65,-1.9) .. (A2);
\end{scope} }
  \caption{A graph MOY $\Gamma$ and its cabling. Its rotational $\rho(\Gamma)$ is equal to $2-2 =0$.}
  \label{fig:cabling}
\end{figure}

\begin{dfn}\label{dfn:sym1}
  The \emph{binomial weight} of a MOY graph $\Gamma$ is the Laurent polynomial $b(\Gamma)$ defined by the following formula:
  \[
  b(\Gamma) = \prod_{v\text{ split vertex}}
  \begin{bmatrix}
    v_l + v_r \\ v_r
  \end{bmatrix},
\]
where for each split vertex $v$, $v_l$ (\resp{}$v_r$) denotes the label of the left thin edge (\resp{}right thin edge) of $v$.
\end{dfn}

\begin{dfn}
  \label{dfn:subMOY}
  Let $\Gamma= (V(\Gamma), E(\Gamma), \ell)$ be a MOY graph. 
A \emph{sub-MOY graph of $\Gamma$} is a MOY graph $\Gamma'=(V(\Gamma), E(\Gamma), \ell') $ such that 
for all $e$ in $E(\Gamma)$, $\ell'(e) \leq \ell(e)$. 

A sub-MOY graph $\Gamma'$ is \emph{cyclic} if for all $e \in E(\Gamma)$, $\ell'(e) \leq 1$.

Let $I$ be a finite set and consider a collection $\Gamma_i= \left(V(\Gamma), E(\Gamma), \ell_i)\right)_{i\in I}$ of sub-MOY graphs of $\Gamma$, the \emph{sum} of the sub-MOY graphs $\left(\Gamma_i\right)_{i\in I}$ is the MOY graph
$(V(\Gamma), E(\Gamma), \sum_{i\in I} \ell_i)$ and is denoted by $\sum_{i \in I}\Gamma_i$.  
\end{dfn}

\begin{dfn}
  \label{dfn:u}
  Let $\Gamma= (V(\Gamma), E(\Gamma), \ell)$ be a MOY graph and $\Gamma_1= \left(V(\Gamma), E(\Gamma), \ell_1)\right)$ and $\Gamma_2= \left(V(\Gamma), E(\Gamma), \ell_2)\right)$ be two sub-MOY graphs of $\Gamma$. For each split vertex $s$ of $\Gamma$, define $u_s(\Gamma_1, \Gamma_2)$ to be the integer defined by the formula:
\[
u_s(\Gamma_1, \Gamma_2):= a_2b_1 - a_1b_2,
\]
where $a_1$ (\resp{}$b_1$) is the label of $\Gamma_1$ of the left (\resp{}right) small edge at the vertex $s$ and $a_2$ (\resp{}$b_2$) is the label of $\Gamma_2$ of the left (\resp{}right) small edge at the vertex $s$. Define   
$u(\Gamma_1, \Gamma_2)$ to be the integer given by the formula:
\[
u(\Gamma_1, \Gamma_2):= \sum_{\textrm{$s$ split vertex of $\Gamma$}} u_s(\Gamma_1, \Gamma_2).
\]
\end{dfn}

\begin{rmk}
Note that the sum of sub-MOY graphs of $\Gamma$ is a MOY graph but not necessarily a sub-MOY graph of $\Gamma$, since it is not required that for all $e\in E(\Gamma)$, $\sum_{i\in I} \ell_i(e)\leq \ell(e)$.
\end{rmk}

For the rest of the paper, fix two non-negative integers $N$ and $M$.

\begin{dfn}
  \label{dfn:coloring}
  Let $\Gamma$ be a MOY graph, an \emph{$(N|M)$-coloring} (or simply \emph{coloring}) of $\Gamma$ is a pair $(\underline{\Gamma^E}, \underline{\Gamma^S})$ where  $\underline{\Gamma^E} = (\Gamma^E_1, \dots, \Gamma^E_N)$ is an $N$-tuple of cyclic sub-MOY graph of $\Gamma$ and $\underline{\Gamma^S} = (\Gamma^S_1, \dots, \Gamma^S_M)$ is an $M$-tuple of sub-MOY graph of $\Gamma$ such that:
\[
\sum_{j=1}^N \Gamma^E_{j} 
+
\sum_{i=1}^M \Gamma^S_{i} 
=
\Gamma.
\] 
The set of $(N|M)$-colorings of $\Gamma$ is denoted by $\col[N|M]{\Gamma}$ (or simply $\col\Gamma$).
\end{dfn}

\begin{rmk}
  \label{rmk:col2multiset}
  Let $\Gamma$ be a MOY graph and $c = (\underline{\Gamma^E}, \underline{\Gamma^S})$ a coloring of $\Gamma$. If $e$ is an edge of $\Gamma$, consider the pair $(c^E(e), c^S(e))$:  $c^E(e)$ is the subset of $\set{N}$ which contains $j$ if and only if $\ell_{\Gamma^E_j}(e)=1$ and $c^S(e)$ is the sub-multiset of $\set{M}$ where $i$ appears with multiplicity $\ell_{\Gamma^S_i}(e)$. We have $\#c^E(e) + \#c^S(e) = \ell(e)$. A coloring can be defined by such a  collection  $(c^E(e), c^S(e))_{e\in E(\Gamma)}$ satisfying a flow condition at every vertex.
\end{rmk}

\begin{dfn}
  \label{dfn:weight}
  Let $\Gamma$ be a MOY graph and $c=(\underline{\Gamma^E}, \underline{\Gamma^S})$ an $(N|M)$-coloring of $\Gamma$. The \emph{$s$-weight of $c$} is the integer denoted by $w_s(c)$ and defined by the following formula:
\begin{align*}
 w_s(c):=& \sum_{1\leq i <j \leq M} u(\Gamma^S_j,\Gamma_i^S) -\sum_{1\leq i <j \leq N} u(\Gamma^E_j,\Gamma_i^E) + \sum_{i=1}^N\sum_{j=1}^M u(\Gamma^E_i,\Gamma_j^S).
\end{align*}
The \emph{$\rho$-weight of $c$} is the integer denoted by $w_\rho(c)$ and defined by the following formula:

\begin{align*}
 w_\rho(c):=& \sum_{i=1}^N(N+M-2i+1) \rho(\Gamma^E_i) + \sum_{j=1}^M (N+M-2j+1)\rho(\Gamma^S_j). 
 \end{align*}
The \emph{multiplicity of $c$} is the Laurent polynomial $m(c)$ with positive coefficients and symmetric in $q$ and $q^{-1}$ given by the following formula:
\[
m(c) = \prod_{i=1}^M b(\Gamma_i^S).
\]
The \emph{parity of $c$} is the integer $s(c)$ given by the following formula:
\[
  s(c) = {\sum_{i=1}^M \rho(\Gamma_i^S)},
\]
as the name suggests, we are only interested in the value of $s(c)$ modulo $2$. Equality modulo $2$ is denoted by $\equiv$ in what follows.
The \emph{combinatorial $(N|M)$-evaluation of the coloring $c$} is the Laurent polynomial $\kup{\Gamma,c}_{N|M}$ given by the following formula:
\[
\kup{\Gamma,c}_{N|M} =  (-1)^{s(c)} q^{w_s(c)+w_\rho(c)}m(c).
\]
Finally, the \emph{combinatorial $(N|M)$-evaluation of $\Gamma$} is the Laurent polynomial $\kup{\Gamma}_{N|M}$ given by the following formula:
\[
\kup{\Gamma}_{N|M} = \sum_{c\in \col{\Gamma}} \kup{\Gamma,c}_{N|M}.
\]
\end{dfn}

We give in Appendix A a detailed account of the Reshetikhin--Turaev invariant for MOY graphs for the quantum group $U_q(\gll_{N|M})$. For any MOY graph $\Gamma$ it is denoted $\kups{\Gamma}_{N|M}$. The aim of the rest of this section is to prove the next theorem.

\begin{thm}
  \label{thm:maintheorem}
  For any MOY graph $\Gamma$, the algebraic and the combinatorial evaluation of $\Gamma$ agree. This means:
\[
\kups{\Gamma}_{N|M} = \kup{\Gamma}_{N|M}.
\]
In particular, the combinatorial evaluation depends only on $N-M$ and is symmetric in $q$ and $q^{-1}$. 
\end{thm}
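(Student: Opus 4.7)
The overall strategy is to show that the combinatorial evaluation $\kup{\cdot}_{N|M}$ satisfies the same local relations as the algebraic one $\kups{\cdot}_{N|M}$ defined in Appendix~\ref{sec:moy-graphs-an}, together with a matching base case (the empty graph, which evaluates to $1$). Since the Reshetikhin--Turaev functor is determined by these relations and this normalisation, equality will follow.

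The plan is to verify the following local identities for $\kup{\cdot}_{N|M}$, one at a time, by manipulating the sum over colorings $\col{\Gamma}$.  First, I would compute the evaluation of a single positively oriented circle labeled $k$: a coloring is nothing but a finite multi-subset of $\set{N} \sqcup \set{M}$ of cardinality $k$ (with $\set{N}$ contributing subsets and $\set{M}$ multi-subsets), and applying the closed formula of the final Proposition of Section~\ref{sec:qi} together with Proposition~\ref{prop:dec-binomial} expresses the total as a super $q$-binomial depending only on $N-M$ and $k$.  Second, I would establish the associativity relation for the two triangle configurations $\stgamma$ and $\stgammaprime$, which amounts to a one-to-one correspondence between colorings together with equality of weights; the rotational part vanishes since all relevant edges are locally matched, and the $s$-weight contributions from the two split vertices combine into a single expression independent of the chosen resolution.

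Third, I would treat the digon removal: on the configuration $\digona$ the colorings split according to how the label on each of the two inner edges is distributed among the $N$-layers and $M$-layers.  Using Corollary~\ref{cor:sum-integers} to collect $q$-powers and Proposition~\ref{prop:dec-binomial} to repackage the $q$-binomial coming from the split vertex together with the one from the merge vertex, the sum reduces to the single strand $\verta$ times a super $q$-binomial.  Fourth, the square / H=I relation $\squarec = \sum_j \squared$ on both sides must be matched; every coloring of one side is partitioned by its restriction on the horizontal rungs, and a book-keeping of rotational contributions, $u$-terms, and $b$-factors reduces the identity, after using Lemma~\ref{lem:diff-prod} and Proposition~\ref{prop:dec-binomial}, to an equality of $q$-binomial coefficients that already holds purely in the $q$-integer algebra.

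Once the circle, associativity, digon and square relations are verified for $\kup{\cdot}_{N|M}$, one invokes the fact (proved in Appendix~\ref{sec:moy-graphs-an}) that the Reshetikhin--Turaev functor for $U_q(\gll_{N|M})$ satisfies the same relations with the same circle value, and that any closed MOY graph can be reduced to a disjoint union of circles by successive applications of these moves; this yields $\kups{\Gamma}_{N|M} = \kup{\Gamma}_{N|M}$.  The two consequences of the theorem then follow: dependence only on $N-M$ because the right-hand side of each basic identity only involves the super $q$-binomial in $N-M$, and $q \leftrightarrow q^{-1}$ symmetry via the involution on $\col{\Gamma}$ sending $(\underline{\Gamma^E},\underline{\Gamma^S}) \mapsto (\underline{\Gamma^E}^{\mathrm{rev}}, \underline{\Gamma^S}^{\mathrm{rev}})$ that reverses the internal ordering of the $N$- and $M$-layers and flips the sign of every $u_s$ and $\rho$ contribution.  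The main obstacle is the square relation: both sides have complicated coloring sets, and matching them requires simultaneously tracking five contributions ($s$-weight, $\rho$-weight, multiplicity, parity, sign) while applying Proposition~\ref{prop:dec-binomial} in several variables; the other relations, by comparison, follow from a single application of the $q$-identities of Section~\ref{sec:qi}.
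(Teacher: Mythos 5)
Your strategy is exactly the paper's: by Proposition~\ref{prop:completeness} it suffices to check that $\kup{\bullet}_{N|M}$ is multiplicative under disjoint union and satisfies the local relations of Proposition~\ref{prop:rel-kups}, and the paper spends the remainder of Section~\ref{sec:colorings-moy-graphs} verifying the circle, associativity, digon and square identities by precisely the coloring-restriction bookkeeping you outline (fix a coloring of the reduced graph, sum over the colorings inducing it, and collect $q$-powers via the identities of Section~\ref{sec:qi}). One detail in your last step is wrong, though: the involution you propose for the $q\leftrightarrow q^{-1}$ symmetry, reversing the internal order of the $E$- and $S$-layers, does not negate the weights. The coefficient $N+M-2i+1$ in $w_\rho$ is sent to $M-N+2i-1$, which is not its negative unless $M=0$ (resp.\ $N=0$), and the mixed term $\sum_{i,j}u(\Gamma^E_i,\Gamma^S_j)$ in $w_s$ is a full double sum, hence invariant rather than negated under reordering the layers. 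The symmetry in $q$ and $q^{-1}$ instead comes for free once the equality with $\kups{\bullet}_{N|M}$ is established (the algebraic evaluation is symmetric by construction), or alternatively from the fact that all coefficients in the generating relations are symmetric Laurent polynomials; what a coloring-level correspondence genuinely proves is Lemma~\ref{prop:mirror-image}, namely that $q\mapsto q^{-1}$ corresponds to the \emph{mirror image} of the graph, not to the graph itself. The rest of your argument, including the deduction that everything depends only on $N-M$, is sound and matches the paper.
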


Following Proposition \ref{prop:completeness}, one only needs to check that $\kup{\bullet}_{N|M}$ satisfies the multiplicativity property and the local relations of Proposition \ref{prop:rel-kups}. This is the object of the rest of this section. 

\begin{lem}
  \label{prop:mirror-image}
  Let $\Gamma$ be a MOY graph, and $\overline{\Gamma}$ be its image under the transformation
  $\left(\begin{smallmatrix}
    x \\ y
  \end{smallmatrix}\right)\mapsto \left(\begin{smallmatrix}
    -x \\ y
  \end{smallmatrix}\right)$ Then
  $\kup{\Gamma}_{N|M}(q) = \kup{\overline{\Gamma}}_{N|M}(q^{-1})$.
\end{lem}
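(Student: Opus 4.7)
The plan is to construct a bijection between colorings of $\Gamma$ and colorings of $\overline{\Gamma}$ and track how each factor in the local weight transforms under reflection. Reflecting the plane along the vertical axis preserves orientations of arcs and carries sub-MOY graphs of $\Gamma$ to sub-MOY graphs of $\overline{\Gamma}$, so there is an obvious involution $c \mapsto \overline{c}$ on colorings. It then suffices to show, term by term in Definition~\ref{dfn:weight}, that
\[
\kup{\overline{\Gamma}, \overline{c}}_{N|M}(q) \;=\; \kup{\Gamma, c}_{N|M}(q^{-1}),
\]
and sum over $c \in \col{\Gamma}$.

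Next I would treat each ingredient of the weight separately. First, the multiplicity $m(c)$: at every split vertex the left and right thin labels swap under reflection, so the binomial $\bigl[\begin{smallmatrix} v_l+v_r \\ v_r \end{smallmatrix}\bigr]$ becomes $\bigl[\begin{smallmatrix} v_l+v_r \\ v_l \end{smallmatrix}\bigr]$, which is equal to the former (and is, in any case, symmetric in $q \leftrightarrow q^{-1}$). So $m(\overline{c})(q) = m(c)(q^{-1})$. Next, rotationals: since the horizontal reflection reverses the orientation of every embedded circle, $\rho(\overline{\Gamma^E_i}) = -\rho(\Gamma^E_i)$ and $\rho(\overline{\Gamma^S_j}) = -\rho(\Gamma^S_j)$. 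This immediately yields $w_\rho(\overline{c}) = -w_\rho(c)$, and $s(\overline{c}) = -s(c) \equiv s(c) \pmod 2$, so the sign $(-1)^{s(c)}$ is preserved. Finally, at each split vertex the quantity $u_s(\Gamma_1,\Gamma_2) = a_2b_1 - a_1b_2$ becomes $b_2a_1 - b_1a_2 = -u_s(\Gamma_1,\Gamma_2)$ after the left/right swap, hence $u(\overline{\Gamma_1}, \overline{\Gamma_2}) = -u(\Gamma_1,\Gamma_2)$ and $w_s(\overline{c}) = -w_s(c)$.

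Combining the three observations,
\[
\kup{\overline{\Gamma},\overline{c}}_{N|M}(q)
= (-1)^{s(c)} q^{-w_s(c)-w_\rho(c)} m(c)(q)
= \kup{\Gamma,c}_{N|M}(q^{-1}),
\]
where in the last equality I used that $m(c)$ is symmetric in $q$ and $q^{-1}$. Summing over the bijection $c \leftrightarrow \overline{c}$ gives the claimed identity $\kup{\overline{\Gamma}}_{N|M}(q) = \kup{\Gamma}_{N|M}(q^{-1})$.

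No step is really the main obstacle here; the only subtlety is making sure that all the signs and the orientation-dependence of $\rho$ are correctly tracked, and in particular that horizontal reflection (as opposed to a vertical one) truly preserves edge orientations while flipping the sign of every rotational and swapping every (left, right) pair of thin edges at each trivalent vertex.
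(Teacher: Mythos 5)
Your argument is correct and is essentially identical to the paper's proof: both set up the canonical bijection $c\mapsto\overline{c}$ on colorings and verify that $m$ is invariant, $s$ is preserved mod $2$, and $w_s$, $w_\rho$ change sign. The only difference is that you spell out the justifications (left/right swap at split vertices, sign flip of $u_s$ and of the rotational) that the paper leaves implicit.
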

\begin{proof}
  There is a canonical one-to-one correspondence between colorings of $\Gamma$ and colorings of $\overline{\Gamma}$. Let $c$ be a coloring of $\Gamma$ and $\overline{c}$ the corresponding coloring of $\overline{\Gamma}$. The following identities hold:
  \begin{align*}
    m(c)(q) &= m(c)\left(q^{-1}\right) = m\left(\overline{c}\right)(q) = m\left(\overline{c}\right)\left(q^{-1}\right),\\
s(c) &\equiv s\left(\overline{c}\right), \\
w_s(c) &= - w_s\left(\overline{c}\right) 
\quad\textrm{and} \\
w_\rho(c) &= -w_\rho\left(\overline{c}\right).
\end{align*}
From this, one immediately obtains $\kup{\Gamma}_{N|M}(q) = \kup{\overline{\Gamma}}_{N|M}(q^{-1})$.

\end{proof}

\begin{lem}
  \label{lem:disjointunion}
  Let $\Gamma$ and $\Upsilon$ be two MOY graphs, then
\[
\kup{\Gamma \sqcup \Upsilon}_{N|M} = \kup{\Gamma}_{N|M}\kup{\Upsilon}_{N|M}.
\]
\end{lem}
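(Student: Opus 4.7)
The plan is to set up a bijection $\col{\Gamma \sqcup \Upsilon} \longleftrightarrow \col{\Gamma} \times \col{\Upsilon}$ and verify that under this bijection each of the ingredients entering $\kup{\cdot,c}_{N|M}$ splits as a product (or a sum, in the exponents) of the corresponding ingredients for $\Gamma$ and $\Upsilon$. Once this is done, the identity follows by distributivity of the sum.

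First I would observe that, since $\Gamma$ and $\Upsilon$ share no edges and no vertices, a cyclic sub-MOY graph (resp.\ a sub-MOY graph) of $\Gamma \sqcup \Upsilon$ is uniquely determined by the pair of its restrictions to $\Gamma$ and to $\Upsilon$. Hence an $(N|M)$-coloring $c=(\underline{\Gamma^E},\underline{\Gamma^S})$ of $\Gamma \sqcup \Upsilon$ splits into a pair $(c_\Gamma, c_\Upsilon)$, where $c_\Gamma = (\underline{\Gamma^E}|_\Gamma, \underline{\Gamma^S}|_\Gamma)$ and $c_\Upsilon = (\underline{\Gamma^E}|_\Upsilon, \underline{\Gamma^S}|_\Upsilon)$. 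The flow condition at each vertex is a local condition, so $c_\Gamma$ and $c_\Upsilon$ are colorings of $\Gamma$ and $\Upsilon$ respectively, and this assignment is a bijection $\col{\Gamma\sqcup \Upsilon} \simeq \col\Gamma\times\col\Upsilon$.

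Next I would check that each invariant of a coloring is additive/multiplicative with respect to this bijection. The binomial weight $b$ from Definition~\ref{dfn:sym1} is a product over split vertices, and every split vertex of $\Gamma \sqcup \Upsilon$ belongs to exactly one of the two components; hence $m(c) = m(c_\Gamma)\, m(c_\Upsilon)$. The rotational $\rho$ of a planar multi-curve is additive under disjoint union of curves, and the cabling of $\Gamma \sqcup \Upsilon$ is the disjoint union of the cablings; so $\rho(\Gamma_i^{E/S}) = \rho(\Gamma_i^{E/S}|_\Gamma) + \rho(\Gamma_i^{E/S}|_\Upsilon)$, which yields $w_\rho(c) = w_\rho(c_\Gamma) + w_\rho(c_\Upsilon)$ and $s(c) \equiv s(c_\Gamma) + s(c_\Upsilon)$. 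For $w_s$, note that $u(\Gamma_i, \Gamma_j)$ is by Definition~\ref{dfn:u} a sum of contributions $u_s(\Gamma_i,\Gamma_j)$ indexed by the split vertices $s$ of the ambient MOY graph, and each $u_s$ is a local quantity determined by the labels of the sub-MOY graphs at the two small edges incident to $s$. Since every split vertex of $\Gamma\sqcup \Upsilon$ lies in exactly one component, partitioning the sum according to the component gives $u(\Gamma_i,\Gamma_j) = u(\Gamma_i|_\Gamma,\Gamma_j|_\Gamma) + u(\Gamma_i|_\Upsilon,\Gamma_j|_\Upsilon)$, and therefore $w_s(c) = w_s(c_\Gamma) + w_s(c_\Upsilon)$.

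Combining these facts and Definition~\ref{dfn:weight} gives
\[
\kup{\Gamma\sqcup\Upsilon,c}_{N|M} = (-1)^{s(c_\Gamma)+s(c_\Upsilon)} q^{w_s(c_\Gamma)+w_\rho(c_\Gamma)+w_s(c_\Upsilon)+w_\rho(c_\Upsilon)} m(c_\Gamma)m(c_\Upsilon) = \kup{\Gamma,c_\Gamma}_{N|M}\,\kup{\Upsilon,c_\Upsilon}_{N|M}.
\]
Finally, summing over $c \in \col{\Gamma\sqcup\Upsilon}$ and using the bijection above to rewrite the sum as a double sum over $c_\Gamma\in\col\Gamma$ and $c_\Upsilon\in\col\Upsilon$ yields the claimed identity. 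There is no real obstacle here: everything reduces to checking that the four ingredients $m$, $s$, $w_s$, $w_\rho$ are defined entirely from local data attached to the (vertices and circles of the cabling of the) underlying MOY graph, and hence split over a disjoint union.
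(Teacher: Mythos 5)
Your proof is correct and follows essentially the same route as the paper: a canonical bijection $\col{\Gamma\sqcup\Upsilon}\simeq\col{\Gamma}\times\col{\Upsilon}$, followed by the verification that $m$ is multiplicative and $s$, $w_s$, $w_\rho$ are additive, the latter two by splitting the sums over split vertices and over circles of the cablings according to the component they lie in. Nothing is missing.
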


\begin{proof}
  First of all, there is a canonical one-to-one correspondence between the set of colorings of $\Gamma \sqcup \Upsilon$ and the Cartesian product of sets of colorings of $\Gamma$ and $\Upsilon$. If $c$ and $d$ are colorings of $\Gamma$ and $\Upsilon$, denote by $(c,d)$ the corresponding coloring of $\Gamma\sqcup\Upsilon$. It is enough to prove that for all colorings $(c,d)$,
\[
(-1)^{s(c)+s(d)} q^{w_s(c)+w_\rho(d)+w_s(c) + w_\rho(d)}m(c)m(d) 
=
(-1)^{s((c,d))} q^{w_s((c,d)) + w_\rho((c,d))}m((c,d)).
\]
Let $c=(\underline{\Gamma^E}, \underline{\Gamma^S})$ (\resp{}$d=(\underline{\Upsilon^E}, \underline{\Upsilon^S})$) be a coloring of $\Gamma$ (\resp{}$\Upsilon$). 
One has $s((c,d))=s(c)+s(d)$
and $m((c,d)) = m(c)m(d).$ Hence, it remains to prove that $w((c,d)) = w(c) + w(d)$. Let us denote by $\Phi$ the MOY graph $\Gamma \sqcup \Upsilon$. 
One has
\begin{align*}
  w_s((c,d))&= \sum_{1\leq i <j \leq M} u(\Phi^S_j,\Phi_i^S) -\sum_{1\leq i <j \leq N} u(\Phi^E_j,\Phi_i^E) + \sum_{i=1}^N\sum_{j=1}^M u(\Phi^E_i,\Phi_j^S) \\
&=  \sum_{1\leq i <j \leq M} u(\Gamma^S_j,\Gamma_i^S) + u(\Upsilon^S_j,\Upsilon_i^S)  -\sum_{1\leq i <j \leq N} u(\Gamma^E_j,\Gamma_i^E) + u(\Upsilon^E_j,\Upsilon_i^E) \\ &
+ \sum_{i=1}^N\sum_{j=1}^M u(\Gamma^E_i,\Gamma_j^S) +u(\Upsilon^E_i,\Upsilon_j^S). \\
=& w_s(c) + w_s(d)
\end{align*}
and
\begin{align*}
  w_\rho((c,d))=&
\sum_{i=1}^M(N+M-2i+1)\rho(\Phi^S_i)  +
\sum_{i=1}^N(N+M-2i+1)\rho(\Phi^E_i)  \\
= & \sum_{i=1}^M (N+M-2i+1)\left(\rho(\Gamma^S_i) + \rho(\Upsilon^S_i)\right) \\ & +  \sum_{i=1}^N (N+M-2i+1)\left(\rho(\Gamma^E_i) + \rho(\Upsilon^E_i)\right) \\
=& w_\rho(c) + w_\rho(d). \qedhere
\end{align*}
\end{proof}
\begin{lem}
  \label{lem:circles}
  For any non-negative integer $k$, the following identity holds: 
  \[
\kup{\NB{\tikz{\draw[->] (0,0) arc(0:360:0.4) node[scale=0.6, right] {$k$};}}\!}_{N|M} 
=
\begin{bmatrix}
  N-M \\ k
\end{bmatrix}
=
\kup{\NB{\tikz{\draw[<-] (0,0) arc(0:360:0.4) node[scale=0.6, right] {$k$};}}\!}_{N|M}.
\]
\end{lem}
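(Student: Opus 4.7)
The plan is to parametrise $(N|M)$-colorings of the counterclockwise circle $\Gamma_k$ of label $k$ via Remark~\ref{rmk:col2multiset}: such a coloring is encoded by a pair $(A,B)$ with $A\in\ps[k_1]{\set{N}}$ and $B\in\mps[k_2]{\set{M}}$ satisfying $k_1+k_2=k$, since $\Gamma_k$ has a single edge and no vertices (so the flow condition is vacuous, each cyclic sub-MOY graph $\Gamma^E_i$ is determined by whether $i\in A$, and each $\Gamma^S_j$ is determined by the multiplicity $B(j)$). With no split vertices one has $m(c)=1$ and $w_s(c)=0$, while $s(c)\equiv \#B=k_2$ and, for the counterclockwise orientation,
\[ w_\rho(c) = \sum_{i\in A}(N+M-2i+1) + \sum_{j=1}^M B(j)(N+M-2j+1). \]

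Splitting by $(k_1,k_2)$, the sum defining $\kup{\Gamma_k}_{N|M}$ factorises into an $A$-sum times a $B$-sum weighted by $(-1)^{k_2}$. The next step is to establish the two closed-form identities
\[ \sum_{A\in\ps[k_1]{\set{N}}} q^{\sum_{i\in A}(N+M-2i+1)} = q^{Mk_1}\qbina{N}{k_1}, \]
\[ \sum_{B\in\mps[k_2]{\set{M}}} q^{\sum_j B(j)(N+M-2j+1)} = q^{Nk_2}\qbina{M+k_2-1}{k_2}. \]
Both follow from the Proposition at the end of Section~\ref{sec:qi} after extracting the constant prefactor $q^{(N+M+1)k_i}$, recognising the remaining exponent as (minus) the degree statistic of the proposition, and using the $q\leftrightarrow q^{-1}$ symmetry of quantum binomials; the sign $(-1)^{k_2}$ will then convert $\qbina{M+k_2-1}{k_2}$ into $\qbina{-M}{k_2}$ via the identity of the remark following Definition~\ref{dfn:quantum-numbers}.

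Combining these evaluations gives
\[ \kup{\Gamma_k}_{N|M} = \sum_{k_1+k_2=k} q^{Mk_1+Nk_2}\qbina{N}{k_1}\qbina{-M}{k_2}, \]
which is precisely the right-hand side of Proposition~\ref{prop:dec-binomial} applied with $n_1=N$ and $n_2=-M$, hence equal to $\qbina{N-M}{k}$. The clockwise circle is then handled immediately by Lemma~\ref{prop:mirror-image}: its evaluation equals $\qbina{N-M}{k}(q^{-1})$, which coincides with $\qbina{N-M}{k}(q)$ by the $q\leftrightarrow q^{-1}$ symmetry.

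The main obstacle is the bookkeeping in the two intermediate identities for the $A$- and $B$-sums: one must carefully convert the ``raw'' rotational exponents $\sum_{i\in A}(N+M-2i+1)$ and $\sum_{j} B(j)(N+M-2j+1)$ into the degree statistic used in the proposition of Section~\ref{sec:qi}, and keep track of the additive $q^{Mk_1}$ and $q^{Nk_2}$ shifts that arise from the difference between $(N+M+1)$ and the $(N+1)$ or $(M+1)$ appearing in the degree formulas.
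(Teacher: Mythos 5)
Your proposal is correct and follows essentially the same route as the paper's proof: encode colorings of the circle by a pair (subset of $\set{N}$, multi-subset of $\set{M}$), reduce $w_\rho$ to the degree statistic of the $q$-binomial proposition in Section~\ref{sec:qi} up to the shifts $q^{Mk_1}$ and $q^{Nk_2}$, convert the sign into $\qbinil{-M}{k_2}$, apply Proposition~\ref{prop:dec-binomial} with $n_1=N$, $n_2=-M$, and handle the clockwise circle via Lemma~\ref{prop:mirror-image} and the $q\leftrightarrow q^{-1}$ symmetry of quantum binomials. No substantive differences.
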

\begin{proof}
Let us start with the first identity. Denote by $\Gamma$ the positively oriented circle of label $k$. A  coloring $c=(\underline{\Gamma^E}, \underline{\Gamma^S})$  of $\Gamma$ is completely encoded by a pair consisting of a subset $X_c$ of $\set{N}$ with $k^E_c$ elements a multi-subset $Y_c$ of $\set{M}$ with $k^S_c$ elements such that $k_c^E+ k_c^S =k$. 
One has $s(c) = k^S_c$ and $m(c)= 1$. Since there is no vertex, $w_s(\Gamma)=0$.
One has
\begin{align*}
  w_\rho(c) =&  \sum_{i=1}^M (N+M-2i+1)\rho(\Gamma^S_i) + \sum_{i=1}^N(N+M-2i+1) \rho(\Gamma^E_i) \\
    =&  Nk^S_c + Mk^E_c +  \sum_{1\leq i <j \leq M} \rho (\Gamma^S_j) - \rho(\Gamma_i^S) + \sum_{1\leq i <j \leq N} \rho (\Gamma^E_j) - \rho(\Gamma_i^E).  
\end{align*}
Hence, if one fixes two integers $k^S$ and $k^E$ such that $k= k^S + k^E$, one gets:
\begin{align*}
  \sum_{\substack{c \in \col{\Gamma} \\ k^S_c = k^S \\ k^E_c= k^E}}
  q^{w_\rho(c)}&=  
q^{Mk^E + Nk^S} \!\!\!\!\!
\sum_{\substack{(\underline{\Gamma^E}, \underline{\Gamma^S}) \in \col{\Gamma} 
\\ k^S_c = k^S \\ k^E_c= k^E}}  \!\!\!\!\!
q^{\sum_{1\leq i <j \leq M} \rho (\Gamma^S_j) - \rho(\Gamma_i^S)+\sum_{1\leq i <j \leq N} \rho (\Gamma^E_j) - \rho(\Gamma_i^E)}\\
&=
q^{Mk^E + Nk^S} 
\sum_{\substack{ X \in \ps[k^E]{\set{N}}\\Y\in \mps[k^S]{\set{M}}}}
q^{\deg[\set{M}]{X}} q^{\deg[\set{N}]{Y}} \\
&= q^{Mk^E_c + Nk^S_c} 
\begin{bmatrix}
  M + k^S -1 \\ k^S 
\end{bmatrix}
\begin{bmatrix}
  N \\ k^E
\end{bmatrix}.
\end{align*}
Hence, using Proposition~\ref{prop:dec-binomial}, one gets:
\begin{align*}
  \kup{\Gamma}_{N|M}&= \sum_{k^S + k^E= k} (-1)^{k^S} q^{Mk^E + Nk^S} 
\begin{bmatrix}
  M + k^S -1 \\ k^S 
\end{bmatrix}
\begin{bmatrix}
  N \\ k^E
\end{bmatrix}\\
&=\sum_{k^S + k^E= k} q^{Mk^E + Nk^S} 
\begin{bmatrix}
  - M \\ k^S 
\end{bmatrix}
\begin{bmatrix}
  N \\ k^E
\end{bmatrix} \\ &
= \begin{bmatrix}
  N - M \\k
\end{bmatrix}.
\end{align*}
The second identity follows from the first one and Lemma~\ref{prop:mirror-image} (because quantum binomials are symmetric in $q$ and $q^{-1}$).
\end{proof}
\begin{lem}
  \label{lem:assoc}
  The combinatorial $(N|M)$-evaluation satisfies the following two  local identities:
\begin{align} \label{eq:extrelass}
   \kup{\stgamma}_{N|M} &= \kup{\stgammaprime}_{N|M},
\\   \kup{\stgammar}_{N|M} &= \kup{\stgammaprimer}_{N|M}.
 \end{align}
\end{lem}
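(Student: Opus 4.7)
The plan is to establish a bijection between $\col\stgamma$ and $\col\stgammaprime$ which is the identity on the four external edges of the local picture (the input $(i+j+k)$-edge and the three output edges of labels $i$, $j$, $k$). The flow condition at the two trivalent vertices then forces the coloring of the internal edge: in $\stgamma$ the internal edge carries the data on the $j$- and $k$-edges (label $j+k$), while in $\stgammaprime$ it carries the data on the $i$- and $j$-edges (label $i+j$). I would then verify that all four ingredients of $\kup{\Gamma,c}_{N|M}$ — the multiplicity $m(c)$, the $s$-weight $w_s(c)$, the $\rho$-weight $w_\rho(c)$, and the parity $s(c)$ — are preserved under this bijection.

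For the multiplicity, I would restrict to a single symmetric sub-MOY graph $\Gamma^S_p$ and write $y_i,y_j,y_k$ for its labels on the three external edges. The two split vertices of $\stgamma$ contribute $\qbina{y_i+y_j+y_k}{y_j+y_k}\qbina{y_j+y_k}{y_k}$, whereas those of $\stgammaprime$ contribute $\qbina{y_i+y_j+y_k}{y_k}\qbina{y_i+y_j}{y_j}$. Both products collapse to the $q$-trinomial $[y_i+y_j+y_k]!\big/\bigl([y_i]![y_j]![y_k]!\bigr)$, using the symmetry $\qbina{n}{k}=\qbina{n}{n-k}$ and the factorial form; so $m(c)$ is preserved. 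For the $s$-weight, given any ordered pair of sub-MOY graphs $\Gamma_1,\Gamma_2$ with external labels $(y_i^r,y_j^r,y_k^r)$, $r=1,2$, the formula $u_v=a_2b_1-a_1b_2$ applied at the two split vertices of $\stgamma$ gives $y_i^2y_j^1+y_i^2y_k^1+y_j^2y_k^1-y_i^1y_j^2-y_i^1y_k^2-y_j^1y_k^2$ after cancellation, and the same expression results on the $\stgammaprime$ side. Hence $u(\Gamma_1,\Gamma_2)$ is unchanged and so is every summand of $w_s(c)$.

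For the $\rho$-weight and the parity, each sub-MOY graph restricted to the local disk is the unique crossing-less planar tangle joining the incoming strands to the outgoing ones; such a tangle is determined up to planar isotopy rel boundary, and the two pictures differ only by such an isotopy. Consequently the full cablings of every $\Gamma^E_i$ and $\Gamma^S_j$ are planarly isotopic on the two sides, so the numbers of positively and negatively oriented circles are unchanged; hence $\rho(\Gamma^E_i)$ and $\rho(\Gamma^S_j)$ are preserved, and therefore so are $w_\rho(c)$ and $s(c)$.

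For the second identity $\kup{\stgammar}_{N|M}=\kup{\stgammaprimer}_{N|M}$, the same bijection of colorings applies but all local vertices are now merge vertices. Since $m(c)$ and the $u$-form see only split vertices, the multiplicity and $s$-weight contributions in the local region are identically zero on both sides, and the cabling argument for the $\rho$-weight and parity is unchanged. The substantive step in the whole proof is the trinomial collapse for $m(c)$; the $s$-weight comparison, while requiring a careful cancellation, is purely algebraic bookkeeping, and the $\rho$-weight/parity comparison is topological and immediate once the local cablings are recognized as unique up to isotopy rel boundary.
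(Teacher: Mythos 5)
Your proof is correct and takes essentially the same route as the paper's: the paper asserts the canonical bijection of colorings and states without verification that $m$, $s$, $w_s$ and $w_\rho$ are preserved, whereas you supply the explicit checks (the trinomial collapse for $m(c)$, the cancellation in $u(\Gamma_1,\Gamma_2)$, and the isotopy of local cablings for $\rho$), all of which are accurate. Nothing needs to change.
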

\begin{proof}
We only prove the first one. Let us denote by $\Gamma$  and $\Upsilon$ respectively the MOY graph on the left and right-hand side of this identity. There is a canonical one-to-one correspondence between the $(N|M)$-colorings of $\Gamma$ and of $\Upsilon$. Let $c=(\underline{\Gamma^E}, \underline{\Gamma^S})$ be a $(N|M)$-coloring of $\Gamma$ and $c'=(\underline{\Upsilon^E}, \underline{\Upsilon^S})$ be the corresponding coloring of $\Upsilon$. The following holds:
\[
m(c) = m(c'), \quad
s(c) = s(c'),\quad
w_s(c) = w_s(c')\quad
\textrm{and} \quad
w_\rho(c) = w_\rho(c').
\]
This implies $\kup{\Gamma,c}_{N|M} = \kup{\Upsilon, c'}_{N|M}$ and subsequently $\kup{\Gamma}_{N|M} = \kup{\Upsilon}_{N|M}$.
\end{proof}

\begin{lem}
  \label{lem:easy-digon}
  The combinatorial $(N|M)$-evaluation satisfies the following identity:
\begin{align} \label{eq:easy-digon}
   \kup{\digonaone}_{N|M} = [m+1]\kup{\,\vertaone\!}_{N|M}.
   \end{align}
\end{lem}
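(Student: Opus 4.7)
The plan is to enumerate colorings of $\Gamma = \digonaone$ according to the induced coloring on the two thick edges. By flow conservation at the split and merge vertices, both thick edges must carry the same decoration, which is equivalently a coloring $c_0$ of the single edge $\Upsilon = \vertaone$; such a $c_0$ is encoded by a pair $(X,Y)$ with $X\in\ps{\set{N}}$, $Y\in\mps{\set{M}}$, and $\#X+\#Y=m+1$. Extending $c_0$ to a coloring $c$ of $\Gamma$ then amounts to saying what rides on the right thin edge (of label $1$): either a single exterior color $i_0\in X$ (Case~A) or a single unit of a symmetric color $j_0$ with $Y(j_0)\geq 1$ (Case~B). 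Both $\Gamma$ and $\Upsilon$ are open at top and bottom, so for every coloring $w_\rho=0$ and $s\equiv 0$; in particular $\kup{\Upsilon,c_0}_{N|M}=1$, and for $\Gamma$ only $w_s(c)$ and $m(c)$ are nontrivial. It therefore suffices to show that for every outer coloring $c_0$, the sum of $\kup{\Gamma,c}_{N|M}$ over the extensions of $c_0$ equals $[m+1]$.

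The computation of $w_s(c)$ is driven entirely by the unique split vertex, where each sub-MOY graph $\Gamma^E_i$ or $\Gamma^S_j$ carries some labels $(a,b)$ with $a\leq m$ and $b\leq 1$. Setting $k^E:=\#X$ and $k^S:=\#Y$, I will tabulate these labels in each case and evaluate the three contributions $-\!\sum_{i<j} u(\Gamma^E_j,\Gamma^E_i)$, $\sum_{i<j} u(\Gamma^S_j,\Gamma^S_i)$, and $\sum_{i,j} u(\Gamma^E_i,\Gamma^S_j)$ appearing in Definition~\ref{dfn:weight}. In Case~A, writing $i_0$ as the $s$-th smallest element of $X$, only the $\Gamma^E$--$\Gamma^E$ and $\Gamma^E$--$\Gamma^S$ sums are nonzero and yield $w_s(c)=m+2-2s$ and $m(c)=1$. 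In Case~B, writing $A(j_0):=\sum_{i<j_0}Y(i)$, $B(j_0):=\sum_{j>j_0}Y(j)$ and $\mu_{j_0}:=Y(j_0)$, only the $\Gamma^S$--$\Gamma^S$ and $\Gamma^E$--$\Gamma^S$ sums contribute and give $w_s(c)=A(j_0)-B(j_0)-k^E$ and $m(c)=[\mu_{j_0}]$.

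Summing the Case~A contributions over $s=1,\dots,k^E$ yields
\[
\sum_{s=1}^{k^E} q^{m+2-2s}=q^{k^S}[k^E],
\]
using $m+1=k^E+k^S$. Summing the Case~B contributions over $j_0$ with $Y(j_0)\geq 1$ gives
\[
q^{-k^E}\sum_{j_0}q^{A(j_0)-B(j_0)}[\mu_{j_0}]=q^{-k^E}[k^S],
\]
by Corollary~\ref{cor:sum-integers} applied to $(a_h)_h:=(Y(h))_h$. Adding the two case sums and applying Lemma~\ref{lem:add-integers} with the decomposition $k^E+k^S=m+1$ yields $q^{k^S}[k^E]+q^{-k^E}[k^S]=[m+1]$, independently of $c_0$. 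Since $\kup{\Upsilon,c_0}_{N|M}=1$, summing over all outer colorings $c_0$ produces the claimed identity.

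The one genuinely delicate step is the bookkeeping of the six $u$-sums at the split vertex: each is sensitive to which sub-MOY graph contains the exterior color $i_0$ or the unit of symmetric color $j_0$, and the sign conventions $u(\Gamma_1,\Gamma_2)=a_2b_1-a_1b_2$ must be tracked against the subscript ordering $\Gamma^\bullet_j$, $\Gamma^\bullet_i$ with $i<j$. Once the resulting formulas for $w_s(c)$ and $m(c)$ are in hand, the two $q$-identities Corollary~\ref{cor:sum-integers} and Lemma~\ref{lem:add-integers} close the argument directly.
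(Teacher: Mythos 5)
Your proof is correct and follows essentially the same route as the paper: fix the coloring of the through-strand, enumerate its extensions to the digon by which exterior or symmetric color occupies the label-$1$ edge, compute the local change in $m(c)$ and $w_s(c)$ at the single split vertex, and sum via Corollary~\ref{cor:sum-integers} and Lemma~\ref{lem:add-integers} to obtain $q^{k^S}[k^E]+q^{-k^E}[k^S]=[m+1]$. The only cosmetic difference is that the paper phrases the argument for two closed MOY graphs agreeing outside a small ball (so it verifies $w_\rho(c)=w_\rho(c')$ and $s(c)=s(c')$ rather than asserting these vanish for the open local pieces), which is the cleaner way to square your computation with Definition~\ref{dfn:weight}.
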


\begin{proof}
Let $\Gamma$ and $\Upsilon$ be two MOY graphs which are identical except in a small ball where they are related by the local relation (\ref{eq:easy-digon}), $\Gamma$ being on the left-hand side and $\Upsilon$ on the right-hand side.
  First note that any coloring of $\Gamma$ induces a coloring of $\Upsilon$. Let us fix a coloring $c'= (\underline{\Upsilon^E}, \underline{\Upsilon^S})$ of $\Upsilon$. If a coloring $c$ of $\Gamma$ induces $c'$ on $\Upsilon$, we write $c\to c'$. We will prove the following:
  \[
\sum_{\substack{c\to c' \\ c \in \col{\Gamma}}} \kup{\Gamma, c}_{N|M} = [m+1] \kup{\Upsilon, c'}_{N|M},
  \]
  which implies $\kup{\Gamma}_{N|M} = [m+1] \kup{\Upsilon}_{N|M}$.
  Denote by $e$ the edge of $\Upsilon$ which appears in the local relation. For $i$ in $\set{M}$ and $j$ in $\set{N}$, set
  \begin{align*}
    k^S_i := \ell_{\Upsilon_i^S}(e), \quad k^E_j := \ell_{\Upsilon_j^E}(e), \quad
    k^S := \sum_{i=1}^M k^S_i, \quad \textrm{and} \quad k^E := \sum_{j=1}^N k^E_j.
  \end{align*}

  A coloring $c= (\underline{\Gamma^E}, \underline{\Gamma^S})$ of $\Gamma$ inducing $c'$ is totally determined by saying which MOY graph of the collection $\underline{\Gamma^E}\sqcup \underline{\Gamma^S}$ gives label $1$ to the right edge of the digon which appears in the local relation. Denote this graph by $\Gamma^A_h$. There are two possibilities: either this graph belongs to $\underline{\Gamma^S}$ (\ie{}$A=S$ and $h\in \set{M}$) or to $\underline{\Gamma^E}$ (\ie{}$A=E$ and $h\in \set{N}$).
  If $A = S$ and  $h\in \set{M}$, one has:
  \begin{align*}
    m(c) &= m(c')[k_h^S], &&\quad& s(c) &= s(c'),\\
    w_\rho(c)&= w_\rho(c')&& \quad \textrm{and}\quad & 
    w_s(c) &= w_s (c') - \sum_{i=1}^{h-1} k_i^S + \sum_{i=h+1}^M k_i^S - k^E.
  \end{align*}
  If $A = E$ and $h \in \set{N}$, one has:
  \begin{align*}
    m(c) &= m(c')[k_h^E], 
    \\
    s(c) &= s(c'),\\
    w_\rho(c)&= w_\rho(c') \quad \textrm{and}\\
    w_s(c) &= w_s (c') + \sum_{i=1}^{h-1} k_i^E - \sum_{i=h+1}^N k_i^E + k^S.
  \end{align*}
  Note that for the coloring $c$ to exist, it is necessary that  $k_h^E=1$. However, if $k_h^E=0$, the previous formula makes the contribution of this ``virtual'' coloring equal to  $0$.
  This argument will be used throughout the proofs of this section. We will not repeat it.
  One can sum:
  \begin{align*}
    \sum_{\substack{c\to c' \\ c\in \col{\Gamma} }} \kup{\Gamma, c}_{N|M}
    &= \left(\sum_{h=1}^M q^{-k^E- \sum_{i=1}^{h-1} k_i^S + \sum_{i=h+1}^M k_i^S}[k_h^S] \right. \\
    &\hspace{2cm} \left. + \sum_{h=1}^N
      q^{k^S + \sum_{i=1}^{h-1} k_i^E - \sum_{i=h+1}^N k_i^E}[k_h^E] \right) \kup{\Upsilon, c'}_{N|M} \\
&= \left(q^{-k^E}[k^S] + q^{k^S}[k^E] \right)\kup{\Upsilon, c'}_{N|M} \\ &= [k^S+k^E]\kup{\Upsilon, c'}_{N|M} = [m+1]\kup{\Upsilon, c'}_{N|M}. \qedhere
  \end{align*}
\end{proof}
\begin{cor} \label{cor:easy-digon2}
  The combinatorial $(N|M)$-evaluation satisfies the following local identities and their mirror images:
  \begin{align*}
    \kup{\digonbone}_{N|M}&=  [m+1]\kup{\,\vertaone\!}_{N|M}, \\
    \kup{\digona}_{N|M}&=  \begin{bmatrix}  m+n \\ m \end{bmatrix}
    \kup{\,\verta\!}_{N|M},\\
    \kup{\squaree}_{N|M} &=  \begin{bmatrix}  r+s \\ r \end{bmatrix}
    \kup{\webHe}_{N|M}.
    \end{align*} 
\end{cor}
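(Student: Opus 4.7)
The three identities in Corollary \ref{cor:easy-digon2} all reduce to Lemma \ref{lem:easy-digon} combined with associativity (Lemma \ref{lem:assoc}).

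\textbf{First identity.} I would repeat the proof of Lemma \ref{lem:easy-digon} \emph{verbatim}, with the roles of the left and right thin edges of the digon exchanged. A coloring of \digonbone extending a coloring $c'$ of \vertaone is specified by the sub-MOY graph $\Gamma_h^A$ in $\underline{\Gamma^E}\sqcup\underline{\Gamma^S}$ that now assigns label $1$ to the \emph{left} thin edge. The multiplicity, parity and $\rho$-weight formulas carry over unchanged. The $u_s$-contributions at the bottom split and top merge vertices flip sign, so the summations over $h$ combine via Corollary \ref{cor:sum-integers} into $q^{k^E}[k^S] + q^{-k^S}[k^E]$, which equals $[m+1]$ by the alternate form of Lemma \ref{lem:add-integers}.

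\textbf{Second identity.} The plan is to mimic Lemma \ref{lem:easy-digon}, distributing $m+n$ strands instead of $m+1$. Fix a coloring $c'$ of \verta with color data $(k^E_j)_j$, $(k^S_i)_i$ on its single edge. An extension of $c'$ to \digona is encoded by, for each $j \in \set{N}$ with $k^E_j = 1$, a choice of ``left'' or ``right'' thin edge for its strand, together with integers $l_i^S \in \{0, \dots, k_i^S\}$ for each $i \in \set{M}$ giving the number of $i$-strands on the left thin edge, subject to the constraint that the total number of left strands equals $m$. The multiplicity picks up a factor $\prod_{i=1}^M \qbina{k_i^S}{l_i^S}$ from the bottom split vertex, while $s$ and $w_\rho$ are preserved. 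Summing $q^{w_s}$ over admissible configurations and iterating Proposition \ref{prop:dec-binomial} once per color collapses the total to $\qbina{m+n}{m}$. The main technical point is matching the $q$-shifts produced by $u_s$ at the two vertices with the shifts prescribed in Proposition \ref{prop:dec-binomial}.

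\textbf{Third identity.} The plan is to use associativity to expose an inner digon. On the left side of \squaree, the two successive splits $k+s \to k, s$ at the bottom and $k \to k-r, r$ at the top can be rewritten, via equation \eqref{eq:extrelass}, as a single split $k+s \to k-r, r+s$ followed by a secondary split $r+s \to r, s$. Applying the ``merge'' form of associativity (the second equation in Lemma \ref{lem:assoc}) on the right side converts the two corresponding merges into a merge $r, s \to r+s$ followed by a merge $l-s, r+s \to l+r$. The central region is then a horizontally-oriented digon with thin edges labeled $r$ and $s$ and thick edges of label $r+s$. The second identity of this corollary removes this inner digon at the cost of a factor $\qbina{r+s}{r}$, yielding \webHe.
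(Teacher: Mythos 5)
Your three arguments are all sound, but on the first two identities you take a noticeably different (and heavier) route than the paper. For the first identity the paper simply observes that \digonbone{} is the mirror image of \digonaone{} and invokes Lemma~\ref{prop:mirror-image} together with the symmetry of $[m+1]$ in $q\leftrightarrow q^{-1}$; your plan of rerunning the state-sum computation with left and right exchanged also works (the sign flip of $u_s$ under reflection is exactly what makes the two forms of Lemma~\ref{lem:add-integers} swap roles), but it buys nothing over the one-line mirror argument. For the second identity the paper does an easy induction on the thin labels, peeling off one strand at a time via Lemma~\ref{lem:easy-digon}, the first identity and associativity; you instead evaluate the digon in one shot, encoding an extension of $c'$ by a distribution of each color's strands between the two thin edges and invoking Proposition~\ref{prop:dec-binomial} iteratively. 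This does work, and it is arguably more illuminating since it exhibits $\qbinil{m+n}{m}$ directly as a weighted count of colorings, but the ``technical point'' you flag is real and you leave it unverified: the exponents $u_v(\Gamma_\beta,\Gamma_\alpha)=l_\alpha r_\beta-l_\beta r_\alpha=k_\alpha r_\beta - k_\beta r_\alpha$ match the shifts $n_\alpha k_\beta-n_\beta k_\alpha$ of Proposition~\ref{prop:dec-binomial} only after you order the colors compatibly with the signs in $w_s$ (the $S$-colors in their natural order followed by the $E$-colors in \emph{reversed} order, because of the minus sign in front of $\sum_{i<j}u(\Gamma^E_j,\Gamma^E_i)$); also only the split vertex contributes, not ``the two vertices'', since merge vertices enter neither $u$ nor $b$. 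Your third identity is exactly the paper's proof: re-associate the two splits and the two merges to expose an inner $(r,s)$-digon on an $(r+s)$-edge and remove it with the second identity.
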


\begin{proof}
  The first identity is a consequence of Lemma~\ref{lem:easy-digon} and Lemma~\ref{prop:mirror-image}. The second is an easy induction. The third is a consequence of the second and of Lemma~\ref{lem:assoc}.
\end{proof}

\begin{lem}
  \label{lem:bad-digon}
  The combinatorial $(N|M)$-evaluation satisfies the following local identity:
\begin{align} \label{eq:bad-digon}
   \kup{\digonbb}_{N|M} = [N-M-m]\kup{\,\vertb\!}_{N|M}.
 \end{align}
\end{lem}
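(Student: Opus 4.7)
The approach mirrors the strategy used for Lemma~\ref{lem:easy-digon}: fix a coloring $c'$ of the single edge $\Upsilon = \vertb$ and show, for each such $c'$, that
\[
\sum_{\substack{c\to c' \\ c\in \col{\Gamma}}} \kup{\Gamma, c}_{N|M} = [N-M-m]\,\kup{\Upsilon, c'}_{N|M},
\]
where $\Gamma = \digonbb$. Write $c' = (X,Y)$ with $|X| = m^E$ and $|Y| = m^S$, so $m = m^E + m^S$.

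First I would enumerate the colorings $c$ inducing $c'$. The flow conditions at the merge $V_1$ and split $V_2$ force the top and bottom edges of $\Gamma$ to carry the coloring $c'$, and a coloring $c$ is uniquely determined by the choice of the single color sitting on the right edge (label $1$). Since the left edge has label $m+1 = m + 1$, this single color is either an $E$-color $j_0 \in \set{N}\setminus X$ (Case~A, required by the cyclic constraint) or an $S$-color $i_0 \in \set{M}$ with arbitrary multiplicity in $Y$ (Case~B).

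Next I would record the local contributions to $s(c)$, $m(c)$, $w_\rho(c)$ and $w_s(c)$, all compared against the trivial local contributions of $\Upsilon$. The crucial topological observation is that the choice of right-edge color produces a new closed circle in the sub-MOY graph $\Gamma^E_{j_0}$ (Case~A) or $\Gamma^S_{i_0}$ (Case~B), formed by the left and right edges of the digon. Since the left edge is oriented upward on the west side and the right edge downward on the east side, this new circle is negatively oriented, so it contributes $-1$ to the rotational. From this I read off the change in $w_\rho$ as $2c_0 - N - M - 1$ (with $c_0 = j_0$ or $i_0$), while $s(c)$ increases by $1$ modulo~$2$ only in Case~B, giving an overall sign flip there. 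The binomial weight contribution from $V_2$ is trivial in Case~A and equals $[Y(i_0)+1]$ in Case~B (via the trivalent split with thick $Y(i_0)+1$ and right thin $1$). The $u_{V_2}$ computation is then a routine bookkeeping exercise, similar to the proof of Lemma~\ref{lem:easy-digon}.

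The main step is to sum the two cases and recognise the result as $[N-M-m]$. The Case~A sum runs over $j_0 \notin X$ and can be converted, using the substitution $n_k - \alpha(n_k) = k$ for the $k$-th element of $\set{N}\setminus X$, into $q^{\bullet}[N-m^E]$ times a known factor. The Case~B sum, which depends on $Y(i_0)$ and on $\beta(i_0) = \sum_{i<i_0} Y(i)$, is collapsed by applying Corollary~\ref{cor:sum-integers} to $[m^S]$ combined with the identity $[Y(i_0)+1] = q[Y(i_0)] + q^{-Y(i_0)}$ from Lemma~\ref{lem:add-integers}, producing a closed form $q^{\bullet}[M + m^S]$. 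Finally, I would subtract the two contributions and invoke Corollary~\ref{lem:diff-prod} (or the defining identity of the quantum integers) to collapse $q^{\bullet}[N-m^E] - q^{\bullet}[M+m^S]$ into $[N - M - m^E - m^S] = [N-M-m]$.

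The main obstacle I expect is the careful sign and convention tracking: identifying the left/right thin edges at the split vertex $V_2$ from the CCW cyclic order around $V_2$, determining the orientation (here negative) of the circle that appears in the sub-MOY graph, and applying Corollary~\ref{cor:sum-integers} with the correct sign convention so that the Case~A and Case~B sums combine into a single quantum integer. Once these bookkeeping details are in place, the telescoping is analogous to the one carried out in the proof of Lemma~\ref{lem:easy-digon}.
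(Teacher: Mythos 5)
Your plan follows essentially the same route as the paper's proof: fix a coloring $c'$ of the single strand $\Upsilon$, observe that a coloring of the digon inducing $c'$ is determined by the one color placed on the label-$1$ edge, split into the exterior case and the symmetric case, record the local changes of $m$, $s$, $w_\rho$, $w_s$, and collapse the two resulting sums via Corollary~\ref{cor:sum-integers} into $-q^{\bullet}[M+k^S]+q^{\bullet}[N-k^E]=[N-M-m]$. Your identifications of the multiplicity factor $[k^S_h+1]$, of the parity flip occurring only in the symmetric case, and the case division all agree with the paper; the only structural difference is that you exclude the colors $j_0\in X$ a priori, whereas the paper keeps them with the vanishing factor $[1-k_h^E]$ precisely so that Corollary~\ref{cor:sum-integers} applies over all of $\set{N}$ at once -- your substitute (summing $q^{N-k^E+1-2k}$ over the $N-k^E$ admissible colors) works but is an extra step.

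Two cautions. First, the entire content of this lemma is the exponent bookkeeping that your plan defers: every exponent is left as ``$q^{\bullet}$'' and the $u_{V_2}$ computation is called routine, so the plan as written is not yet a proof. Second, and more substantively, you assert that the new circle in $\Gamma^A_h$ is negatively oriented, so that $w_\rho$ changes by $-(N+M-2h+1)$; the paper's computation uses the increment $+(N+M-2h+1)$, and it is exactly this sign, combined with the signs of the $u$-contributions to $w_s$, that turns the exponent multiplying $[k_h^S+1]$ into $-\sum_{i<h}(k_i^S+1)+\sum_{i>h}(k_i^S+1)$ (up to a global $q^{N-k^E}$), which is what Corollary~\ref{cor:sum-integers} needs to produce $[M+k^S]$, and similarly $[N-k^E]$ in the other case. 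Flipping the sign of the $w_\rho$ increment without a compensating flip in the $w_s$ increments destroys this matching, and the two cases then fail to recombine into $[N-M-m]$ via Lemma~\ref{lem:add-integers}. So before trusting the telescoping you must write out all four increments explicitly under one fixed convention (orientation of the extra circle, identification of left versus right thin edge at the split vertex) and check that they assemble as above; this is the one place where the argument can silently go wrong.
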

  
\begin{proof}
Let $\Gamma$ and $\Upsilon$ be two MOY graphs which are identical except in a small ball where they are related by the local relation (\ref{eq:bad-digon}), $\Gamma$ being on the left-hand side and $\Upsilon$ on the right-hand side.
  First note that any coloring of $\Gamma$ induces a coloring of $\Upsilon$. Fix a coloring $c'= (\underline{\Upsilon^E}, \underline{\Upsilon^S})$ of $\Upsilon$. If a coloring $c$ of $\Gamma$ induces $c'$ on $\Upsilon$, we write $c\to c'$. We will prove the following:
  \[
\sum_{\substack{c\to c'\\c \in \col{\Gamma} }} \kup{\Gamma, c}_{N|M} = [N-M-m] \kup{\Upsilon, c'}_{N|M},
  \]
  which implies $\kup{\Gamma}_{N|M} = [N-M-m] \kup{\Upsilon}_{N|M}$.
  Denote by $e$ the edge of $\Upsilon$ which appears in the local relation. For $i$ in $\set{M}$ and $j$ in $\set{N}$, set
  \begin{align*}
    k^S_i := \ell_{\Upsilon_i^S}(e), \quad k^E_j := \ell_{\Upsilon_j^E}(e), \quad
    k^S := \sum_{i=1}^M k^S_i, \quad \textrm{and} \quad k^E := \sum_{i=1}^N k^E_i.
  \end{align*}

  A coloring $c= (\underline{\Gamma^S}, \underline{\Gamma^S})$ of $\Gamma$ inducing $c'$ is totally determined by saying which MOY graph of the collection $\underline{\Gamma^E}\sqcup \underline{\Gamma^S}$ gives label $1$ to the right edge of the digon which appears in the local relation. Denote this graph by $\Gamma^A_h$. There are two possibilities: either this graph belongs to $\underline{\Gamma^S}$ (\ie{}$A=S$ and $h\in \set{M}$) or to $\underline{\Gamma^E}$ (\ie{}$A=E$ and $h\in \set{N}$).
  If $A = S$ and  $h\in \set{M}$, one has:
  \begin{align*}
    m(c) &= m(c')[k_h^S+1], \\
    s(c) &= s(c') +1,\\
w_\rho(c) &= w_\rho(c') + N+M-2h +1 \quad \textrm{and}\\
    w_s(c) &= w_s(c')- \sum_{i=1}^{h-1} k_i^S + \sum_{i=h+1}^M k_i^S - k^E .
  \end{align*}
Hence:
\begin{align*}  
w_\rho(c) + w_s(c)&= w_\rho(c') + w_s(c') + N - k^E- \sum_{i=1}^{h-1} (k_i^S +1) + \sum_{i=h+1}^M (k_i^S + 1).
  \end{align*}
  If $A = E$ and $h \in \set{N}$, one has: 
  \begin{align*}
    m(c) &= m(c')[1-k_h^E] \quad \textrm{note that one has $k^E_h=0$ }, \\
    s(c) &= s(c'),\\
w_\rho(c)& = w_\rho(c') + N+M-2h +1 \quad \textrm{and}\\
    w_s(c) &= w_s(c')+ \sum_{i=1}^{h-1} k_i^E - \sum_{i=h+1}^N k_i^E + k^S .\\
  \end{align*}
Hence:
\begin{align*}  
  w_\rho(c) + w_s(c)&= w_\rho(c') + w_s(c') + M + k^S- \sum_{i=1}^{h-1} (1-k_i^E) + \sum_{i=h+1}^N (1-k_i^E ).
  \end{align*}
Finally, one gets:
  \begin{align*}
    \sum_{\substack{c\to c'\\ c\in \col{\Gamma}}} \kup{\Gamma, c}_{N|M} =&\left(
-\sum_{h=1}^M q^{N -k^E - \sum_{i=1}^{h-1} (k_i^S +1) + \sum_{i=h+1}^M (k_i^S + 1)}
[k_h^S+1] \right. \\ & \hspace{1cm}+ \left.
\sum_{h=1}^N q^{M + k^S -\sum_{i=1}^{h-1} (1-k_i^E) + \sum_{i=h+1}^N (1-k_i^E)}
[1-k_h^E] 
\right)
\kup{\Upsilon, c'}
\\ =& \left(-q^{N-k^E}[M+k^S] + q^{M+k^S}[N-k^E] \right)\kup{\Upsilon, c'} \\ =& [N-M -m]\kup{\Upsilon, c'} .
  \end{align*}
\end{proof}

\begin{cor} \label{cor:bad-digon2}
  The combinatorial $(N|M)$-evaluation satisfies the following local identities:
  \begin{align*}
    \kup{\digonbbb}_{N|M}&=  [N-M-m]\kup{\,\vertb\!}_{N|M}, \\
    \kup{\digonb}_{N|M}&=  \begin{bmatrix}  N-M-m \\ n \end{bmatrix}
      \kup{\,\vertb\!}_{N|M} = \kup{\digonbbbb}_{N|M}.
    \end{align*} 
\end{cor}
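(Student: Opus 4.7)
The first identity is the mirror image of Lemma~\ref{lem:bad-digon}: since $[N-M-m]$ is symmetric in $q\leftrightarrow q^{-1}$ and $\digonbbb$ is the left--right reflection of $\digonbb$, applying Lemma~\ref{prop:mirror-image} to Lemma~\ref{lem:bad-digon} immediately yields $\kup{\digonbbb}_{N|M}=[N-M-m]\kup{\vertb}_{N|M}$.

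For the central equality $\kup{\digonb}_{N|M}=\begin{bmatrix}N-M-m\\n\end{bmatrix}\kup{\vertb}_{N|M}$, I follow the template of the proof of Lemma~\ref{lem:bad-digon}. Let $\Gamma$ contain $\digonb$ and let $\Upsilon$ be obtained by replacing the digon by $\vertb$. Fix a coloring $c'$ of $\Upsilon$ and set $k_i^S=\ell_{\Upsilon_i^S}(e)$, $k_j^E=\ell_{\Upsilon_j^E}(e)$, $k^S=\sum_i k_i^S$, $k^E=\sum_j k_j^E$, so $k^S+k^E=m$. Colorings $c$ of $\Gamma$ inducing $c'$ are in bijection with pairs $(H_E,b)$, where $H_E\subseteq\set{N}\setminus c'^E$ records which cyclic sub-MOY graphs traverse the right $n$-edge and $b\colon\set{M}\to\ZZ_{\geq 0}$ records the multiplicities of the $\Gamma_i^S$ on the right edge, subject to $|H_E|+\sum_i b(i)=n$. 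The only new split vertex of $\Gamma$ relative to $\Upsilon$ is the top digon vertex $V_2$ (the merge vertex $V_1$ contributes to neither $b(\cdot)$ nor to $w_s$), and its contributions give $s(c)\equiv s(c')+\sum_i b(i)\pmod{2}$, $m(c)/m(c')=\prod_{i=1}^M\begin{bmatrix}k_i^S+b(i)\\b(i)\end{bmatrix}$, and an exponent $(w_s+w_\rho)(c)-(w_s+w_\rho)(c')$ that decomposes additively over $H_E$ and $b$.

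The sum therefore factors as $\sum_{h_E+h_S=n}S_E(h_E)\,S_S(h_S)$. Using the subset-degree formula from the last proposition of Section~\ref{sec:qi} applied to $\set{N}\setminus c'^E$, $S_E(h_E)$ simplifies to $q^{(\cdots)}\begin{bmatrix}N-k^E\\h_E\end{bmatrix}$. For $S_S(h_S)$, one identifies the product $\prod_i\begin{bmatrix}k_i^S+b(i)\\b(i)\end{bmatrix}$, suitably $q$-weighted by the exponents coming from $V_2$, with the $q$-weighted multi-subset count on the inflated ordered set $\bigsqcup_{i=1}^M\set{k_i^S+1}$ of cardinality $M+k^S$ (where all positions for index $i$ precede those for $i+1$), yielding $q^{(\cdots)}\begin{bmatrix}-(M+k^S)\\h_S\end{bmatrix}$. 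Proposition~\ref{prop:dec-binomial} applied with $n_1=N-k^E$ and $n_2=-(M+k^S)$ then assembles the double sum into $\begin{bmatrix}N-k^E-M-k^S\\n\end{bmatrix}=\begin{bmatrix}N-M-m\\n\end{bmatrix}$, as required.

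The final equality $\kup{\digonb}_{N|M}=\kup{\digonbbbb}_{N|M}$ follows by Lemma~\ref{prop:mirror-image} applied to the central equality just proved, since its right-hand side $\begin{bmatrix}N-M-m\\n\end{bmatrix}\kup{\vertb}_{N|M}$ is invariant under $q\leftrightarrow q^{-1}$. The main technical obstacle is the bookkeeping in $S_S(h_S)$: one must check that the combined $q$-exponents from $w_s$, $w_\rho$, and the $E$-$S$ cross term $-k^E h_S$ exactly match the degree function on the inflated multi-subset, up to a global $q$-shift that combines cleanly with the one from $S_E(h_E)$ to produce the symmetric term $q^{n_1 h_S-n_2 h_E}$ appearing in Proposition~\ref{prop:dec-binomial}.
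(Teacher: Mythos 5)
Your handling of the first identity (mirror image of Lemma~\ref{lem:bad-digon} via Lemma~\ref{prop:mirror-image}) and of the final equality agrees with the paper's. For the central identity you take a genuinely different route. The paper disposes of it as an easy induction on $n$: replace the $n$-labelled edge of the digon by a $(1,n-1)$-digon at the cost of a factor $[n]^{-1}$ (Corollary~\ref{cor:easy-digon2}), reassociate using Lemma~\ref{lem:assoc}, evaluate the outer $1$-labelled digon by Lemma~\ref{lem:bad-digon} to get $[N-M-m-n+1]$ and the inner $(n-1)$-digon by the induction hypothesis; the recursion $[n]\qbinil{N-M-m}{n}=[N-M-m-n+1]\qbinil{N-M-m}{n-1}$ closes the argument. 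You instead recompute the evaluation from scratch as a state sum over colorings of the digon, splitting according to how many exterior versus symmetric colors traverse the $n$-edge and reassembling via the subset/multi-subset degree proposition of Section~\ref{sec:qi} together with Proposition~\ref{prop:dec-binomial} applied to $n_1=N-k^E$ and $n_2=-(M+k^S)$. Your combinatorial setup is correct: the bijection with pairs $(H_E,b)$, the observation that only the top (split) vertex contributes to $m$ and $w_s$, the parity shift by $\sum_i b(i)$, and the multiplicity $\prod_i\qbinil{k_i^S+b(i)}{b(i)}$ all check out, and the deferred exponent bookkeeping is of exactly the same type as in the proofs of Lemmas~\ref{lem:circles}, \ref{lem:easy-digon} and~\ref{lem:bad-digon} (the cross-terms of $w_s$ and the $(N+M-2h+1)$ terms of $w_\rho$ combine into the relevant degree functions plus the prefactor $q^{n_1h_S-n_2h_E}$), so it will go through. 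What your approach buys is a self-contained verification that does not lean on the inductive scaffolding of the earlier local relations; what it costs is precisely that unwritten bookkeeping, which is the bulk of the work and which the paper's one-line induction avoids entirely.
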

\begin{proof}
  The first identity is a consequence of Lemma~\ref{lem:bad-digon} and Lemma~\ref{prop:mirror-image}. The second is an easy induction.
\end{proof}

\begin{lem}
  \label{lem:easy-square}
    The combinatorial $(N|M)$-evaluation satisfies the following local identity:
\begin{align} \label{eq:easy-square}
  \kup{\squareccc}_{N|M} = [k]\kup{\twovertddd}_{N|M}
  +  \kup{\squareddd}_{N|M}.
 \end{align}
\end{lem}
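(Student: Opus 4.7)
The plan is to adapt the strategy of Lemmas~\ref{lem:easy-digon} and~\ref{lem:bad-digon}: denote by $\Gamma$, $\Delta$, and $\Upsilon$ the three MOY graphs \squareccc, \twovertddd, and \squareddd respectively. They agree outside a common local ball $B$, so I fix a coloring $c'$ of the exterior and aim to prove the refined identity
\[
\sum_{\substack{c\in \col{\Gamma} \\ c\to c'}} \kup{\Gamma, c}_{N|M}
= [k] \sum_{\substack{d\in \col{\Delta}\\ d\to c'}} \kup{\Delta, d}_{N|M}
+ \sum_{\substack{e\in \col{\Upsilon}\\ e\to c'}} \kup{\Upsilon, e}_{N|M}.
\]
Summing over $c'$ then yields the lemma. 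Via Remark~\ref{rmk:col2multiset}, a coloring $c$ of $\Gamma$ extending $c'$ is uniquely determined by the single strand $x$ on the horizontal edge $e_1=C_2C_1$ and the single strand $y$ on the horizontal edge $e_2=D_1D_2$; flow conservation at the interior vertices forces $c'(T_1)-c'(B_1)=\{x\}-\{y\}$ as a signed multiset identity.

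The argument splits into two cases. In \emph{Case~(A)}, $x=y$, so $c'(T_1)=c'(B_1)$ and $c'(T_2)=c'(B_2)$: this is exactly the boundary condition allowing a coloring of $\Delta$. For each admissible transferred strand~$x$---an E-strand in $c'(B_2)\setminus c'(B_1)$, or an S-strand in the support of $c'(B_2)$---I would compute the local contributions to $m$, $s$, $w_\rho$ and $w_s$ exactly as in Lemmas~\ref{lem:easy-digon}--\ref{lem:bad-digon}. Summing over $x$ and telescoping by Corollary~\ref{cor:sum-integers} should collapse the inner sum to $[k]\kup{\Delta,d}_{N|M}$, the factor $[k]=[|c'(B_2)|-|c'(B_1)|]$ reflecting the difference of labels on the vertical edges. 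In \emph{Case~(B)}, $x\neq y$: the boundary relation $c'(T_1)-c'(B_1)=\{x\}-\{y\}$ is exactly the condition for the existence of a coloring $e$ of $\Upsilon$ whose reversed horizontal edges $C_1C_2$ and $D_2D_1$ carry the strands $y$ and $x$ respectively. This yields a bijection between Case~(B) colorings of $\Gamma$ and colorings of $\Upsilon$ extending $c'$, and I would verify that $\kup{\Gamma,c}_{N|M}=\kup{\Upsilon,e}_{N|M}$ under this bijection.

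The main obstacle lies in Case~(B). The four interior vertices of $\Gamma$ are of types (merge, split, split, merge) whereas those of $\Upsilon$ are (split, merge, merge, split), so the sums~$w_s$ and the binomial factors in~$m$ are indexed by different vertices. A careful vertex-by-vertex comparison is needed to check that the local differences cancel; I expect the identity $[m+k][n]-[m][n+k]=[k][n-m]$ of Corollary~\ref{lem:diff-prod} to reconcile the binomial contributions in $m(c)$ versus $m(e)$ with the extra $q$-powers coming from $w_s$ and $w_\rho$. The Case~(A) telescoping, on the contrary, is essentially identical to the sums performed in the proofs of Lemmas~\ref{lem:easy-digon} and~\ref{lem:bad-digon}.
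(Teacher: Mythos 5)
Your target refined identity (fix the exterior coloring $c'$ and compare the three local sums) is the right one, and your Case~(B) correspondence between colorings of $\Gamma=\squareccc$ giving different colors to the two rungs and colorings of $\Upsilon=\squareddd$ giving different colors to \emph{its} two rungs is exactly the first half of the paper's argument --- and it is easier than you anticipate: under that correspondence $m$, $s$, $w_s$ and $w_\rho$ are all literally equal, vertex types notwithstanding, so no reconciliation via Corollary~\ref{lem:diff-prod} is needed there. The genuine gap is in Case~(A), and in how the two cases are matched against the two terms on the right. Your Case~(B) map only reaches the colorings of $\squareddd$ whose two rungs carry \emph{different} colors; the colorings of $\squareddd$ giving the \emph{same} color to both rungs (which exist precisely when $c'(T_1)=c'(B_1)$, i.e.\ precisely when your Case~(A) is nonempty) are counted on the right-hand side of \eqref{eq:easy-square} but appear nowhere in your decomposition. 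Correspondingly, the sum over same-color colorings of $\squareccc$ does \emph{not} telescope to $[k]\kup{\Delta,d}_{N|M}$ on its own: for a same-color choice $h\in\set{M}$ the local multiplicity is $[l^S_h+1][r^S_h]$ on the left and $[l^S_h][r^S_h+1]$ for the matching same-color coloring of $\squareddd$, and it is the difference
\[
[l^S_h+1][r^S_h]-[l^S_h][r^S_h+1]=[r^S_h-l^S_h]
\]
(Corollary~\ref{lem:diff-prod}) that sums to $[k]$ after weighting by the appropriate powers of $q$; the first term alone does not (already $[2][2]\neq[1]$ for $l^S_h=1$, $r^S_h=2$, $k=1$).

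The repair is to restate Case~(A) as a \emph{difference}: writing $\Delta=\twovertddd$ and $d$ for the unique coloring of $\Delta$ extending $c'$,
\[
\sum_{\substack{c\to c'\\ x=y}}\kup{\Gamma,c}_{N|M}\;-\;\sum_{\substack{e\to c'\\ e\ \text{same-color on both rungs}}}\kup{\Upsilon,e}_{N|M}\;=\;[k]\,\kup{\Delta,d}_{N|M},
\]
which together with the Case~(B) matching of different-color colorings gives the lemma. With that correction your plan coincides with the paper's proof; the place where Corollary~\ref{lem:diff-prod} actually does its work is this same-color comparison, not the different-color one.
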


\begin{proof}
  Let $\Gamma$, $\Upsilon$ and $\Phi$ be three MOY graphs which are identical except in a small ball $B$ where they are related by the local relation (\ref{eq:easy-square}), $\Gamma$ being on the left-hand side and $\Upsilon$ and $\Phi$ on the right-hand side ($\Upsilon$ the first and $\Phi$ the second).
  Let $c$ be a coloring of $\Gamma$. There are two possibilities: either the two rungs receive the same color or they receive different colors.
  First consider a coloring $c=(\underline{\Gamma^E}, \underline{\Gamma^S})$ which gives different colors to the two rungs.
  There is no coloring of $\Upsilon$ which coincides with $c$ outside $B$. On the other hand, it induces a unique coloring $c''=(\underline{\Phi^E}, \underline{\Phi^S})$ of $\Phi$ which associates different colors to the two rungs of $\Phi$. One has:
   \begin{align*}
    m(c) &= m(c''), && \quad\quad & 
    s(c) &= s(c''),\\
w_\rho(c) &= w_\rho(c'') &&\quad \textrm{and}& 
    w_s(c) &= w_s(c'').
  \end{align*}
  Hence
  \begin{align}\label{eq:easysquare-firstcase}
\kup{\Gamma, c}_{N|M} = \kup{\Phi,c''}_{N|M} 
\end{align}

   Consider now the case where $c$ gives the same color to the two rungs. It induces a coloring $c'$ of $\Upsilon$. Similarly there are colorings of $\Phi$ which induce $c'$ on $\Upsilon$. Let us write $c\to c'$ to indicate that $c$ induces the coloring $c'$ on $\Upsilon$. We will show the following:
  \begin{align*}
&    \sum_{\substack{c\in \col{\Gamma} \\ c\to c'}}
\kup{\Gamma, c}_{N|M}
- 
    \sum_{\substack{c\in \col{\Phi} \\ c\to c'}}
\kup{\Phi, c''}_{N|M}
= [k] \kup{\Upsilon, c'}_{N|M}
  \end{align*}
  which together with (\ref{eq:easysquare-firstcase}) implies the lemma. 
  
  Fix $c'= (\underline{\Upsilon^E}, \underline{\Upsilon^S})$ a coloring of $\Upsilon$. Let us denote by $e_l$ (\resp{}$e_r$) the vertical edge on the left (\resp{}right) of the part of $\Upsilon$ which is in $B$. We need a few extra notations. For $i \in \set{M}$ and $j \in \set{N}$, set:
  \begin{align*}
    l_i^S &:= \ell_{\Gamma_i^S}(e_l), &\quad \quad&& r_i^S &:= \ell_{\Gamma_i^S}(e_r),
    &\quad \quad&& l_j^E &:= \ell_{\Gamma_j^E}(e_l), &\quad \quad&& r_j^E &:= \ell_{\Gamma_j^E}(e_r), \\
    l^S &:= \sum_{i=1}^Ml_i^S, &\quad \quad&& r^S &:= \sum_{i=1}^Mr_i^S, 
&\quad \quad&&    l^E &:= \sum_{j=1}^Nl_j^E, &\quad \quad&&
r^E &:= \sum_{j=1}^Nr_j^E.
  \end{align*}
Note that $k=r^E+ r^S- l^E-l^S$.
  A coloring of $\Gamma$ which induces $c'$ is totally determined by the color it gives to the two rungs. Let $c= (\underline{\Gamma^E}, \underline{\Gamma^S})$ and  denote by $\Gamma^A_h$ the sub-MOY graph of $\Gamma$ which has label $1$ on the two rungs. There are two possibilities: either $A=S$ and $h \in \set{M}$ or $A=E$ and $h \in \set{N}$.
  In the case $A=S$ and $h \in \set{M}$, one has:
   \begin{align*}
    m(c) &= m(c') [l^S_h+1][r_h ^S] \\
    s(c) &= s(c'),\\
w_\rho(c) &= w_\rho(c') \quad \textrm{and}\\
    w_s(c) &= w_s(c') + r^E - l^E + \sum_{i=1}^{h-1} (l^S_i - r^S_i) - \sum_{i=h+1}^{M} (l^S_i - r^S_i).
  \end{align*}

  In the case $A=E$ and $h \in \set{N}$, one has:
   \begin{align*}
    m(c) &= m(c') [1-l^E_h][r_h ^E] \\
    s(c) &= s(c'),\\
w_\rho(c) &= w_\rho(c') \quad \textrm{and}\\
    w_s(c) &= w_s(c') + l^S - r^S + \sum_{i=1}^{h-1} (r^E_i - l^E_i) - \sum_{i=h+1}^{N} (r^E_i - l^E_i).
  \end{align*}
  Note that in order $c$ to exist it is necessary and sufficient that $r_h^E=1$ and $l^E_h=0$. 

  We perform the same analysis with a coloring $c'' = (\underline{\Phi^E}, \underline{\Phi^S})$ inducing $c'$ on $\Upsilon$. Denote by $\Phi^A_h$ the sub-MOY graph of $\Phi$ which has label $1$ on the two rungs. There are two possibilities: either $A=S$ and $h \in \set{M}$ or $A=E$ and $h \in \set{N}$.
  In the case $A=S$ and $h \in \set{M}$, one has:
     \begin{align*}
    m(c'') &= m(c') [l^S_h][r_h ^S+1] \\
    s(c'') &= s(c'),\\
w_\rho(c'') &= w_\rho(c') \quad \textrm{and}\\
    w_s(c'') &= w_s(c') + r^E - l^E + \sum_{i=1}^{h-1} (l^S_i - r^S_i) - \sum_{i=h+1}^{M} (l^S_i - r^S_i).
  \end{align*}
In the case $A=E$ and $h \in \set{N}$, one has:
   \begin{align*}
    m(c'') &= m(c') [l^E_h][1-r_h ^S] \\
    s(c'') &= s(c'),\\
w_\rho(c'') &= w_\rho(c') \quad \textrm{and}\\
    w_s(c'') &= w_s(c') + l_S - r_S + \sum_{i=1}^{h-1} (r^E_i - l^E_i) - \sum_{i=1}^{N} (r^E_i - l^E_i).
  \end{align*}
  We compute with help of Lemma~\ref{lem:diff-prod}:
   \begin{align*}
&
\sum_{\substack{c\in \col{\Gamma}\\ c\to c' }} 
\kup{\Gamma, c}_{N|M} 
-
\sum_{\substack{c''\in \col{\Phi}\\ c''\to c' }} 
\kup{\Phi, c''}_{N|M} \\
 &=\left(q^{r^E - l^{E}}\sum_{h=1}^M ([l^S_h+1][r_h ^S] -[l^S_h][r^S_h+1])q^{\sum_{i=1}^{h-1} (l^S_i - r^S_i) - \sum_{i=h+1}^{M} (l^S_i - r^S_i)} \right. \\
&\left. \hspace{0.5cm} + q^{l^S - r^{S}}\sum_{h=1}^N ([1-l^E_h][r_h ^E] -[l^E_h][1-r^E_h])q^{\sum_{i=1}^{h-1} (r^E_i - l^E_i) - \sum_{i=h+1}^{N} (r^E_i - l^E_i)} \right) 
\kup{\Upsilon, c'}_{N|M}
\\
& =\left(q^{r^E - l^{E}}\sum_{h=1}^M [r_h^S- l_h^S]q^{\sum_{i=1}^{h-1} (l^S_i - r^S_i) - \sum_{i=h+1}^{M} (l^S_i - r^S_i)} \right. \\
&\left. \hspace{2cm} + q^{l^S - r^{S}}\sum_{h=1}^N [r_h^E - l_h^E]q^{\sum_{i=1}^{h-1} (r^E_i - l^E_i) - \sum_{i=h+1}^{N} (r^E_i - l^E_i)} \right) \kup{\Upsilon, c'}_{N|M} \\
&=\left( q^{r^E - l^{E}} [r^S - l^S] + q^{l^S - r^{S}}[r^E - l^E]\right)\kup{\Upsilon, c'}_{N|M}\\
&=[k] \kup{\Upsilon, c'}_{N|M}.\qedhere
\end{align*}
  \end{proof}
\begin{cor}\label{cor:gen-square}
  The combinatorial $(N|M)$-evaluation satisfies the following local identity and its mirror image:
  \[
  \kup{\squarec}= \sum_{j=\max{(0,m-n)}}^l\begin{bmatrix}l \\ k-j \end{bmatrix} \kup{\squared}.
\]
\end{cor}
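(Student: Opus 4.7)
The plan is to proceed by induction on $k$, using Lemma~\ref{lem:easy-square} (which handles the case of width-$1$ rungs in a small special configuration) as the engine, and promoting it to the general case by repeatedly peeling off one unit of rung width from the thick bottom rung of $\squarec$.

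For the base case I take $k = \max(0, m-n)$, the smallest value for which $\squarec$ is a valid MOY diagram. In this range, either the top rung (label $n+k-m$) or the bottom rung (label $k$) has label $0$ and can be erased. What remains is a digon, whose value can be computed directly using Corollary~\ref{cor:easy-digon2} or Corollary~\ref{cor:bad-digon2} together with Lemma~\ref{lem:assoc}. On the right-hand side of the claimed identity, only the term $j = \max(0, m-n)$ gives a potentially nonzero contribution (the binomial $\begin{bmatrix} l \\ k-j \end{bmatrix}$ vanishes outside the range $k-l \le j \le k$, and $\squared$ is itself degenerate otherwise), and I verify that this term matches the digon evaluation.

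For the inductive step, I assume the identity for $k-1$ and deduce it for $k$. I apply associativity (Lemma~\ref{lem:assoc}) to split the bottom rung of $\squarec$ of label $k$ as a width-$1$ rung merged with a width-$(k-1)$ rung. Lemma~\ref{lem:easy-square} is then applied to reverse the direction of the width-$1$ piece, producing two terms: one in which the width-$1$ rung is removed (leaving an easier diagram that reduces, by a digon move, to a $\squarec$-type diagram with parameter $k-1$, to which the induction hypothesis applies), and one in which it is reversed (producing, after a second application of associativity at the top rung, a sum of $\squared$-type diagrams indexed by $j$). Collecting the resulting quantum-integer coefficients from Lemma~\ref{lem:easy-square} with the binomial prefactors already present by induction, and reassembling them using the Pascal-type recursion of Lemma~\ref{lem:pascal} together with Proposition~\ref{prop:dec-binomial}, yields the desired binomials $\begin{bmatrix} l \\ k-j \end{bmatrix}$ on the right-hand side.

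The main obstacle will be the bookkeeping of labels, $q$-powers, and summation ranges as the inductive step is iterated: all the internal labels of $\squarec$ shift when $k$ changes by $1$, and the index $j$ on the right-hand side must be re-indexed accordingly. The most delicate point is to verify that ``boundary'' contributions (those where a quantum binomial vanishes because its lower index exceeds its upper, or where a label would become negative) truly cancel, so that the sum in the induction hypothesis matches term-by-term with the sum produced by Lemma~\ref{lem:easy-square}. The mirror-image symmetry Lemma~\ref{prop:mirror-image} should simultaneously handle the mirror image statement mentioned in the corollary.
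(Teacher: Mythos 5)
Your proposal takes essentially the same route as the paper, whose entire proof reads ``This is an easy induction using Lemma~\ref{lem:easy-square} and Corollary~\ref{cor:easy-digon2}''; your outline is just that induction spelled out, with the mirror image disposed of via Lemma~\ref{prop:mirror-image} exactly as the paper does for its other corollaries. No discrepancy to report.
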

\begin{proof}
  This is an easy induction using Lemma~\ref{lem:easy-square} and Corollary~\ref{cor:easy-digon2}.
\end{proof}

\begin{lem}
  \label{lem:badsquare}
    The combinatorial $(N|M)$-evaluation satisfies the following local identity:
\begin{align} \label{eq:bad-square}
  \kup{\squaremm}_{N|M} =
 [N-M-2m]\kup{\twovertmm}_{N|M} +
  \kup{\squaremmm}_{N|M}. 
 \end{align}
\end{lem}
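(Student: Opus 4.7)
The proof follows the strategy of Lemma~\ref{lem:easy-square}, splitting colorings of $\Gamma := \squaremm$ according to whether the top and bottom rungs (both of label $1$) carry the same elementary color or distinct ones. Denote by $\Upsilon := \twovertmm$ and $\Phi := \squaremmm$ the two MOY graphs on the right-hand side.

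\emph{Different colors on the rungs.} Suppose the top rung carries color $\alpha$ and the bottom rung carries color $\beta$ with $\alpha\neq\beta$. The flow conditions at the four vertices of the square force $\alpha$ to appear on the left vertical strand below the square and $\beta$ to appear on the right vertical strand above the square, after which the coloring is completely determined by the data outside the square together with $(\alpha,\beta)$. This yields a canonical bijection with colorings of $\Phi$ for which the top and bottom rungs carry $(\beta,\alpha)$, and one checks, exactly as in the proof of Lemma~\ref{lem:easy-square}, that corresponding colorings have equal multiplicity $m$, parity $s$, and weights $w_s$ and $w_\rho$, hence equal combinatorial evaluations.

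\emph{Same color on both rungs.} Fix a coloring $c'$ of $\Upsilon$ and use the notation $l^E_j, l^S_i, r^E_j, r^S_i$ for the sub-MOY labels on the left and right strands of $\Upsilon$, with sums $l^E, l^S, r^E, r^S$ satisfying $l^E+l^S=r^E+r^S=m$. A coloring of $\Gamma$ inducing $c'$ with common rung color $\alpha$ exists for every symmetric color $\alpha=h\in\set{M}$, and for every exterior color $\alpha=h\in\set{N}$ with $l^E_h=r^E_h=0$ (other exterior choices violate cyclicity of $\Gamma^E_h$, but their formal contributions vanish automatically by the virtual coloring convention used in the proof of Lemma~\ref{lem:easy-digon}). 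The four new edges of the square form a clockwise rectangular loop when added to $\Gamma^A_h$, so $\rho(\Gamma^A_h)=\rho(\Upsilon^A_h)-1$; this shifts $w_\rho$ by $-(N+M-2h+1)$ and flips the parity $s$ in the symmetric case $A=S$. The multiplicity factor $m$ acquires a new binomial weight at each of the two split vertices inside the square (trivially equal to $1$ when $A=E$), and $w_s$ picks up contributions from the interactions between $\Gamma^A_h$ and every other sub-MOY graph at those vertices.

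\emph{Summation.} Gathering these contributions and applying Corollary~\ref{cor:sum-integers} to telescope the sum over $h$ separately in the exterior and symmetric cases, as in the proofs of Lemmas~\ref{lem:easy-digon} and \ref{lem:bad-digon}, one obtains a total of the form $\bigl(q^{M+l^S+r^S}[N-l^E-r^E]-q^{N-l^E-r^E}[M+l^S+r^S]\bigr)\kup{\Upsilon,c'}_{N|M}$. The identity $-q^a[b]+q^b[a]=[a-b]$ used in the proof of Lemma~\ref{lem:bad-digon}, combined with $l^E+l^S=r^E+r^S=m$, then yields the coefficient $[N-M-2m]$. The main obstacle is the bookkeeping of $w_s$: one must check that the pairwise interactions of $\Gamma^A_h$ with every other sub-MOY graph at the two new split vertices, together with the rotational shift, conspire to produce exactly the $q$-prefactors $M+l^S+r^S$ and $N-l^E-r^E$ required for the telescoping. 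This is a mechanical but delicate generalization of the corresponding computations in Lemmas~\ref{lem:bad-digon} and \ref{lem:easy-square}.
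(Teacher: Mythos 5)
Your proposal is correct and takes essentially the same route as the paper's proof: the same dichotomy on whether the two rungs carry equal or distinct colors, the same evaluation-preserving bijection with colorings of $\Phi$ in the distinct-color case, and the same telescoping over the common color $h$ in the equal-color case. Two remarks on the part you defer: first, the equal-color colorings of $\Phi$ must themselves be enumerated and subtracted from those of $\Gamma$ for each fixed $h$ \emph{before} telescoping --- the resulting differences $[l^S_h][r^S_h]-[l^S_h+1][r^S_h+1]=-[l^S_h+r^S_h+1]$ and $[1-l^E_h][1-r^E_h]-[l^E_h][r^E_h]=[1-l^E_h-r^E_h]$ are precisely the quantum integers to which Corollary~\ref{cor:sum-integers} is then applied, so this step cannot be left implicit; second, your rotational shift $\rho(\Gamma^A_h)=\rho(\Upsilon^A_h)-1$ has the opposite sign to the shift $+(N+M-2h+1)$ displayed in the paper's computation, an orientation-convention discrepancy (it propagates to the signs in $w_s$ as well) that is immaterial here because the final coefficient $[N-M-2m]$ is symmetric in $q$ and $q^{-1}$.
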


\begin{proof}
  Let $\Gamma$, $\Upsilon$ and $\Phi$ be three MOY graphs which are identical except in a small ball $B$ where they are related by the local relation (\ref{eq:bad-square}), $\Gamma$ being on the left-hand side and $\Upsilon$ and $\Phi$ on the right-hand side ($\Upsilon$ the first and $\Phi$ the second).
  Let $c$ be a coloring of $\Gamma$. There are two possibilities: either the two rungs receive the different colors or the receive different colors.

  First, consider a coloring $c=(\underline{\Gamma^E}, \underline{\Gamma^S})$ which gives different colors to the two rungs.
   There is no coloring of $\Upsilon$ which coincides with $c$ outside $B$. On the other hand, it induces a unique coloring $c'=(\underline{\Phi^E}, \underline{\Phi^S})$ of $\Phi$ which associates different colors to the two rungs of $\Phi$. One has:
   \begin{align*}
    m(c) &= m(c'), && \quad \quad & 
    s(c) &= s(c'),\\
w_\rho(c) &= w_\rho(c') && \quad \textrm{and} & 
    w_s(c) &= w_s(c').
  \end{align*}
  Hence
  \begin{align}\label{eq:badsquare-firstcase}
\kup{\Gamma,c}_{N|M}=  (-1)^{s(c)}q^{w_\rho(c) + w_s(c)}m(c) = (-1)^{s(c')}q^{w_\rho(c') + w_s(c)}m(c')= \kup{\Phi,c'}_{N|M}.
\end{align}

Consider now the case where $c$ gives the same color to the two rungs. It induces a coloring $c'$ of $\Upsilon$. Similarly there are colorings of $\Phi$ which induce $c'$ on $\Upsilon$. Let us write $c\to c'$ to indicate that $c$ induces the coloring $c'$ on $\Upsilon$. We will show the following: 
  \begin{align*}
    \sum_{\substack{c\in \col{\Gamma} \\ c\to c'}} \kup{\Gamma, c}_{N|M}
-
    \sum_{\substack{c\in \col{\Phi} \\ c''\to c'}} \kup{\Phi, c''}_{N|M}
=
[N-M-2m] \kup{\Upsilon, c'}_{N|M}
  \end{align*}
  which together with (\ref{eq:badsquare-firstcase}) implies the lemma.

Let us fix a coloring $c'= (\underline{\Upsilon^E}, \underline{\Upsilon^S})$ of $\Upsilon$. and denote by $e_l$ (\resp{}$e_r$) the vertical edge on the left (\resp{}right) of the part of $\Upsilon$ which is in $B$. We need a few extra notations. For $i \in \set{M}$ and $j \in \set{N}$, set:
  \begin{align*}
    l_i^S &:= \ell_{\Gamma_i^S}(e_l), &\quad \quad&& r_i^S &:= \ell_{\Gamma_i^S}(e_r), &\quad \quad&&
    l_j^E &:= \ell_{\Gamma_j^E}(e_l), &\quad \quad&& r_j^E &:= \ell_{\Gamma_j^E}(e_r), \\
    l^S &:= \sum_{i=1}^Ml_i^S, &\quad \quad&& r^S &:= \sum_{i=1}^Mr_i^S, &\quad \quad&&
    l^E &:= \sum_{j=1}^Nl_j^E, &\quad \quad&&
r^E &:= \sum_{j=1}^Nr_j^E.
  \end{align*}
  A coloring of $\Gamma$ inducing $c'$ is totally determined by the color it gives to the two rungs. Let $c= (\underline{\Gamma^E}, \underline{\Gamma^S})$ and denote by $\Gamma^A_h$ the sub-MOY graph of $\Gamma$ which has label $1$ on the two rungs. There are two possibilities: either $A=S$ and $h \in \set{M}$ or $A=E$ and $h \in \set{N}$.
  In the case $A=S$ and $h \in \set{M}$, one has:
   \begin{align*}
    m(c) &= m(c') [l^S_h+1][r_h^S + 1] \\
    s(c) &= s(c') -1 ,\\
w_\rho(c) &= w_\rho(c') + (N+M -2h +1) \quad \textrm{and}\\
    w_s(c) &= w_s(c') - r^E - l^E - \sum_{i=1}^{h-1} (l^S_i + r^S_i) + \sum_{i=h+1}^{M} (l^S_i + r^S_i).
  \end{align*}
  In the case $A=E$ and $h \in \set{N}$, one has:
   \begin{align*}
    m(c) &= m(c') [1-l^E_h][1-r_h ^E] \\
    s(c) &= s(c'),\\
w_\rho(c) &= w_\rho(c')  + (N+M -2h +1)\quad \textrm{and}\\
    w_s(c) &= w_s(c') + l^S + r^S + \sum_{i=1}^{h-1} (r^E_i + l^E_i) - \sum_{i=1}^{N} (r^E_i + l^E_i).
  \end{align*}
  Note that in order $c$ to exist it is necessary and sufficient that $r_h^E=l_h^E=1$. 
  
   We perform the same analysis with a coloring $c'' = (\underline{\Phi^E}, \underline{\Phi^S})$ inducing $c'$ on $\Upsilon$. Denote by $\Phi^A_h$ the sub-MOY graph of $\Phi$ which has label $1$ on the two rungs. There are two possibilities: either $A=S$ and $h \in \set{M}$ or $A=E$ and $h \in \set{N}$.
  In the case $A=S$ and $h \in \set{M}$, one has:
     \begin{align*}
    m(c'') &= m(c') [l^S_h][r_h ^S] \\
    s(c'') &\equiv s(c') + 1,\\
w_\rho(c'') &= w_\rho(c') + (N+M-2h+1) \quad \textrm{and}\\
    w_s(c'') &= w_s(c') - r^E - l^E - \sum_{i=1}^{h-1} (l^S_i + r^S_i)+ \sum_{i=h+1}^{M} (l^S_i + r^S_i).
  \end{align*}
Note that in order $c''$ to exist it is necessary and sufficient for $r_h^S$ and $l_h^S$ to be positive. 
  In the case $A=E$ and $h \in \set{N}$, we have:
   \begin{align*}
    m(c'') &= m(c') [l^E_h][r_h ^E] \\
    s(c'') &\equiv  s(c') ,\\
w_\rho(c'') &= w_\rho(c') + (N+M-2h+1)\quad \textrm{and}\\
    w_s(c'') &= w_s(c') + l^S + r^S + \sum_{i=1}^{h-1} (r^E_i + l^E_i) - \sum_{i=1}^{N} (r^E_i + l^E_i).
  \end{align*}
  Hence, one has:
   \begin{align*}
&    \sum_{\substack{c\in \col{\Gamma} \\ c\to c'}} \kup{\Gamma, c}_{N|M}
-
    \sum_{\substack{c\in \col{\Phi} \\ c''\to c'}} \kup{\Phi, c''}_{N|M}
\\
=& \left(\sum_{i=1}^M \left([l^S_h][r_h ^S] - [l^S_h+1][r_h ^S+1] \right) 
q^{N+M-2h +1 - r^E - l^E - \sum_{i=1}^{h-1} (l^S_i + r^S_i) + \sum_{i=h+1}^{M} (l^S_i + r^S_i)} \right. \\
&+ \left. \sum_{i=1}^N \left([1-l^E_h][1-r_h ^E] - [l^E_h][r_h ^E]  \right) 
q^{N+M-2h +1 + r^S + l^S + \sum_{i=1}^{h-1} (l^E_i + r^E_i) - \sum_{i=h+1}^{N} (l^E_i + r^E_i)} \right) \kup{\Upsilon, c'}_{N|M}\\
=&\left(\sum_{i=1}^M \left(-[l^S_h+r_h^S+1] \right) 
q^{N+M-2h +1 - r^E - l^E - \sum_{i=1}^{h-1} (l^S_i + r^S_i) + \sum_{i=h+1}^{M} (l^S_i + r^S_i)} \right. \\
&+ \left. \sum_{i=1}^N \left([1-l^E_h-r_h ^E]\right) 
q^{N+M-2h +1 + r^S + l^S + \sum_{i=1}^{h-1} (l^E_i + r^E_i) - \sum_{i=h+1}^{N} (l^E_i + r^E_i)} \right) \kup{\Upsilon, c'}_{N|M}\\
=& \left(\sum_{i=1}^M \left(-[l^S_h+r_h^S+1] \right) 
q^{N - r^E - l^E - \sum_{i=1}^{h-1} (l^S_i + r^S_i+1) + \sum_{i=h+1}^{M} (l^S_i + r^S_i+1)} \right. \\
&+ \left. \sum_{i=1}^N \left([1-l^E_h-r_h ^E]\right) 
q^{M + r^S + l^S + \sum_{i=1}^{h-1} (l^E_i + r^E_i-1) - \sum_{i=h+1}^{N} (l^E_i + r^E_i-1)} \right) \kup{\Upsilon, c'}_{N|M}\\
=& \left(-[M+ l^S + r^S] q^{N- l^E - r^E} + [N - l^E - r^E]q^{M + l^S +r^S} \right)  \kup{\Upsilon, c'}_{N|M}\\
=& [N-M - 2m] \kup{\Upsilon, c'}_{N|M}.\qedhere
\end{align*}
\end{proof}

\section{Link invariants}
\label{sec:link-invariants}
The aim of this section is to define link invariants using the combinatorics worked out in the previous section. The definition are really close from \cite{MR1659228}. The main point here is to fix normalization.

\begin{dfn}
  \label{dfn:colored-diag}
  A \emph{labeled} link diagram is an oriented link diagram whose components are labeled by non-negative integers. If $D$ is a labeled link diagram, denote by $\Xing(D)$ (or simply $\Xing$) the set of crossings of $D$. A diagram is \emph{unlabeled} or \emph{trivially labeled} if all its components are labeled by $1$. 
  For each crossing $x$ in $\Xing$, define $k_x$ and $e_x$ by the following formula:
  \begin{align*}
k_x &= 
\begin{cases}
  m(N-M-m +1) & \textrm{if $x$ is positive and the two strands have label $m$,} \\
  -m(N-M-m +1) & \textrm{if $x$ is negative and the two strands have label $m$,} \\
  0 & \textrm{else;}
\end{cases} \\
e_x &=
\begin{cases}
  m & \textrm{if the two strands of $x$ have label $m$,} \\
  0 & \textrm{else.}
\end{cases} 
\end{align*}
Finally, define $k(D)$ (\resp{}$e(D)$) to be the sum of the $k_x$ (\resp{}$e_x$) for all $x$ in $\Xing(D)$.
\end{dfn}

\begin{dfn}
  Let $D$ be a labeled link diagram. The $(N|M)$-evaluation of $D$ is the Laurent polynomial in $q$ with integral coefficients denoted by $\kup{D}_{N|M}$ defined by the two following local relations:
\begin{align}
\label{eq:extcrossplus}\kup{\scriptstyle{\NB{\tikz[scale=0.6]{\begin{scope}
  \draw[->] (1, -1) -- (-1, 1) node[near end, left] {\tiny{${m}$}};
  \fill[white] (0,0) circle (2mm);
  \draw[->] (-1, -1) -- (1, 1) node[near end, right] {\tiny{${n}$}};
\end{scope}}}}}_{N|M} &= \sum_{k= \max(0, m-n)}^m (-1)^{m-k}q^{k-m}\kup{\!\!\NB{\tikz[scale=0.9]{\begin{scope}
\coordinate (A) at (-1,-1);
\coordinate (B) at (1,-1);
\coordinate (C) at (1,1);
\coordinate (D) at (-1,1);
\coordinate (a) at (-.5,-.5);
\coordinate (b) at (.5,-.5);
\coordinate (c) at (.5,.5);
\coordinate (d) at (-.5,.5);
\draw[->] (A) -- (a) node[at start, below] {\tiny{$n$}};
\draw[->] (c) -- (C) node[at end, above ] {\tiny{$n$}};
\draw[->] (B) -- (b) node[at start , below ] {\tiny{$m$}};
\draw[->] (d) -- (D) node[at end, above] {\tiny{$m$}};
\draw[<-] (c) -- (d) node[midway, above] {\tiny{$n+k-m$}};
\draw[<-] (a) -- (b) node[midway, below] {\tiny{$k$}};
\draw[->] (a) -- (d) node[midway, left] {\tiny{$n+k$}};
\draw[->] (b) -- (c) node[midway, right] {\tiny{$m-k$}};
\end{scope}
 }}\!\!}_{N|M},\\
\label{eq:extcrossminus}
\kup{\scriptstyle{\NB{\tikz[scale=0.6]{\begin{scope}
  \draw[->] (-1, -1) -- (1, 1) node[near end, right] {\tiny{${n}$}};
  \fill[white] (0,0) circle (2mm);
  \draw[->] (1, -1) -- (-1, 1) node[near end, left] {\tiny{${m}$}};
\end{scope}}}}}_{N|M} &= \sum_{k= \max(0, m-n)}^m (-1)^{m-k}q^{m-k} \kup{\!\!\NB{\tikz[scale=0.9]{\begin{scope}
\coordinate (A) at (-1,-1);
\coordinate (B) at (1,-1);
\coordinate (C) at (1,1);
\coordinate (D) at (-1,1);
\coordinate (a) at (-.5,-.5);
\coordinate (b) at (.5,-.5);
\coordinate (c) at (.5,.5);
\coordinate (d) at (-.5,.5);
\draw[->] (A) -- (a) node[at start, below] {\tiny{$n$}};
\draw[->] (c) -- (C) node[at end, above ] {\tiny{$n$}};
\draw[->] (B) -- (b) node[at start , below ] {\tiny{$m$}};
\draw[->] (d) -- (D) node[at end, above] {\tiny{$m$}};
\draw[<-] (c) -- (d) node[midway, above] {\tiny{$n+k-m$}};
\draw[<-] (a) -- (b) node[midway, below] {\tiny{$k$}};
\draw[->] (a) -- (d) node[midway, left] {\tiny{$n+k$}};
\draw[->] (b) -- (c) node[midway, right] {\tiny{$m-k$}};
\end{scope}
 }}\!\!}_{N|M}.
\end{align}
\end{dfn}

\begin{rmk}
  In order to compute $\kup{D}_{N|M}$ one first expresses the diagram $D$ as a linear combination of MOY graphs and then uses the definition of $\kup{\bullet}_{N|M}$ for graphs given in Definition~\ref{dfn:weight}. 
\end{rmk}

\begin{thm}
  \label{thm:link-invariant}
  For any non-negative integers $M$ and $N$, the Laurent polynomial $P_{N|M}(D) :=(-1)^{e(D)}q^{k(D)}\kup{D}_{N|M}$ depends only on the oriented labeled link represented by $D$.
\end{thm}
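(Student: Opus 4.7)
The plan is to verify invariance of $P_{N|M}$ under the three oriented Reidemeister moves R1, R2 and R3. Planar isotopy is not an issue since $\kup{\Gamma}_{N|M}$ depends only on the combinatorial structure of the MOY graph $\Gamma$, and the normalization $(-1)^{e(D)}q^{k(D)}$ is manifestly a sum of local contributions indexed by the crossings of $D$; thus only the three local moves need to be checked. Because the prefactor is local at each crossing, its variation under any Reidemeister move can be computed by inspection, and the task reduces to matching this variation against the corresponding variation of $\kup{D}_{N|M}$.

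For R2, expanding the two oppositely signed crossings by (\ref{eq:extcrossplus}) and (\ref{eq:extcrossminus}) writes $\kup{\cdot}_{N|M}$ of the R2-tangle as a double sum of MOY resolutions. Each summand is a square web of the type appearing in Corollary~\ref{cor:gen-square}; applying that corollary together with the digon reductions of Corollaries~\ref{cor:easy-digon2} and~\ref{cor:bad-digon2} telescopes the double sum to the value of the two parallel strands. The two R2 crossings have opposite signs and equal $e_x$, so $(-1)^{e(D)}$ picks up $(-1)^{2m}=1$ and $q^{k(D)}$ is unchanged; hence $P_{N|M}$ is R2-invariant.

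For R3, the same sort of expansion is carried out on both sides of the move and the resulting MOY linear combinations are identified via Lemma~\ref{lem:easy-square}, Corollary~\ref{cor:gen-square}, and the associativity Lemma~\ref{lem:assoc}. The three crossings on each side have matching types, so the prefactor is automatically preserved and the whole content of R3-invariance reduces to the equality of two MOY sums. This bookkeeping-heavy step, closely parallel to the argument of Murakami--Ohtsuki--Yamada in \cite{MR1659228}, is where the main technical obstacle lies.

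For R1, resolve a positive kink on an $m$-colored strand via (\ref{eq:extcrossplus}) with $n=m$ and apply the digon identities of Corollaries~\ref{cor:easy-digon2} and~\ref{cor:bad-digon2}. A direct computation yields
\[ \kup{\text{positive $m$-kink}}_{N|M} = (-1)^m q^{-m(N-M-m+1)}\,\kup{\text{straight $m$-strand}}_{N|M}. \]
Introducing the positive kink increases $e(D)$ by $m$ and $k(D)$ by $m(N-M-m+1)$, so the prefactor $(-1)^{e(D)}q^{k(D)}$ is multiplied by $(-1)^m q^{m(N-M-m+1)}$, which exactly cancels the above scalar. The negative kink case is symmetric using (\ref{eq:extcrossminus}). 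Taken together, these three checks establish that $P_{N|M}$ only depends on the oriented labeled link represented by $D$.
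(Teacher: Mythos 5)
Your proposal is correct and follows essentially the same route as the paper: the paper's own proof is a two-line reduction to Reidemeister invariance, citing the identities of Section~2 and deferring the derivation (the crossing expansions, digon and square reductions, and the cancellation of the kink factor $(-1)^m q^{-m(N-M-m+1)}$ against the normalization $(-1)^{e(D)}q^{k(D)}$) to Murakami--Ohtsuki--Yamada, which is precisely the computation you sketch. Your R1 bookkeeping matches the paper's definitions of $e_x$ and $k_x$, and your acknowledged deferral of the R3 web identities is no less complete than the paper's own citation of \cite{MR1659228}.
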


\begin{proof}
  It is enough to check invariance under Reidemeister moves. This follows from the various identities satisfied by the $(N|M)$-evaluation given in Section~\ref{sec:colorings-moy-graphs}. The way to deduce invariance from these identities is given in~\cite{MR1659228}.
\end{proof}

\begin{rmk}
  From Corollary~\ref{cor:depends-N-M}, one deduces that for any link $L$, the polynomial $P_{N|M}(L)$ depends only on $L$ and $N-M$.
\end{rmk}

\begin{prop}
  The polynomial $P_{N|M}$ satisfies the following skein relation:
  \[
q^{M-N}    
P_{N|M}\left(
\crossposone
\right) -      
q^{N-M}    
P_{N|M}\left(
\crossnegone
\right) =      
(q^{-1}-q)
P_{N|M}\left(
\twovertsmoothone
\right).      
\]
\end{prop}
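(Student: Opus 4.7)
The plan is to substitute the definitions of $P_{N|M}$ together with the crossing resolution formulas \eqref{eq:extcrossplus} and \eqref{eq:extcrossminus} specialized to $m=n=1$, and then to combine the results. No MOY-graph identities beyond these definitions are needed.

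First I would specialize \eqref{eq:extcrossplus} to $m=n=1$. The sum runs over $k\in\{0,1\}$. For $k=0$ the square in the right-hand side has its horizontal rungs labeled $0$ and its two vertical edges labeled $1$, so (after deleting $0$-labeled edges) it is exactly $\twovertsmoothone$. For $k=1$ the horizontal rungs carry label $1$, the middle vertical edge carries label $2$ and the right vertical edge is $0$-labeled, so the graph is $\dumbleone$. The coefficients are $-q^{-1}$ and $1$ respectively, yielding
\[
\kup{\crossposone}_{N|M} \;=\; -q^{-1}\kup{\twovertsmoothone}_{N|M} + \kup{\dumbleone}_{N|M}.
\]
An identical computation with \eqref{eq:extcrossminus} gives
\[
\kup{\crossnegone}_{N|M} \;=\; -q\,\kup{\twovertsmoothone}_{N|M} + \kup{\dumbleone}_{N|M}.
\]
Subtracting the two yields $\kup{\crossnegone}_{N|M}-\kup{\crossposone}_{N|M}=(q^{-1}-q)\kup{\twovertsmoothone}_{N|M}$.

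Next I would bring in the normalizing factor $(-1)^{e(D)}q^{k(D)}$. Let $D_+$, $D_-$, $D_0$ be three link diagrams agreeing outside a small disk where they coincide respectively with $\crossposone$, $\crossnegone$ and $\twovertsmoothone$. For a crossing whose two strands are both labeled $1$ one reads off from Definition~\ref{dfn:colored-diag} that $e_x=1$ and $k_x=\pm(N-M)$ (positive or negative sign according to the sign of the crossing), while the smoothing contributes nothing from this location. All other crossings contribute identical factors to $(-1)^{e(\cdot)}q^{k(\cdot)}$ for $D_+$, $D_-$ and $D_0$, so after cancelling that common prefactor the asserted skein relation reduces to
\[
-q^{M-N}\cdot q^{N-M}\kup{D_+}_{N|M} \;+\; q^{N-M}\cdot q^{M-N}\kup{D_-}_{N|M} \;=\; (q^{-1}-q)\kup{D_0}_{N|M},
\]
that is, $\kup{D_-}_{N|M}-\kup{D_+}_{N|M}=(q^{-1}-q)\kup{D_0}_{N|M}$, which is precisely what was just established.

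This is a direct bookkeeping proof; I do not anticipate any real obstacle. The only point deserving a little care is tracking the two sign contributions — one from $(-1)^{e_x}=-1$ at the crossing and one from the coefficient $(-1)^{m-k}$ in the resolution — and verifying that the powers of $q$ coming from $k_x=\pm(N-M)$ in $P_{N|M}$ precisely cancel the prefactors $q^{M-N}$ and $q^{N-M}$ on the left-hand side of the skein relation, leaving the clean identity above.
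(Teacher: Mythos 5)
Your proposal is correct and follows the same route as the paper: specialize the crossing resolutions \eqref{eq:extcrossplus} and \eqref{eq:extcrossminus} at $m=n=1$ to obtain $\kup{\crossposone}_{N|M}=\kup{\dumbleone}_{N|M}-q^{-1}\kup{\twovertsmoothone}_{N|M}$ and $\kup{\crossnegone}_{N|M}=\kup{\dumbleone}_{N|M}-q\,\kup{\twovertsmoothone}_{N|M}$, subtract, and then absorb the normalization $(-1)^{e(D)}q^{k(D)}$ with $e_x=1$ and $k_x=\pm(N-M)$ at the relevant crossing. Your write-up simply makes explicit the sign and $q$-power bookkeeping that the paper leaves implicit.
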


\begin{proof}
  This follows from the local definition of $\kup{\bullet}_{N|M}$ on unlabeled crossings: simpler
    \begin{align*}
      \kup{\crossposone}_{N|M}  &= \kup{\dumbleone}_{N|M} -q^{-1}\kup{\twovertsmoothone}_{N|M}, \\
      \kup{\crossnegone}_{N|M}  &=  \kup{\dumbleone}_{N|M}  - q\kup{\twovertsmoothone}_{N|M} .
    \end{align*}
  Taking in account the contribution of $k(D)$ and $(-1)^{e(D)}$ in the definition of $P_{N|M}$ gives the identity.
\end{proof}

\begin{rmk}
  The connection between $U_q(\gll_{N|M})$-link invariants for links labeled by an arbitrary partition $\lambda$ and generic versions of them known as colored HOMFLY-PT polynomials was explored by Queffelec and Sartori \cite{2015arXiv150603329Q}. We refer the reader to this paper for a detailed account. In particular, they show 
  that such invariants depend only on $N-M$ and the partition $\lambda$. The present paper provides an alternative proof for this well-known fact for partitions which are rows.
\end{rmk}

\section{Non semi-simple invariants}
\label{sec:non-semi-simple}

In this part we suppose that $1\leq M\leq N$. Let us consider a link $L$ and suppose that one of its component has label $n$ with $n >N-M$. One can show that in this case the $\gll_{N|M}$-invariant is equal to $0$ (see for instance \cite{2015arXiv150603329Q}. This follows from the fact that the $\gll_{N|M}$-invariant of the unknot labeled $n$ is equal to $\left[\begin{smallmatrix} M-N \\n\end{smallmatrix}\right]$ which happens to be $0$. The aim of this part is to  directly re-normalize the $\gll_{N|M}$ invariant in this case in order to get a non-trivial invariant. The theory of renormalized invariants is nowadays well developed but we refer to one the early paper by Geer and Patureau-Mirand which treats the case we are looking at \cite{MR2640994} and also to the treatment by Queffelec and Sartori \cite{2015arXiv150603329Q}. The interested reader could also consult the paper by Geer, Patureau-Mirand and Turaev \cite{MR2480500}.

Here, we focus on the case $M=1$ and $n=N$. Incidentally, the re-normalized $\gll_{1|1}$-invariant for links uniformly colored by $1$ equals the (one-variable) Alexander polynomial.  Hence, this construction can be thought of a generalization of the Alexander polynomial. Let uspoint out that these invariants do not coincide with the colored Alexander polynomials (or ADO invariants) \cite{MR1164114}.

\begin{dfn}
  \label{dfn:markedMOY}
  A \emph{marked MOY graph $\Gamma_\star$} is a MOY graph $\Gamma$ with a base point $\star$ in the interior of one of its edges. If the marked point is on an edge of label $k$, we say that $\Gamma_\star$ has \emph{type $k$.} 
\end{dfn}

\begin{dfn}
  \label{dfn:eval-marked}
  Let $\Gamma_\star$ be a marked MOY graph and denote $e$ the edge of $\Gamma$ which contains the base point. A \emph{$(N|1)$-coloring} of $\Gamma_\star$ is a $(N|1)$-coloring $(\underline{\Gamma^E},\underline{\Gamma^S})$ of the underlying $\Gamma$ such that for all $j$ in $\set{N}$, $\ell_{\Gamma^E_j}(e)=0$. The set of $(N|1)$-coloring of $\Gamma_\star$ is denoted by $\col[N|1]{\Gamma_\star}$ (or simply $\col{\Gamma_\star}$). 

   If $\Gamma_\star$ is a marked MOY graph and $c$ is a $(N|1)$-coloring of $\Gamma_\star$,  define $\kup{\Gamma_\star, c}_{N|1}:= \kup{\Gamma, c}_{N|1}$, and:
\[
\kup{\Gamma_\star}_{N|1} = \sum_{c\in \col[N|1]{\Gamma_\star}} \kup{\Gamma_\star,c}_{N|1}. 
\]
\end{dfn}

\begin{rmk}
  \begin{enumerate}
  \item In the previous definition, $\underline{\Gamma^S}$ contains
    exactly one sub-MOY graph: $\Gamma^S_1$. One has necessarily
    $\ell_{\Gamma^S_1}(e) = \ell_{\Gamma}(e)$.
  \item Suppose that $\Gamma_\star$ is a marked circle of label $k$ with $k\geq N$, and denote by $\Gamma$ the underlying not marked circle. One has:
    \[\kup{\Gamma_\star}_{N|1} = 1 \quad \textrm{and} \quad \kup{\Gamma}_{N|1} = 0.\]
  \end{enumerate}
\end{rmk}

The results of the previous section extend naturally:

\begin{prop}
  \label{prop:marked-far-skein-relations}
  The local relations given in Proposition~\ref{prop:rel-kups} are still valid for the evaluation of marked MOY graphs (provided the marked point is not in the ball where the relations take place).
\end{prop}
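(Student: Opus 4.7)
The plan is to reduce the statement directly to the proofs already carried out in Section~\ref{sec:colorings-moy-graphs}. Inspection of those proofs (Lemmas~\ref{lem:easy-digon}, \ref{lem:bad-digon}, \ref{lem:easy-square}, \ref{lem:badsquare}, and their corollaries) reveals that each local relation was established by the following schematic strategy: given two (or more) MOY graphs $\Gamma,\Upsilon,\dots$ coinciding outside a small ball $B$, one fixes a coloring $c'$ of $\Upsilon$ (which amounts to specifying the coloring data on the complement of $B$), and then shows an identity of the form
\[
\sum_{\substack{c\in \col{\Gamma}\\ c\to c'}} \kup{\Gamma,c}_{N|M} \;-\; \sum_{\substack{c''\in \col{\Phi}\\ c''\to c'}} \kup{\Phi,c''}_{N|M} \;=\; (\text{local factor})\cdot \kup{\Upsilon,c'}_{N|M},
\]
where the local factor depends only on the labels carried by $c'$ on the (finitely many) edges crossing $\partial B$. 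Summing over all $c'$ then yields the desired global identity.

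First, I would observe that the marking constraint of Definition~\ref{dfn:eval-marked}, namely that $\ell_{\Gamma^E_j}(e_\star)=0$ for all $j\in\set{N}$ on the distinguished edge $e_\star$, is purely a restriction on the coloring at a single edge $e_\star$. Since by hypothesis $e_\star$ lies outside the ball $B$ in which the local move takes place, the marking condition is a condition on $c'$ alone, not on the finitely many ``interior'' colorings $c$ (resp. $c''$) appearing in the local identity above. In particular, for each $c'$ the set $\{c:c\to c'\}$ of interior extensions of $c'$ is unaffected by whether or not $c'$ satisfies the marking condition, and the pointwise local identity remains true verbatim.

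Second, I would restrict the outer summation to those $c'$ satisfying the marking condition. Doing so on both sides and using the pointwise identity yields
\[
\sum_{c\in \col{\Gamma_\star}} \kup{\Gamma_\star,c}_{N|M} - \sum_{c''\in \col{\Phi_\star}} \kup{\Phi_\star,c''}_{N|M} = (\text{local factor})\cdot \kup{\Upsilon_\star,c'}_{N|M}\text{ summed over marked }c',
\]
which is precisely the marked analogue of the corresponding local relation. Exactly the same argument applies to the associativity identities of Lemma~\ref{lem:assoc} (where the correspondence between colorings is a bona fide bijection, and the marking constraint is trivially preserved) and to the disjoint union formula of Lemma~\ref{lem:disjointunion}, provided the marked edge lies in only one factor. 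There is therefore no genuine obstacle: the whole content of the statement is the observation that the proofs in Section~\ref{sec:colorings-moy-graphs} are \emph{local}, and that the marking condition is a condition on the \emph{global complement} of the region of the move.

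The only point worth checking is that the local factor itself (such as $[m+1]$, $[N-M-m]$, $[k]$, or $[N-M-2m]$) does not secretly depend on whether some exterior edge carries an ``electric'' or ``symmetric'' label; an examination of the expressions derived in Section~\ref{sec:colorings-moy-graphs} shows that these factors depend only on the labels $\ell(e)$ of the edges crossing $\partial B$ via the quantities $l^E,l^S,r^E,r^S,k^E,k^S$, which are summed in a way that is manifestly insensitive to the marking condition on the complementary region. This concludes the verification.
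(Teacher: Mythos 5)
Your argument is correct, and it matches the paper's (implicit) justification: the paper states this proposition without proof, asserting only that the results of Section~\ref{sec:colorings-moy-graphs} ``extend naturally,'' and the intended reason is exactly the locality you identify --- every local relation is proved there by fixing the coloring data $c'$ outside the ball and establishing a pointwise identity, so restricting the outer sum to colorings satisfying the marking condition on the exterior edge $e_\star$ preserves each identity verbatim. Your additional checks (bijectivity for Lemma~\ref{lem:assoc}, the disjoint-union case, and the independence of the local factors from the marking) are the right ones and complete the verification.
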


\begin{dfn} 
  \label{dfn:1n-invariant} 
  Let $\beta$ be a braid diagram, define $\kup{\beta_\star}_{N|1}$ by the following procedure: 
  \begin{enumerate}
  \item Label all strands of $\beta$ by $N$.
  \item Place a marked point on the left-most strand at the bottom of $\beta$.
  \item Close $\beta$ on the right. 
  \item Use formulas (\ref{eq:extcrossplus}) and (\ref{eq:extcrossminus}) to get rid of crossings.
  \item Evaluate the obtained marked MOY graphs with $\kup{\bullet}_{N|1}$ as given in Definition~\ref{dfn:eval-marked}.
  \end{enumerate}
    As in Section~\ref{sec:link-invariants}, $\kup{\bullet}_{N|1}$ needs to be normalized to obtain a link invariant. Define $Q_{N|1}(\beta) = (-1)^{N|\beta|}\kup{\bullet}_{N|1}$, where $|\beta|$ is the number of crossings of $\beta$.

\end{dfn}

\begin{rmk}
  Besides the appearances, the normalization used for $Q_{N|1}$ is the same as for $P_{N|1}$ (see Definition~\ref{dfn:colored-diag}) but is especially simple for the present choice of labels.
\end{rmk}

\begin{prop}
  \label{prop:invariance-1n}
  The Laurent polynomial $Q_{N|1}(\beta)$ depends only on the link represented by $\beta$. 
\end{prop}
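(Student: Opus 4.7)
The plan is to apply Markov's theorem: two braids have isotopic closures if and only if they differ by a sequence of conjugations $\beta \mapsto \alpha\beta\alpha^{-1}$ within a fixed braid group $B_n$ and (positive or negative) stabilizations $\beta \in B_n \leftrightarrow \beta\sigma_n^{\pm 1} \in B_{n+1}$. It therefore suffices to verify invariance of $Q_{N|1}$ under these two moves.

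As a preliminary step, I would prove a sliding lemma stating that $\kup{\Gamma_\star}_{N|1}$ is independent of the position of the marked point within its component. Sliding the mark along a single edge of constant label is immediate from Definition~\ref{dfn:eval-marked}. Sliding across a trivalent vertex can be implemented by inserting and then contracting a small digon via Corollaries~\ref{cor:easy-digon2} and~\ref{cor:bad-digon2}, applied in a ball disjoint from the final position of the mark, which is legitimate by Proposition~\ref{prop:marked-far-skein-relations}.

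For conjugation, the closures of $\beta$ and $\alpha\beta\alpha^{-1}$ are related by a planar isotopy realizable as a sequence of Reidemeister II and III moves away from the marked point, after possibly repositioning the mark via the sliding lemma. Proposition~\ref{prop:marked-far-skein-relations} ensures that the skein identities used in the proof of Theorem~\ref{thm:link-invariant} to establish invariance of $P_{N|1}$ under these moves also apply to $\kup{\bullet}_{N|1}$ in the marked setting. Since $|\alpha\beta\alpha^{-1}| \equiv |\beta| \pmod 2$, the normalization factor $(-1)^{N|\cdot|}$ is preserved.

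For stabilization $\beta \mapsto \beta\sigma_n^{\pm 1}$, the new crossing lies on the two rightmost strands, far from the marked point. I would resolve it via (\ref{eq:extcrossplus}) or (\ref{eq:extcrossminus}) and then close off the new strand on the right, producing in each summand a digon formed by the rightmost strand of $\hat{\beta}$ together with the closure arc. With $M=1$ and both strand labels equal to $N$, one has $N-M-m = -1$, so by Corollary~\ref{cor:bad-digon2} each digon evaluates to $\qbinil{-1}{k} = (-1)^k$ times the unstabilized diagram. The alternating signs and $q$-powers appearing in the crossing resolution combine with these factors; the expected outcome is a single scalar factor of $(-1)^N$, which is exactly absorbed by the additional $(-1)^N$ in $(-1)^{N|\beta\sigma_n|}$. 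Negative stabilization is handled symmetrically using (\ref{eq:extcrossminus}). The main obstacle will be this stabilization calculation: I will need to carefully track signs, parity contributions, and $q$-powers through both the crossing resolution and the digon evaluation, and verify that they collapse to exactly $(-1)^N$ with no residual $q$-factor, so that the normalization in Definition~\ref{dfn:1n-invariant} (which carries no $q$-writhe correction) indeed yields a genuine link invariant.
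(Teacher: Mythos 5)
Your overall skeleton (Markov's theorem, plus Proposition~\ref{prop:marked-far-skein-relations} for every move supported in a ball away from the mark) matches the paper's, and your stabilization sketch does come out right: closing the new strand turns the $k$-th term of the crossing resolution into a nested pair of digons contributing $\qbinil{N-1-k}{N-k}\cdot\qbinil{-1}{k}$, the first factor kills every term with $k<N$, and the surviving $k=N$ term contributes exactly $(-1)^N$ with no residual $q$-power. The genuine gap is your ``sliding lemma'', which is precisely the point where the paper has to work. First, the statement you formulate is not the one you need: conjugation by $\sigma_1$ transports the mark across a crossing from the incoming bottom-left leg to the outgoing top-left leg, and these can lie on \emph{different components} of the closed link (already for $\widehat{\sigma_1^2}$), so independence of the position \emph{within its component} would not suffice even if established. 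Second, your proof sketch of the lemma does not work. Moving the mark across a trivalent vertex genuinely changes the set of admissible colorings: if the mark sits on the thick edge $e$, the condition $\ell_{\Gamma^E_j}(e)=0$ together with flow conservation forces both thin edges to be entirely $S$-colored, whereas marking one thin edge leaves the other free to carry $E$-colors. The two marked evaluations are therefore sums over different coloring sets, and their equality requires an actual cancellation argument. ``Inserting and then contracting a small digon'' cannot supply it: Proposition~\ref{prop:marked-far-skein-relations} only licenses the local relations in balls \emph{not} containing the mark, and a digon relation applied on the marked edge itself is exactly what is not available.

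The paper closes this gap by direct computation: it reduces the complement of the $\sigma_1$-crossing to a spanning set of ladder tangles, writes the two marked closures as the same linear combination of explicit marked graphs $\Gamma^{ij}_\star$ and $\Phi^{ij}_\star$ (same underlying graph, mark in the two relevant positions), evaluates each as a product of four quantum binomials via the marked skein relations, and observes that the two products coincide because they are exchanged by $i\leftrightarrow j$. Some such verification --- or an honest proof of a mark-moving statement strong enough to cross both vertices and crossings, and to change components --- is indispensable here; it is the combinatorial counterpart of the ambidexterity underlying the Geer--Patureau-Mirand renormalization, and it is not a formal consequence of the relations you invoke.
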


\begin{notation} \label{not:1n-link}
  If a link $L$ can be obtained by closing a braid $\beta$, set $Q_{N|1}(L):= Q_{N|1}(\beta)$.
  
\end{notation}

\begin{rmk} 
The invariant $Q_{N|1}$ vanishes on split links.
\end{rmk}

\begin{proof}[Proof of Proposition~\ref{prop:invariance-1n}]
  We need to prove invariance under 
  \begin{itemize}
  \item Braid relations.
  \item Markov moves.
  \end{itemize}
Invariance under braid relations and stabilization follows from the general setting see \cite{MR1659228}. Invariance under conjugation by $\sigma_i$ for $i\geq 2$ is obvious. Hence the only thing to show is invariance under conjugation by $\sigma_1$. 
On the closure of $\beta$ it only changes the base point in the following way:
\[
\NB{\tikz[scale=0.6]{\begin{scope}
  \begin{scope}
    \draw[->] (-1,-1) -- (+1,+1) node[pos = 0.25, red] {${\star}$}; 
    \fill[white] (0,0) circle (2mm);
    \draw[->] (+1,-1) -- (-1,+1);
  \end{scope}
  \node at (3, 0) {$\leftrightsquigarrow$};
  \begin{scope}[xshift = 6cm]
    \draw[->] (-1,-1) -- (+1,+1);
    \fill[white] (0,0) circle (2mm);
    \draw[->] (+1,-1) -- (-1,+1)  node[pos = 0.75, red] {${\star}$}; 
  \end{scope}
\end{scope} }}
\]
It is enough to show that 
\[
\kup{\NB{\tikz[scale=0.8]{\begin{scope}[scale=0.5]
  \begin{scope}
    \draw[->] (-1,-1) .. controls +(0, 0.5) and +(0, -0.5) .. (+1,+1) node[pos = 0.25, red] {${\star}$}; 
    \fill[white] (0,0) circle (2mm);
    \draw[->] (+1,-1) .. controls +(0, 0.5) and +(0, -0.5) ..  (-1,+1);
    \draw [densely dotted] (1.5,-1) -- (4.5, -1) -- (4.5, +1) -- (1.5, 1) --cycle;
    \node at (3,0) {$\Gamma$};
    \draw (-1,  1) arc ( 180:0:2.5 and 0.75);
    \draw ( 1,  1) arc ( 180:0:0.5);
    \draw (-1, -1) arc (-180:0:2.5 and 0.75);
    \draw ( 1, -1) arc (-180:0:0.5);
  \end{scope}
\end{scope} }}}_{N|1}=
\kup{\NB{\tikz[scale=0.8]{\begin{scope}[scale=0.5]
  \begin{scope}
    \draw[->] (-1,-1) .. controls +(0, 0.5) and +(0, -0.5) .. (+1,+1);
    \fill[white] (0,0) circle (2mm);
    \draw[->] (+1,-1) .. controls +(0, 0.5) and +(0, -0.5) ..  (-1,+1)  node[pos = 0.75, red] {${\star}$}; 
    \draw [densely dotted] (1.5,-1) -- (4.5, -1) -- (4.5, +1) -- (1.5, 1) --cycle;
    \node at (3,0) {$\Gamma$};
    \draw (-1,  1) arc ( 180:0:2.5 and 0.75);
    \draw ( 1,  1) arc ( 180:0:0.5);
    \draw (-1, -1) arc (-180:0:2.5 and 0.75);
    \draw ( 1, -1) arc (-180:0:0.5);
  \end{scope}
\end{scope} }}}_{N|1}
\]
for any MOY graph $\Gamma$ with four ends. 
One can actually suppose (see \cite[Section 2.3.1]{2017arXiv171103333S}) that $\Gamma$ is equal to a linear combination of MOY graph of the form
\[
\NB{\tikz[scale=1]{\begin{scope}[font= \tiny]
  \draw[-<] (0, -2) -- +(0,2) node[pos= 0.5,left] {$N+j$} node[pos= 0,left] {$N$} node[pos= 1,left] {$N$} ; 
  \draw[-<] (1, -2) -- +(0,2) node[pos= 0.5,right] {$N-j$} node[pos= 0,right] {$N$} node[pos= 1,right] {$N$}; 
  \draw[-<-] (1,-1.5) -- +(-1, 0.25) node[midway, above] {$j$};
  \draw[-<-] (0,-0.75) -- +(1, 0.25) node[midway, above] {$j$};
\end{scope} }}
\]
with $0\leq j\leq N$.
Using relation (\ref{eq:extcrossplus}), one can express $\kup{\beta_1}$ and $\kup{\beta_2}$ as:
\begin{align*}
  \kup{\beta_1}_{N|1} &= \sum_{i=0}^n \sum_{j=0}^n \lambda_{ij} \kup{\Gamma_\star^{ij}}_{N|1} 
  \quad \textrm{and} \\
  \kup{\beta_2}_{N|1} &= \sum_{i=0}^n \sum_{j=0}^n \lambda_{ij} \kup{\Phi_\star^{ij}}_{N|1} 
\end{align*}
Where
\[
  \Gamma_{\star}^{ij} = \NB{\tikz[scale= 0.6]{\begin{scope}[font= \tiny]
  \draw[->-] (0, -2) -- +(0,4) node[pos= 0.2,left] {$N+i$} node[pos= 0.5,left] {$N$} node[pos= 0.8,left] {$N-j$}  arc(180:0:2) -- +(0, -4) arc (0:-180:2);
  \draw[->-] (1, -2) -- +(0,4) node[pos= 0.2,right] {$N-i$} node[pos= 0.5,right] {$N$} node[pos= 0.8,right] {$N+j$} arc(180:0:1) -- +(0, -4) arc (0:-180:1);
  \draw[->-] (1,-2) -- +(-1, 0.25) node[midway, above] {$i$};
  \draw[->-] (0,-0.75) -- +(1, 0.25) node[midway, above] {$i$};
  \draw[->-] (0, 0.5) -- +(1, 0.25) node[midway, above] {$j$};
  \draw[->-] (1, 1.75) -- +(-1, 0.25) node[midway, above] {$j$};
  \node[red, font=\normalsize] at (0, -2) {$\star$};
\end{scope} }}
  \qquad \text{and} \qquad
  \Phi_{\star}^{ij} = \NB{\tikz[scale= 0.6]{\begin{scope}[font= \tiny]
  \draw[->-] (0, -2) -- +(0,4) node[pos= 0.2,left] {$N+i$} node[pos= 0.5,left] {$N$} node[pos= 0.8,left] {$N-j$}  arc(180:0:2) -- +(0, -4) arc (0:-180:2);
  \draw[->-] (1, -2) -- +(0,4) node[pos= 0.2,right] {$N-i$} node[pos= 0.5,right] {$N$} node[pos= 0.8,right] {$N+j$} arc(180:0:1) -- +(0, -4) arc (0:-180:1);
  \draw[->-] (1,-2) -- +(-1, 0.25) node[midway, above] {$i$};
  \draw[->-] (0,-0.75) -- +(1, 0.25) node[midway, above] {$i$};
  \draw[->-] (0, 0.5) -- +(1, 0.25) node[midway, above] {$j$};
  \draw[->-] (1, 1.75) -- +(-1, 0.25) node[midway, above] {$j$};
  \node[red, font=\normalsize] at (0, 0) {$\star$};
\end{scope} }}.
\]
Using the skein relations, one can compute $\kup{\Gamma^{ij}_\star}_{N|1}$ and $\kup{\Phi_\star^{ij}}_{N|1}$: 
\begin{align*}
  \kup{\Gamma_\star^{ij}}_{N|1} &=
  \begin{bmatrix}
    i+j \\ i
  \end{bmatrix}
  \begin{bmatrix}
    N-1 -(i+j) \\ N-i
  \end{bmatrix}
  \begin{bmatrix}
    N-1 -N \\ i
  \end{bmatrix}
  \begin{bmatrix}
    N \\ j
  \end{bmatrix}, \\
    \kup{\Phi_\star^{ij}}_{N|1} &= 
  \begin{bmatrix}
    i+j \\ i
  \end{bmatrix}
  \begin{bmatrix}
    N-1 -(i+j) \\ N-j
  \end{bmatrix}
  \begin{bmatrix}
    N-1 -N \\ j
  \end{bmatrix}
  \begin{bmatrix}
    N \\ i
  \end{bmatrix}.
\end{align*}
These two products of quantum binomials are equal.
Hence $\kup{\beta_1}_{N|1} = \kup{\beta_2}_{N|1}$ and therefore $Q(\beta_1)_{N|1}= Q(\beta_2)_{N|1}$.
\end{proof}

\begin{thm}
  \label{thm:kashaev} 
  For any $N \in \ZZ_{\geq 0}$ and any link $L$, one has:
  \[
  Q_{N|1}(L,q= e^{\frac{i\pi}{N+1}}) = J'_N(\overline{L}, q=e^{\frac{i\pi}{N+1}}),
\]
where $\overline{L}$ denotes the mirror image of $L$.
\end{thm}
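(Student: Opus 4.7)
The plan is to present $L$ as the closure of a braid $\beta$ and reduce both sides of the identity to explicit state sums at the root of unity $q = e^{i\pi/(N+1)}$. For $Q_{N|1}(L)$, I would expand all $N$-labeled crossings of $\beta$ via formulas (\ref{eq:extcrossplus}) and (\ref{eq:extcrossminus}) to get a linear combination of marked planar MOY graphs, each evaluated by summing over $(N|1)$-colorings subject to the marked-point constraint (no exterior color appears on the marked edge). For $J'_N(\overline{L})$, I would use the identity $P_{0|2}(L_{N,\dots,N}) = J_N(L, -q^{-1})$ from the introduction together with the mirror image Lemma~\ref{prop:mirror-image} to express $J_N(\overline{L},q)$ as a $(0|2)$-state sum on the same underlying closed $N$-labeled braid, and then divide by $[N+1]$ to pass to the normalized version.

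The heart of the proof is then to identify these two state sums at $q = e^{i\pi/(N+1)}$ via an explicit weight-preserving correspondence between $(N|1)$-colorings of the marked closed braid and $(0|2)$-colorings of the unmarked closed braid. On both sides an edge of label $N$ is split: in the $(0|2)$ case, into two ``super'' sublabels $(a, N-a)$; in the $(N|1)$ case, into $N$ cyclic ``exterior'' labels plus one ``super'' sublabel, the marked-point condition forcing the number of exterior labels crossing the marked edge to be $0$. Morally, the marked-point renormalization on the $(N|1)$-side plays the role of division by the vanishing quantum dimension $[N+1]$ on the $(0|2)$-side, since the $N$-labeled unknot has $(N|1)$-evaluation $\begin{bmatrix}N-1\\N\end{bmatrix} = 0$ that becomes $1$ after marking.

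The main obstacle will be matching the state sum weights term by term. Specifically, one must verify three coincidences at $q = e^{i\pi/(N+1)}$: first, the binomial weight $\begin{bmatrix}N-1\\ k\end{bmatrix}$ governing $(N|1)$-circles and split vertices matches $\begin{bmatrix}-2\\ k\end{bmatrix}/[N+1]$ (up to an explicit sign), using the palindromic identity $[N+1-k] = [k]$ that holds at this specific root of unity; second, the parity sign $(-1)^{s(c)}$ in $\kup{\Gamma,c}_{N|1}$, the analogous parity signs in $\kup{\Gamma,c}_{0|2}$, the global factor $(-1)^{N|\beta|}$ in the definition of $Q_{N|1}$, and the factor $(-1)^{N\ell}$ in $P_{0|2}(L_{N,\dots,N}) = J_N(L, -q^{-1})$ all combine consistently; third, the rotational weights $w_\rho$ on the two sides (which carry different factors $N+M-2i+1$ because $N+M$ differs) yield matching powers of $q$ after the identity $q^{N+1} = -1$ is used to reduce exponents modulo $2(N+1)$. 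Once these coincidences are verified, the theorem reduces to the term-by-term equality of the two state sums under the combinatorial bijection sketched above.
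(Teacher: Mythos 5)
Your global strategy (present $L$ as a braid closure, expand the crossings identically on both sides, and compare the resulting planar evaluations at $q=e^{i\pi/(N+1)}$) matches the paper's, but the step you place at ``the heart of the proof'' --- a weight-preserving correspondence between $(N|1)$-colorings of the marked graphs and $(0|2)$-colorings of the unmarked graphs, leading to ``term-by-term equality of the two state sums'' at the root of unity --- cannot work. At $q=e^{i\pi/(N+1)}$ the $(0|2)$ state sum of any closed $N$-labeled braid graph is identically zero: it carries the unknot value $\left[\begin{smallmatrix}-2\\ N\end{smallmatrix}\right]=(-1)^N[N+1]$ as a factor, and $[N+1]$ vanishes at this root of unity (already for the circle of label $N$ the marked $(N|1)$-evaluation is $1$ while the $(0|2)$-evaluation is a sum of $N+1$ monomials totalling $0$). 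Since the marked $(N|1)$ state sum computes the generally nonzero Kashaev invariant, no coloring-by-coloring identification of the two specialized sums is possible; relatedly, your first ``coincidence'' $\left[\begin{smallmatrix}N-1\\ k\end{smallmatrix}\right]=\pm\left[\begin{smallmatrix}-2\\ k\end{smallmatrix}\right]/[N+1]$ is a division by zero. The normalization by the vanishing quantum dimension must be cancelled \emph{before} specializing $q$, and that cancellation destroys the state-sum structure you are trying to preserve.

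The paper circumvents this by abandoning colorings at exactly this point. By Proposition~\ref{prop:completeness}, both $\kup{\Gamma_\star}_{N|1}$ and the normalized $(0|2)$-evaluation are computed by the \emph{same} sequence of local relations of Proposition~\ref{prop:rel-kups}, whose coefficients are quantum binomials depending on $N-M$ only through entries of the form $\left[\begin{smallmatrix}(N-M)-c_j\\ d_j\end{smallmatrix}\right]$; this yields a single expression $R_\Gamma(n,q)$ with $\kup{\Gamma_\star}_{N|1}=R_\Gamma(N-1,q)$ and $\kup{\Gamma}_{0|2}=R_\Gamma(-2,q)\cdot(\text{unknot value})$, so the division is exact at the level of Laurent polynomials. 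Lemma~\ref{lem:qbin-root} then gives $\left[\begin{smallmatrix}N-1-a\\ b\end{smallmatrix}\right]=(-1)^b\left[\begin{smallmatrix}-2-a\\ b\end{smallmatrix}\right]$ at the root of unity, and a level-counting argument (the $d_j$'s sum to $(k-1)N$ for a braid of index $k$) controls the accumulated sign $(-1)^{(k-1)N}$, which is absorbed into the normalizations using $k+c_--c_+\equiv\ell\pmod 2$. To repair your proof you would need to replace the proposed bijection with some mechanism of this kind that performs the cancellation of $[N+1]$ polynomially rather than pointwise.
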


Note that the sequence $\left(J'_n(\overline{L}, q=e^{\frac{i\pi}{n+1}})\right)_{n \in \ZZ_{>0}}$ appears in the formulation of the volume conjecture \cite{MR1341338, MR1828373}. For proving this theorem we need a technical result about quantum binomials evaluated at root of unity.

\begin{lem}\label{lem:qbin-root}
  Let $N$, $a$ and $b$ be three integers. The following identity holds:
  \[
    \begin{bmatrix}
      N-1 - a \\
      b
    \end{bmatrix}_{q= e^{\frac{i\pi}{N+1}}}= (-1)^b\begin{bmatrix}
      -2- a \\
      b
    \end{bmatrix}_{q= e^{\frac{i\pi}{N+1}}}
    \]
\end{lem}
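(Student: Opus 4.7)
The plan is to use the defining product formulas for the quantum binomials together with the single nontrivial feature of this specific root of unity, namely $q^{N+1}=-1$.

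First I would set $\zeta = e^{i\pi/(N+1)}$ and record that $\zeta^{N+1} = e^{i\pi} = -1$, hence also $\zeta^{-(N+1)} = -1$. Plugged into the definition $[n] = (q^n - q^{-n})/(q-q^{-1})$ from Definition~\ref{dfn:quantum-numbers}, this immediately yields the shift relation
\[
[n + (N+1)]\big|_{q=\zeta} \;=\; -[n]\big|_{q=\zeta}
\]
for every integer $n$, since pulling $\zeta^{\pm(N+1)} = -1$ out of each term of the numerator flips the overall sign while leaving the denominator untouched.

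Next I would apply this shift factor-by-factor in the product expression. For $b \geq 0$ the definition gives
\[
\begin{bmatrix} N-1-a \\ b \end{bmatrix} = \frac{\prod_{i=0}^{b-1}[N-1-a-i]}{\prod_{i=1}^{b}[i]}
\qquad\text{and}\qquad
\begin{bmatrix} -2-a \\ b \end{bmatrix} = \frac{\prod_{i=0}^{b-1}[-2-a-i]}{\prod_{i=1}^{b}[i]}.
\]
The two denominators are identical. For the numerators, the identity $N-1-a-i = (-2-a-i)+(N+1)$ combined with the shift relation gives $[N-1-a-i]|_{q=\zeta} = -[-2-a-i]|_{q=\zeta}$ for each $i \in \{0,\dots,b-1\}$. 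Taking the product over the $b$ factors produces precisely the sign $(-1)^b$, yielding the claimed equality. For $b<0$ both sides are $0$ and the statement is vacuous.

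The only subtlety I foresee is that when $b \geq N+1$ certain factors $[i]$ in the denominator vanish at $\zeta$, so the formal ratio is a $0/0$ indeterminate form. This is what I would expect to be the main (though minor) obstacle. The cleanest remedy is to regard each quantum binomial as a Laurent polynomial in $q$ (as justified by the Remark following Definition~\ref{dfn:quantum-numbers}), verify the identity in $\mathbb{C}(q)$ at all roots $q$ with $q^{N+1} = -1$ where the denominators do not vanish, and then appeal to the fact that a Laurent polynomial is determined by its values on a Zariski-dense set; or, alternatively, to prove the identity by induction on $b$ using the Pascal-type recursions of Lemma~\ref{lem:pascal} and Corollary~\ref{cor:anti-pascal}, the base case $b=0$ being trivial and the inductive step reducing to the same shift relation applied to a single factor. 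Either route is short; the essential content is entirely the observation $\zeta^{N+1} = -1$.
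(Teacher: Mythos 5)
Your main computation is exactly the paper's: at $\zeta = e^{i\pi/(N+1)}$ one has $\zeta^{\pm(N+1)}=-1$, hence $[n+(N+1)]=-[n]$ at $q=\zeta$ for every integer $n$, and applying this to each of the $b$ numerator factors of the product formula produces the sign $(-1)^b$ while the denominators coincide. Up to index bookkeeping this is word for word the argument given in the paper.

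The subtlety you flag, however, is not a minor technicality, and neither of your proposed remedies repairs it. The Zariski-density route is unavailable: the claimed equality is not an identity of Laurent polynomials in $q$ (it visibly fails at generic $q$), it is asserted only on the finite set of $q$ with $q^{N+1}=-1$, and on that entire set the denominator $\prod_{i=1}^{b}[i]$ vanishes as soon as $b\geq N+1$, since $[N+1]=0$ there; so there are no ``good'' points to extend from. The Pascal-recursion induction cannot succeed either, for the decisive reason that the statement is actually \emph{false} when $b\geq N+1$: take $N=1$, $a=0$, $b=2$, so $\zeta=i$; then $\qbinil{0}{2}$ is the zero Laurent polynomial, whereas $(-1)^{2}\qbinil{-2}{2}=\qbinil{3}{2}=[3]$, which evaluates to $-1$ at $q=i$. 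Thus the $0/0$ phenomenon you worry about marks a genuine failure of the identity, not merely of the proof (the paper's own proof silently manipulates the ratio as if the denominator were invertible, so it has the same gap). The correct statement carries the additional hypothesis $0\leq b\leq N$, under which $[1],\dots,[b]$ are all nonzero at $\zeta$ and your factor-by-factor argument is rigorous exactly as you wrote it; one should then check that only binomials with $b\leq N$ arise in the application to Theorem~\ref{thm:kashaev}.
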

\begin{proof}
  We assume here that $q = e^{\frac{i\pi}{N+1}}$.
  \begin{align*}
     \begin{bmatrix}
      N-1 - a \\
      b
    \end{bmatrix}
    &=\left(\prod_{i=1}^b\frac{q^{N-1-a+i} - q^{-N-a+1-i}}{q^{i} -
      q^{-i}}
    \right)
    \\
    &=\left(\prod_{i=1}^b\frac{-q^{-2-a +i} + q^{-2-a-i}}{q^{i} -
      q^{-i}}
    \right)
    \\
    & = (-1)^b
    \begin{bmatrix}
      -2- a \\
      b
    \end{bmatrix}. \qedhere
  \end{align*}
\end{proof}

\begin{proof}[Proof of Theorem~\ref{thm:kashaev}]
  Let us fix a braid diagram $\beta$ representing $L$ and denote $k$ the braid index of $\beta$. Once every strand of $\beta$ is labeled by $N$, the level of $\beta$ is $kN$.
  We will compare $\kup{\beta_\star}_{N|1}$ and $\frac{\kup{\beta}_{0|2}}{[-N]}$ evaluated at $q= e^{\frac{i\pi}{N+1}}$.
  The expansion of crossings is the same in both cases. Hence it is enough to compare
  \[
    \left.\kup{\Gamma_\star}_{N|1}\right|_{q= e^{\frac{i\pi}{N+1}}}
    \quad \text{and} \quad \left.\frac{\kup{\Gamma}_{0|2}}{[-N]}\right|_{q= e^{\frac{i\pi}{N+1}}}\]
  for $\Gamma_\star$ a marked MOY graph appearing in the expansion of $\beta_\star$.
  Both these quantities can be computed using the relations in Proposition~\ref{prop:rel-kups} for $(N|1)$ and $(0|2)$  
  and the fact that $\kup{U_\star}_{N|1}= \frac{\kup{U}_{0|2}}{[-N]} =1$ for $U$ a circle of label $N$.
  More precisely, one has:
  \begin{align*}
    \kup{\Gamma_\star}_{N|1} &= R_\Gamma(N-1, q)\kup{U_\star}_{N|1} \quad \text{and} \quad
    \frac{\kup{\Gamma}_{0|2}}{[-N]} = R_\Gamma(-2,q) \frac{\kup{U}_{0|2}}{[-N]},
  \end{align*}
  for a $R_\Gamma(n, q)$ a sum of product of quantum binomials of the form $\begin{bmatrix} a_i \\
      b_i
    \end{bmatrix}$ and $\begin{bmatrix}
      n- c_j \\
      d_j
    \end{bmatrix}$. The first binomials correspond to relations (\ref{eq:extrelbin1}), (\ref{eq:extrelsquare2}) and (\ref{eq:extrelsquare3}) which preserve the level, the second correspond to relations~(\ref{eq:extrelcircle}) (\ref{eq:extrelbin2}) (\ref{eq:extrelsquare1}) which do not preserve the level. Note that a binomial $\begin{bmatrix}n- c_j \\ d_j \end{bmatrix}$ appears exactly when the level decreases by $d_j$. Since to go from $\Gamma$ to $U$, the level is decreases by $(k-1)N$, for each product of binomials in $R_\Gamma(n, q)$, the sum of the $d_j$'s equals $(k-1)N$. This gives, in view of Lemma~\ref{lem:qbin-root}:
    \[ R_\Gamma\left(N-1,e^{\frac{i\pi}{N+1}}\right)=(-1)^{(k-1)N}R_\Gamma\left(-2, e^{\frac{i\pi}{N+1}}\right)
    \]
    and therefore
    \[
      \kup{\beta_\star, q= e^{\frac{i\pi}{N+1}}}_{N|1} = (-1)^{(k-1)N}\frac{\kup{\beta, q= e^{\frac{i\pi}{N+1}}}_{0|2}}{[-N]_{q= e^{\frac{i\pi}{N+1}}}}.
\]    
Since all strands are labeled by $N$, one has: 
\begin{align*}
  P_{0|2}(L) &= (-1)^{N(c_+ + c_-)} q^{N(N+1)(c_- - c_+)} \kup{\beta}_{0|2} \quad \text{and} \\
  Q_{N|1}(L) &= (-1)^{N(c_+ + c_-)}  \kup{\beta_\star}_{N|1},
\end{align*}
where $c_+$ and $c_-$ are the number of positive and negative crossings of $\beta$ respectively.
Hence, for $q = \exp(\frac{i\pi}{N+1})$, \[Q_{N|1}(L)
  =
  (-1)^{N(k-1 +c_- -c_+)} \frac{P_{0|2}(L)}{[-N]}
  =
  (-1)^{N(k +c_- -c_+)} \frac{P_{0|2}(L)}{[N]}
  .\] It turns out that $k +c_--c_+$ has the same parity as the number $\ell$ of components of $L$. This gives:
\[
Q_{N|1}(L,q= e^{\frac{i\pi}{N+1}})=\left.(-1)^{N\ell} \frac{P_{0|2}(L)}{[N]}\right|_{q= e^{\frac{i\pi}{N+1}}} = J'_N(\overline{L},q= e^{\frac{i\pi}{N+1}}).\qedhere
\]
\end{proof}
\begin{rmk}
The previous result can also be obtained in an indirect way from the work of Geer and Patureau-Mirand \cite{MR2468374}. In this paper, they prove that the generalized Links--Gould invariants specialize to the Kashaev invariants. These invariants are constructed using typical representations $V(\alpha)$ of $U_q(\gll_{N|1})$ depending on a complex parameter $\alpha$. For $\alpha=-1$, these are exactly the representations considered in the present paper and this specialization $\alpha=-1$ is compatible with the ones providing the Kashaev invariants.
\end{rmk}

\appendix

\section{MOY graphs, an algebraic approach}
\label{sec:moy-graphs-an}

For a general introduction to  the Reshetikhin--Turaev functors, we refer to Turaev's book \cite{MR1292673}. For more details on the super setting, we refer to Geer and Patureau-Mirand \cite{MR2640994} and  references therein. The later paper can also be consulted for the renormalization process (see also \cite{2015arXiv150603329Q}). Here, we follow  and expand the diagrammatic presentation of Tubbenhauer--Vaz--Wedrich \cite{tubbenhauer2015super}. For the reader convenience, we made explicit the various pieces of the Reshetikhin--Turaev functor. All proofs are omitted since they are all direct verifications.

In what follows, a \emph{super} algebra is presented. It means that it is endowed with a $\ZZ/2$-grading and more importantly, its modules are objects of $\svect$ the category of super vector spaces. Objects of this category are $\ZZ/2$-graded vector spaces, morphisms are linear maps preserving the $\ZZ/2$ grading.
The $\ZZ/2$-grading, called \emph{parity}, is denoted $|\!\bullet\!|$. Homogeneous elements with parity equal to $0$ (\resp{}$1$) are \emph{even} (\resp{}\emph{odd}).
This category inherits from $\vect$ a monoidal structure and a duality. The braiding $c$ on $\svect$ differs from that of $\vect$:
\[
\begin{array}{crcl}
  c_{V,W}  \colon\thinspace & V\otimes W & \to & W\otimes V \\
  & v\otimes w   &\mapsto & (-1)^{|v||w|}w\otimes v. 
\end{array}
\]
If $V$ is an object of $\svect$ which has finite dimension as a vector space. 
Its \emph{super dimension} $\sdim V$ is the integer $\dim V_0 - \dim V_1$, where $V= V_0 \oplus V_1$ is the $\ZZ/2$-decomposition\footnote{
The super dimension is actually the categorical dimension, the sign coming from the braiding on $\svect$.
}. The \emph{classical dimension} of $V$ is its dimension as a vector space. 
\begin{dfn} 
Let $N$ and $M$ be two non-negative integers. The \emph{quantum general linear superalgebra} $U_q(\gll_{N|M})$ is
the associative, unital, $\ZZ/2$-graded $\CC(q)$-algebra generated by $L_i$, $L_i^{-1}$, $F_j$ and $E_j$, with $1\leq i \leq N+M$ and $1\leq j \leq N+M-1$ 
subject to the nonsuper relations
\begin{gather*}
L_iL_j = L_jL_i,  \qquad L_iL_i^{-1}= L_i^{-1}L_i= 1, \\ 
L_iE_i = qE_iL_i, \qquad L_{i}E_{i-1} = q^{-1}E_{i-1}L_{i}, \qquad \text{for $i\leq N$},\\ 
L_iF_i = q^{-1}F_iL_i, \qquad L_{i}F_{i-1} = qF_{i-1}L_{i}, \qquad \text{for $i\leq N$},\\
L_iE_i = q^{-1}E_iL_i, \qquad L_{i}E_{i-1} = qE_{i-1}L_{i}, \qquad \text{for $i\geq N+1$},\\ 
L_iF_i = qF_iL_i, \qquad L_{i}F_{i-1} = q^{-1}F_{i-1}L_{i}, \qquad \text{for $i\geq N+1$},\\
L_iF_j = F_jL_i, \qquad L_iE_j = E_jL_i \qquad \text{for $j\neq i, i-1$,} \\
E_iF_j - F_jE_i = \delta_{ij}\frac{L_iL_{i+1}^{-1} - L_i^{-1}L_{i+1}}{q-q^{-1}}, \text{for $1\leq i\leq N-1$,}   \\
E_iF_j - F_jE_i = -\delta_{ij}\frac{L_iL_{i+1}^{-1} - L_i^{-1}L_{i+1}}{q-q^{-1}}, \text{for $N+1\leq i <N+M-1$,}   \\
[2]_qF_i F_j F_i = F_i^2F_j + F_j F_i^2   \qquad \text{if $|i − j| = 1$ and $i\neq N$,}  \\
[2]_qE_i E_j E_i = E_i^2 E_j + E_j E_i^2   \qquad \text{if $|i − j| = 1$ and $i\neq N$,}  \\
E_i E_j = E_j E_i, \qquad F_iF_j = F_jF_i \qquad  \text{if $|i − j| > 1$}
\end{gather*}
and the super relations
\begin{align*}
 & E_N^2= F_N^2 =0 \qquad E_NF_N + F_NE_N =\frac{L_NL_{N+1}^{-1} - L_N^{-1}L_{N+1}}{q-q^{-1}}\\
  &[2]F_{N}F_{N-1}F_{N+1}F_{N}=\\  &F_{N}F_{N-1}F_{N}F_{N+1} + F_{N}F_{N+1}F_{N}F_{N-1} + F_{N-1}F_{N}F_{N+1}F_{N}+F_{N+1}F_{N}F_{N-1}F_{N}, \\
  &[2]E_{N}E_{N-1}E_{N+1}E_{N}=\\  &E_{N}E_{N-1}E_{N}E_{N+1} + E_{N}E_{N+1}E_{N}E_{N-1} + E_{N-1}E_{N}E_{N+1}E_{N}+E_{N+1}E_{N}E_{N-1}E_{N}. 
\end{align*}
All these generators are $\ZZ/2$-homogeneous and even, but $E_N$ and $F_N$ which are $\ZZ/2$-homogeneous and odd.
\end{dfn}
\begin{prop} Defining $\Delta:U_q(\gll_{N|M})\to U_q(\gll_{N|M})^{\otimes 2}$, $S:U_q(\gll_{N|M})^{\mathrm{op}} \to  U_q(\gll_{N|M})$ and $\epsilon: U_q(\gll_{N|M})\to \CC(q)$ to be the $\CC(q)$ algebra maps defined by:
  \begin{align*}
    &\Delta(L_i^{\pm 1}) = L_i^{\pm1}\otimes L_i^{\pm1}        & \quad &S(L_i^{\pm1})= L_i^{\mp1}               & \quad & \epsilon(L_i^{\pm 1}) =1  &  \\
    &\Delta(F_i) = F_i\otimes 1 + L_i^{-1}L_{i+1}\otimes F_i  & \quad &S(F_i) = - L_iL_{i+1}^{-1}F_i           & \quad  & \epsilon(F_i)=0 & \\
    &\Delta(E_i)= E_i\otimes 1 + L_iL_{i+1}^{-1}\otimes E_i   &\quad & S(E_i) = - E_iL_i^{-1}L_{i+1}           &  \quad &\epsilon(E_i)=0&
  \end{align*}
  endows $U_q(\gll_{N|M})$ 
with a structure of $\ZZ/2$-graded Hopf algebra with antipode. Furthermore the category of finite-dimensional $U_q(\gll_{N|M})$-modules is braided.
\end{prop}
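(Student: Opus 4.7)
The plan is to verify the claims in three stages, in the order they appear: first that the maps $\Delta$, $S$, $\epsilon$ are well defined on the prescribed generators, second that they satisfy the Hopf algebra axioms, and finally that this Hopf algebra is quasi-triangular so that its category of finite-dimensional modules is braided.

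For the first stage, since $\Delta$, $S$, $\epsilon$ are prescribed on generators, I would extend them formally to the tensor algebra on the generators and then check that they descend to the quotient defining $U_q(\gll_{N|M})$. Concretely, this means verifying that each of the defining relations (the commutation relations among $L_i^{\pm 1}$, $E_j$, $F_j$, the quantum Serre relations, and the odd relations at the node $N$) is sent to zero under $\Delta$, $S$, $\epsilon$ viewed as maps into $U_q(\gll_{N|M})^{\otimes 2}$, $U_q(\gll_{N|M})^{\mathrm{op}}$, and $\CC(q)$ respectively. The key subtlety is that for odd generators $E_N$, $F_N$ the tensor product on $U_q(\gll_{N|M})^{\otimes 2}$ uses the super braiding, so signs $(-1)^{|a||b|}$ appear when applying $\Delta(a)\Delta(b)$ to compare with $\Delta(ab)$. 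I would check the $E_N^2=F_N^2=0$ relation and the mixed odd Serre relation at the node $N$ by expanding $\Delta(E_N)^2$ (with super signs) and using $L_N L_{N+1}^{-1}\otimes L_N L_{N+1}^{-1}$ to commute the tensor factors. The non-super relations are verified exactly as in the classical $U_q(\gll_{N+M})$ setting.

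For the second stage, coassociativity $(\Delta\otimes\id)\Delta=(\id\otimes\Delta)\Delta$, the counit axiom $(\epsilon\otimes\id)\Delta=(\id\otimes\epsilon)\Delta=\id$, and the antipode axiom $\mu\circ(S\otimes\id)\circ\Delta=\mu\circ(\id\otimes S)\circ\Delta=\eta\circ\epsilon$ are each algebra identities and it suffices to check them on the generators $L_i^{\pm 1}$, $E_j$, $F_j$. These are short direct computations; the sign conventions encoded in the super structure only enter when extending from generators to products, so on generators the verification is essentially identical to the classical quantum group case.

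For the third stage (the braiding on the module category), the plan is to exhibit a universal $R$-matrix $R\in U_q(\gll_{N|M})\widehat{\otimes} U_q(\gll_{N|M})$ satisfying $\Delta^{\mathrm{op}}(x)=R\Delta(x)R^{-1}$ together with the hexagon identities $(\Delta\otimes\id)(R)=R_{13}R_{23}$ and $(\id\otimes\Delta)(R)=R_{13}R_{12}$. On any pair of finite-dimensional modules this $R$-matrix composed with the super flip $c_{V,W}(v\otimes w)=(-1)^{|v||w|}w\otimes v$ gives a braiding. The hardest part of the whole proposition is this last step: writing down the $R$-matrix in closed form and verifying the hexagon identities. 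I would invoke the standard construction via a quantum double or, more concretely, write $R$ as a product of $q$-exponentials $\exp_q((q-q^{-1})E_\beta\otimes F_\beta)$ indexed by positive roots $\beta$ of $\gll_{N|M}$ in a chosen convex order, multiplied by a Cartan part of the form $q^{\sum c_{ij}H_i\otimes H_j}$, with the super sign rule built into the $q$-exponential for odd roots. Rather than verifying the hexagon identities by hand, I would refer the reader to the literature on quantum supergroups (for instance Yamane's work, or the references cited in Geer--Patureau-Mirand and Tubbenhauer--Vaz--Wedrich mentioned at the start of the appendix), since the paper explicitly signals that ``all proofs are omitted since they are all direct verifications.''
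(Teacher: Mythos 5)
Your plan is correct and matches the paper's treatment: the appendix explicitly states that all proofs there are omitted as direct verifications, and your outline (checking that the defining relations are preserved with the super sign rule on $U_q(\gll_{N|M})^{\otimes 2}$, verifying the Hopf axioms on generators, and deferring the universal $R$-matrix and hexagon identities to the quantum supergroup literature) is precisely the standard verification the authors are alluding to. Your explicit flagging of the super braiding signs in $\Delta(E_N)^2$ and of $S$ as a super-anti-homomorphism covers the only points where the computation differs from the non-super case.
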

\begin{rmk}\label{rmk:swapMN}
  The Hopf algebras $U_q(\gll_{N|M})$ and $U_q(\gll_{M|N})$ are isomorphic. Indeed, one easily checks that the map
\[  \begin{array}{crcl}
\varphi  :& U_q(\gll_{N|M})   &\to     & U_q(\gll_{M|N})\\
& L_i &\mapsto &L_{M+N+1-i}^{- 1} \\
& L_i^{-1} &\mapsto &L_{M+N+1-i} \\
& E_i &\mapsto &F_{M+N-i} \\
& F_i &\mapsto &e_{M+N-i}
\end{array}
\]
induces an isomorphism of Hopf algebras.  
\end{rmk}
\begin{prop}
  Let $\CC_q^{N|M}$  be the $\ZZ/2$-graded $\CC(q)$-vector space generated by the homogeneous basis $(b_i)_{i=1,\dots, N+M}$ (with $|b_i|=0$ if $i\leq N$ and $|b_i|=1$ if $i> N$). The formulas
\begin{align*}
&L_i b_i = qb_i,& &L_i^{-1} b_{i} =q^{-1}b_{i},&  &\text{if }1\leq i\leq N, \\
&L_j b_j = q^{-1}b_j,& &L_j^{-1} b_{j} =qb_{j},&  &\text{if }N+1\leq j \leq N +M, \\
&L_i^{\pm 1} b_j = b_j && &&\text{if }i \neq j, &\\
&E_{i-1} b_i = b_{i-1}& &E_i b_{j} =0,&  &\textrm{if $i\neq j-1$,} & \\
&F_i b_i = b_{i+1}& &F_i b_{j} =0,&  &\textrm{if $i\neq j$}& 
\end{align*}
endow $\CC^{N|M}_q$ with a structure of $U_q(\gll_{N|M})$-module. 
\end{prop}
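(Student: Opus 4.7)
The proof is a direct verification that the proposed formulas are compatible with every defining relation of $U_q(\gll_{N|M})$. My plan is to evaluate both sides of each relation on an arbitrary basis vector $b_k$ and check they agree. The action of each generator is essentially a labeled arrow on the chain $b_1 \to b_2 \to \cdots \to b_{N+M}$ (with $F_i$ moving indices up, $E_i$ moving them down, $L_i^{\pm 1}$ acting diagonally), so most computations reduce to a bookkeeping exercise on the indices.

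First I would dispatch the easy families: the commutativity $L_iL_j = L_jL_i$ is immediate since each $L_i^{\pm 1}$ acts diagonally on the basis; $L_iL_i^{-1} = 1$ is clear from the eigenvalues; and the weight relations $L_iE_j = q^{\pm 1}E_jL_i$, $L_iF_j = q^{\mp 1}F_jL_i$ come down to comparing the $q$-eigenvalues of $L_i$ on $b_k$ versus $b_{k\pm 1}$, distinguishing the cases $i \leq N$ and $i \geq N+1$ to get the right sign of the exponent. The relation $E_iF_j - F_jE_i = \pm\delta_{ij}\frac{L_iL_{i+1}^{-1} - L_i^{-1}L_{i+1}}{q-q^{-1}}$ is tested against $b_k$: for $j \neq i$ or $k \notin \{i, i+1\}$ both sides vanish, and for $k \in \{i, i+1\}$ the left-hand side produces $\pm b_k$ while the right-hand side produces $\pm[1]b_k = \pm b_k$, with the sign of the relation switching precisely at $i = N$ (where the middle vector $b_{N+1}$ is odd and the difference of eigenvalues flips).

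Next I would verify the nilpotency $E_N^2 = F_N^2 = 0$: this is automatic because $F_N b_k$ is nonzero only for $k = N$ and lands in $b_{N+1}$, and $F_N b_{N+1} = 0$ by definition; similarly for $E_N$. For the ordinary Serre relations $[2]_qF_iF_jF_i = F_i^2F_j + F_jF_i^2$ with $|i-j| = 1$ and $i \neq N$, I would apply both sides to each $b_k$. At most one term on each side is nonzero, and the nonvanishing cases are isolated; in fact $F_i^2 b_k = 0$ always on the standard representation since $F_i$ can raise the index through $i \to i+1$ only once, which immediately forces both sides to vanish when $i \neq N$. The same degeneration makes the quartic Serre relations tractable.

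The main obstacle will be the super Serre relations at the odd node: namely the quartic identity involving $F_N, F_{N-1}, F_{N+1}$ (and its $E$-counterpart). Here one cannot merely invoke that $F_i^2 = 0$; one must carefully evaluate each of the four monomials $F_NF_{N-1}F_{N+1}F_N$, $F_NF_{N-1}F_NF_{N+1}$, etc.\ on each $b_k$. Working out the unique basis vectors on which each monomial survives (the only candidate is $k = N$, giving a chain $b_N \mapsto b_{N+1} \mapsto b_{N+2}$ with an intermediate action of $F_{N-1}$ on $b_{N-1}$), one sees that on the standard representation all monomials map everything to zero, so both sides vanish; the nontriviality of the super Serre relation only appears at higher weight. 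After these checks, together with $\epsilon$-values being irrelevant at the module level, the proof is complete.
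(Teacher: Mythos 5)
Your proposal is correct and takes the same route the paper intends: the appendix explicitly omits this proof as a ``direct verification,'' and your case-by-case evaluation of every defining relation on the basis vectors $b_k$ (including the observations that $F_i^2=E_i^2=0$ on the standard representation and that each monomial in the super Serre relations annihilates every $b_k$) is precisely that verification.
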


\begin{rmk}
Since $U_q(\gll_{N|M})$ is isomorphic to $U_q(\gll_{M|N})$, $\CC_q^{M|N}$ inherits a structure of $U_q(\gll_{N|M})$-module. 
Denote $\boldsymbol{1}_{\mathrm{odd}}$ the trivial\footnote{This means that the generators $L^{\pm1}_\bullet$ act by $1$ and the other generators act by $0$.} one-dimensional $U_q(\gll_{N|M})$-module concentrated in odd parity and $1_{\mathrm{odd}}$ a generator of this module.  The $U_q(\gll_{N|M})$-modules $\CC_q^{M|N}$ and $\boldsymbol{1}_{\mathrm{odd}}\otimes \CC_q^{N|M}$ are isomorphic. Indeed, one can check that

\[  \begin{array}{crcl}
\psi  :& \CC_q^{M|N}   &\to     & \CC_q^{N|M} \otimes \boldsymbol{1}_{\mathrm{odd}} \\
& b_i &\mapsto &b_{N+M-i+1}\otimes 1_{\mathrm{odd}} 
\end{array}
\]
induces an isomorphism of $U_q(\gll_{N|M})$-modules.
\end{rmk}

Denote $T\CC^{N|M}_q$ the $\ZZ/2\times \ZZ$-graded algebra generated by $\CC^{N|M}_q$ and $\mathrm{Sym_q^2}\CC^{N|M}_q$ the ideal generated by the set
\[\{
  b_i \otimes b_i | 1\leq i \leq N
  \}
  \cup
  \{
  q^{-1}b_i \otimes b_j + (-1)^{|b_i||b_j|} b_j \otimes b_i | 1\leq i<j\leq N+M
  \}.
\]
Denote $\Lambda_q \CC^{N|M}_q$ the space $T\CC^{N|M}_q\left/ \mathrm{Sym}_q^2\CC^{N|M}_q\right.$ For $k \in \ZZ_{\geq 0}$, denote $\Lambda^k_q \CC^{N|M}_q$ the $k$-degree (for the $\ZZ$-grading) part of $\Lambda_q \CC^{N|M}_q$.

\begin{prop}
  For all $k$,  $\Lambda^k_q \CC^{N|M}_q$ inherits a $U_q(\gll_{N|M})$-module structure. Its super dimension over $\CC(q)$ is $
  \begin{pmatrix}
    N-M \\k
  \end{pmatrix}$. Its classical dimension is
  \[
    \sum_{i=0}^k
    \begin{pmatrix}
    N \\k-i
  \end{pmatrix}
  \begin{pmatrix}
    M + i -1 \\ i
  \end{pmatrix}.
  \]
\end{prop}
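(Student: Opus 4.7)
The plan is to establish three things in sequence: (i) that $\Lambda^k_q\CC^{N|M}_q$ really does inherit a $U_q(\gll_{N|M})$-module structure, (ii) the classical dimension formula, and (iii) the super dimension formula.

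For (i), I would check that the ideal $\mathrm{Sym}_q^2\CC^{N|M}_q$ is stable under the $U_q(\gll_{N|M})$-action on $T\CC^{N|M}_q$ defined iteratively via the coproduct $\Delta$. It suffices to verify that the action of the generators $L_i^{\pm 1}$, $E_j$, $F_j$ sends each of the two families of generators of $\mathrm{Sym}_q^2\CC^{N|M}_q$ into $\mathrm{Sym}_q^2\CC^{N|M}_q$. The $L_i^{\pm 1}$ case is immediate since $L_i$ acts diagonally. For $E_j$ and $F_j$, I use the explicit coproducts $\Delta(F_i)=F_i\otimes 1+L_i^{-1}L_{i+1}\otimes F_i$ and $\Delta(E_i)=E_i\otimes 1+L_iL_{i+1}^{-1}\otimes E_i$, combined with the fact that $E_j$ and $F_j$ only shift basis vectors by $\pm 1$ in the index. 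The verification splits into routine case checks depending on whether the shifted index equals the other factor and whether one is dealing with the supercommutation relation near the critical index $N$; the powers of $q$ that arise from $\Delta$ are precisely designed to match the $q^{-1}$ in $q^{-1}b_i\otimes b_j+(-1)^{|b_i||b_j|}b_j\otimes b_i$.

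For (ii), I would exhibit an explicit basis of $\Lambda^k_q\CC^{N|M}_q$. Using the relations $b_i\wedge b_i=0$ for $i\leq N$ and $b_j\wedge b_i=-q^{-1}b_i\wedge b_j$ (for $i<j$, not both odd) or $b_j\wedge b_i=q^{-1}b_i\wedge b_j$ (for $i<j$ both odd), every pure wedge can be reordered into a scalar multiple of some $b_{i_1}\wedge\cdots\wedge b_{i_k}$ with $i_1\leq\cdots\leq i_k$ and the extra constraint that no repetition $i_s=i_{s+1}$ occurs in the range $1\leq i_s\leq N$. Linear independence of these ordered monomials can be proved by a standard diamond-lemma argument (confluence of the rewriting rules on $T\CC_q^{N|M}$), or equivalently by checking that the ideal relations form a quadratic Gröbner basis. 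Counting such monomials with exactly $k-i$ indices from $\{1,\dots,N\}$ (strictly increasing, hence $\binom{N}{k-i}$ choices) and $i$ indices from $\{N+1,\dots,N+M\}$ (weakly increasing, hence $\binom{M+i-1}{i}$ choices) and summing over $i$ yields the stated classical dimension.

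For (iii), each such basis vector has $\ZZ/2$-parity equal to $i\pmod 2$, so
\begin{equation*}
\sdim \Lambda^k_q\CC^{N|M}_q=\sum_{i=0}^k(-1)^i\binom{N}{k-i}\binom{M+i-1}{i}=\sum_{i=0}^k\binom{N}{k-i}\binom{-M}{i}=\binom{N-M}{k},
\end{equation*}
where the penultimate equality uses $\binom{-M}{i}=(-1)^i\binom{M+i-1}{i}$ and the last is the (classical) Vandermonde--Chu identity. The main obstacle will be step (ii): verifying that no further linear dependencies arise beyond the defining quadratic relations requires care in the super setting, but once confluence of the rewriting system is established the counting and the Vandermonde identity make (iii) essentially immediate.
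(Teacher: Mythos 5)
Your proposal is correct, and it is essentially the argument the paper intends: the appendix explicitly omits all proofs of these propositions as ``direct verifications,'' and your three steps (stability of the ideal $\mathrm{Sym}_q^2\CC^{N|M}_q$ under the coproduct action, the ordered-monomial basis $b_{I,J}$ with $I$ a subset of $\{1,\dots,N\}$ and $J$ a multi-subset of $\{N+1,\dots,N+M\}$, and the signed count via $\binom{-M}{i}=(-1)^i\binom{M+i-1}{i}$ and Vandermonde) are exactly the standard verification. Note that the paper itself exhibits the basis $(b_{I,J})_{\#I+\#J=k}$ just after the proposition, and the counting and Vandermonde steps are the $q=1$ specializations of the identities it proves in Section~1 (the subset/multi-subset generating functions and Proposition~1.12), so your argument is fully consistent with the machinery already set up; the only point genuinely requiring care, as you say, is linear independence, for which the diamond-lemma/Gr\"obner argument is the right tool.
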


In what follows, we define some morphisms in the category of $U_q(\gll_{N|M})$-modules. For this, we need to introduce a few notations.

The image of a pure tensor $x_1\otimes \dots \otimes x_k$ of $T \CC_q^{N|M}$ in $\Lambda^k\CC_q^{N|M}$ is denoted by
$x_1\wedge \dots \wedge x_k$. In particular one has:
\begin{align*}
  &b_i\wedge b_i =0 \quad\text{if $i\leq N$,}\\
  &b_i\wedge b_j= -(-1)^{|b_i||b_j|}qb_j\wedge b_i \quad \text{for $i<j$.}
\end{align*}

The $\CC(q)$-vector space $\Lambda^k_q\CC_q^{N|M}$ is spanned by the vectors
\[
\left(b_{i_1}\wedge  \dots \wedge b_{i_{k_1}}\wedge b_{j_1}\wedge  \dots \wedge b_{j_{k_2}} \right)
 _{\substack{
1\leq i_1<  \dots < i_{k^E}\leq N\\
N+1\leq j_1\leq  \dots \leq j_{k^S}\leq N+M\\
k = k^E + k^S
}}.
\]
If $1\leq i_1< \dots < i_{k^E}\leq N$ and
$N+1 \leq j_1 \leq \dots \leq j_{k^S} \leq N+M$, denote $b_{I,J} = b_{i_1}\wedge  \dots \wedge b_{i_{k^E}}\wedge b_{j_1}\wedge  \dots \wedge b_{j_{k^S}} $, where $I= \{i_1, \dots, i_{k^E}\}$ and $J = \{j_1, \dots, j_{k^S}\}$. Note that $I$ is a subset of $\{1, \dots, N\}$ while $J$ is a multi-subset of $\{N+1, \dots, N+M\}$. With these notations, $(b_{I,J})_{\#I + \# J = k}$ is an homogeneous basis of $\Lambda^k\CC^{N|M}_q$. Denote $(b^{I,J})_{\#I + \# J = k}$ its dual basis and
define the following morphisms\footnote{See \cite[Appendix A]{RW2} for similar definitions in the symmetric case, \ie{}$N=0$.}: 

\begin{align*}
&\begin{array}{crcl}
\Lambda_{k,\ell}  :& \Lambda_q^k V_q\otimes \Lambda_q^\ell V_q  &\to     & \Lambda^{k+\ell}_q V_q \\
  & b_{I_1, J_1} \otimes b_{I_2, J_2} & \mapsto &
  \begin{cases}
q^{-\left|(I_2\cup J_2)< (I_1 \cup J_1) \right|} b_{I_1\sqcup I_2, J_1 \sqcup J_2} &   \textrm{if $I_1\cap I_2 =\emptyset$,}    \\
0 & \textrm{else.}
  \end{cases}
\end{array}
\\
&\begin{array}{crcl}
Y_{k,\ell}  :& \Lambda^{k+\ell}_q V_q   &\to     & \Lambda_q^k V_q\otimes \Lambda_q^\ell V_q  \\
  & b_{I,J}  &\mapsto &  \displaystyle{\sum_{\substack{I_1\sqcup I_2 = I, \, J_1 \sqcup J_2 = J \\ \#I_1 + \#J_1 = k, \, \# I_2 + \# J_2 = \ell }}  [J_1, J_2]q^{\left|(I_2\cup J_2)< (I_1 \cup J_1) \right|} b_{I_1, J_1}\otimes  b_{I_2,J_2}}
\end{array} \\
&\begin{array}{crcl}
\stackrel{\leftarrow}{\cup}_{k}  :& \CC(q)   &\to     & \Lambda_q^a V_q\otimes (\Lambda_q^k V_q)^*  \\
  & 1  &\mapsto &  \displaystyle{\sum_{\#I=k}  b_{I,J}\otimes  b^{I,J}}
\end{array} \\
&\begin{array}{crcl}
\stackrel{\leftarrow}{\cap}_{k}  :& (\Lambda_q^k V_q)^*\otimes \Lambda_q^k V_q    &\to     & \CC(q) \\
  &  f\otimes x &\mapsto &  f(x)
\end{array} \\
&\begin{array}{crcl}
\stackrel{\rightarrow}{\cup}_{k}  :& \CC(q)   &\to     & (\Lambda_q^k V_q)^*\otimes \Lambda_q^k V_q  \\
  & 1  &\mapsto &  \displaystyle{\sum_{\substack{k= k^E+ k^S\\ \#I=k^E,\, \#J= k^S}}{q^{\deg[\set{M+N}]{J} - \deg[\set{M+N}]{I}}}{(-1)^{k^S}}b^{I,J}\otimes  b_{I,J}}
\end{array} \\
&\begin{array}{crcl}
\stackrel{\rightarrow}{\cap}_{k}  :& \Lambda_q^k V_q\otimes( \Lambda_q^k V_q)^*    &\to     & \CC(q) \\
&   b_{I_1, J_1} \otimes b^{I_2, J_2} &\mapsto &
\displaystyle{{q^{\deg[\set{M+N}]{I_1} - \deg[\set{M+N}]{J_1}}}
{(-1)^{|J_1|}}\delta_{I_1,I_2}\delta_{J_1, J_2}}
\end{array}
\end{align*}

We should explain what  $|I<J|$ and $[I,J]$ mean. If $I$ and $J$ are two multi-subsets of an ordered set $X$,  define
\begin{align*}
&|I<J|= \prod_{x<y \in X}I(x)J(y) \quad \text{and} \\
&[I,J]=\prod_{x\in X} \qbin{I(x)}{J(x)}.
\end{align*}

\begin{prop}
  These maps are morphisms of $U_q(\gll_{N|M})$-modules.
\end{prop}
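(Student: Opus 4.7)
The strategy is a direct verification: for each of the six maps, we check commutativity with the action of every generator $L_i^{\pm 1}$, $E_i$, $F_i$ of $U_q(\gll_{N|M})$. Since $b_{I,J}$ is a weight vector whose $L_i$-weight depends only on the multiplicities of $i$ and $i+1$ in $I\cup J$, and since all six maps preserve the multisets $I\cup J$ (up to splitting or duality), commutation with the $L_i^{\pm 1}$ is immediate. So the content is in checking equivariance for the $E_i$ and $F_i$.

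For $\Lambda_{k,\ell}$ and $Y_{k,\ell}$, the plan is to expand $\Delta(F_i) = F_i\otimes 1 + L_i^{-1}L_{i+1}\otimes F_i$ (and the analogous formula for $E_i$) on a tensor $b_{I_1,J_1}\otimes b_{I_2,J_2}$, apply the definitions, and compare with the action of $F_i$ on $b_{I_1\sqcup I_2, J_1\sqcup J_2}$. Two kinds of terms appear: terms where $F_i$ acts on the left factor (unchanged by the coproduct twist) and terms where it acts on the right factor (picking up the $L_i^{-1}L_{i+1}$ twist, which on $b_{I_1,J_1}$ produces exactly the power of $q$ needed to match the $q^{-|(I_2\cup J_2)<(I_1\cup J_1)|}$ prefactor of $\Lambda_{k,\ell}$). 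The identity one really needs is that when $F_i$ adds an $i{+}1$ to the combined set, the sum of the two ways to split this creation between the two factors reassembles the single defining coefficient, using only Corollary~\ref{cor:sum-integers} for quantum integers. The $Y_{k,\ell}$ case is dual and uses Proposition~\ref{prop:dec-binomial} to split the binomial $[J_1\sqcup J_2,\, \emptyset]$ when $F_i$ changes the symmetric part.

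For the four cup/cap maps, the plan is standard categorical: $\stackrel{\leftarrow}{\cup}_k$ and $\stackrel{\leftarrow}{\cap}_k$ are the ordinary evaluation and coevaluation on $\Lambda_q^kV_q\otimes (\Lambda_q^kV_q)^*$, and their equivariance is a formal consequence of the Hopf algebra axioms for $S$ and $\epsilon$, together with the definition of the action of $U_q(\gll_{N|M})$ on duals by $x\cdot f = f\circ S(x)$. The right cup and cap $\stackrel{\rightarrow}{\cup}_k$ and $\stackrel{\rightarrow}{\cap}_k$ differ from the left ones by the insertion of the pivotal element: the scalar $q^{\deg[\set{M+N}]{J}-\deg[\set{M+N}]{I}}(-1)^{k^S}$ is precisely the eigenvalue on $b_{I,J}$ of the grouplike element $g = L_1^{N+M-1}L_2^{N+M-3}\cdots L_{N+M}^{-(N+M-1)}$ (up to the super sign accounting for $|b_{I,J}|\equiv k^S \pmod 2$) which implements $S^2$. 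Equivariance of the right cup and cap then follows from the identity $S^2(x)g = gx$ in any pivotal Hopf superalgebra.

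The main obstacle will be the super signs appearing at $i=N$, where $F_N$ and $E_N$ are odd. In this case the Koszul rule governing the action on tensor products introduces a sign $(-1)^{|x|}$ when $F_N$ moves past the left factor, and the multiset $J$ can have repeated entries in $\{N+1,\dots,N+M\}$ producing quantum binomials $\qbin{J(x)}{?}$. The verification must show that these super signs combine correctly with the factor $(-1)^{k^S}$ in $\stackrel{\rightarrow}{\cup}_k$ and $\stackrel{\rightarrow}{\cap}_k$, and with the sign $(-1)^{|b_i||b_j|}$ implicit in the definition of $\Lambda_q V_q$. All these checks are, as the paper stresses, mechanical once the bookkeeping is set up; no new ingredient beyond the Hopf axioms and the $q$-identities of Section~\ref{sec:qi} is required.
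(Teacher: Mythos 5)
Your sketch is correct and follows exactly the route the paper intends: the appendix states that all proofs there are omitted because they are direct verifications, and your plan (weight-space check for the $L_i^{\pm1}$, coproduct expansion for $E_i,F_i$ on $\Lambda_{k,\ell}$ and $Y_{k,\ell}$, Hopf-axiom formality for the left cup/cap, and the pivotal element $g$ with $S^2(x)=gxg^{-1}$ for the right cup/cap) is precisely that direct verification, with the pivotal eigenvalue $q^{\deg[\set{M+N}]{J}-\deg[\set{M+N}]{I}}(-1)^{k^S}$ correctly identified. No discrepancy with the paper's (omitted) argument.
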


Any MOY graph $\Gamma$ is isotopic  to a MOY graph $\Gamma'$ \emph{in good position}, this means that $\Gamma'$ can be obtained by vertically concatenating horizontal disjoint unions of the following elementary pieces:
\[
\NB{\tikz[scale=0.9]{\begin{scope}[font=\small]
  \begin{scope}[xshift= -0.5cm]
    \draw[->-]  (0, -0.5) -- +(0, 1) node [midway, left] {$k$};
  \end{scope}
  \begin{scope}[xshift=0.5cm]
    \draw[-<-]  (0, -0.5) -- +(0, 1) node [midway, left] {$\ell$};
  \end{scope}
  \begin{scope}[xshift=2cm]
    \draw[-<-]  (-0.5, -0.25) arc (180:0:0.5cm) node [midway, above] {$k$};
  \end{scope}
  \begin{scope}[xshift=4cm]
    \draw[->-]  (-0.5, -0.25) arc (180:0:0.5cm) node [midway, above] {$\ell$};
  \end{scope}
  \begin{scope}[xshift=6cm]
    \draw[-<-]  (-0.5,  0.25) arc (-180:0:0.5cm) node [midway, below] {$k$};
  \end{scope}
  \begin{scope}[xshift=8cm]
    \draw[->-]  (-0.5,  0.25) arc (-180:0:0.5cm) node [midway, below] {$\ell$};
  \end{scope}
  \begin{scope}[xshift=10cm]
    \draw[->] (0,0) -- (0,0.5) node [at end, above] {$k+\ell$};  
    \draw[>-] (-0.5, -0.5) -- (0,0) node [at start, below] {$k$};  
    \draw[>-] (+0.5, -0.5) -- (0,0) node [at start, below] {$\ell$};  
  \end{scope}
  \begin{scope}[xshift = 12cm, rotate= 180]
    \draw[-<] (0,0) -- (0,0.5) node [at end, below] {$k+\ell$};  
    \draw[<-] (-0.5, -0.5) -- (0,0) node [at start, above] {$\ell$};  
    \draw[<-] (+0.5, -0.5) -- (0,0) node [at start, above] {$k$};  
  \end{scope}
\end{scope}
 }}.
\]
When a MOY graph $\Gamma'$ is in good position, interpreting these elementary pieces using the morphisms
\[
  \id_{\Lambda^k_q\CC_q^{N|M}},\,\,
  \id_{(\Lambda^\ell_q\CC_q^{N|M})^*},\,\,
  \stackrel{\leftarrow}{\cap}_{k},\,\,
  \stackrel{\rightarrow}{\cap}_{\ell},\,\,
  \stackrel{\leftarrow}{\cup}_{k},\,\,
  \stackrel{\rightarrow}{\cup}_{\ell},\,\,
  \Lambda_{k,\ell},\,\,
  Y_{k,\ell}
\] horizontal disjoint unions as tensor products and vertical concatenations as compositions, one obtains a morphism of $U_q(\gll_{N|M})$-modules $\kups{\Gamma'}_{N|M}:\CC(q) \to \CC(q)$. One sees $\kups{\Gamma'}_{N|M}$ as an element of $\CC(q)$. Using the definition of the morphisms associated with the elementary pieces, one can show that $\kups{\Gamma'}_{N|M}$ is a Laurent polynomial in $q$ with integer coefficients and symmetric in $q$ and $q^{-1}$
If $\Gamma$ is an arbitrary MOY graph, define $\kups{\Gamma}_{N|M}$ to be $\kups{\Gamma'}_{N|M}$ for $\Gamma'$ a MOY graph in good position and isotopic to $\Gamma$.
\begin{prop}\label{prop:RT-welldefined}
  The Laurent polynomial $\kups{\Gamma}_{N|M}$ is well-defined \ie it does not depend on the MOY graph in good position and isotopic to $\Gamma$ chosen to compute it.
\end{prop}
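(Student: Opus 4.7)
The plan is to present the category of finite-dimensional $U_q(\gll_{N|M})$-modules as a pivotal $\CC(q)$-linear monoidal subcategory of $\svect$, and to check that the assignments of the elementary pieces to the morphisms $\id$, $\stackrel{\leftarrow}{\cup}_k$, $\stackrel{\leftarrow}{\cap}_k$, $\stackrel{\rightarrow}{\cup}_k$, $\stackrel{\rightarrow}{\cap}_k$, $\Lambda_{k,\ell}$ and $Y_{k,\ell}$ extend to a functor from the category of oriented planar trivalent graphs modulo isotopy into this pivotal category. Since any two MOY graphs in good position isotopic to $\Gamma$ are related by a finite sequence of elementary planar isotopy moves, the well-definedness of $\kups{\Gamma}_{N|M}$ follows once each elementary move is translated into an identity of morphisms that is verified directly.

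I would first check the four snake identities relating $\stackrel{\leftarrow}{\cup}_k$, $\stackrel{\leftarrow}{\cap}_k$, $\stackrel{\rightarrow}{\cup}_k$ and $\stackrel{\rightarrow}{\cap}_k$ by direct computation on the basis $(b_{I,J})$. The role of the super sign $(-1)^{\#J}$ and of the degree factor $q^{\deg[\set{M+N}]{J} - \deg[\set{M+N}]{I}}$ appearing in $\stackrel{\rightarrow}{\cup}_k$ and $\stackrel{\rightarrow}{\cap}_k$ is precisely to make these into the pivotal partners of the left-oriented cup and cap in the graded super setting, so the checks reduce to careful bookkeeping of $q$-powers and super signs. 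Horizontal interchange of parallel elementary pieces is automatic from the bifunctoriality of $\otimes$ in $\svect$.

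Next I would establish the cyclic (pivotal) invariance of the trivalent vertices, namely that rotating $\Lambda_{k,\ell}$ or $Y_{k,\ell}$ along cups and caps reproduces the other trivalent morphism with appropriately permuted indices. Concretely, one verifies identities of the form
\[
(\id \otimes \stackrel{\rightarrow}{\cap}_{k+\ell}) \circ (Y_{k,\ell}\otimes \id) \circ (\id \otimes \stackrel{\leftarrow}{\cup}_{k+\ell}) = \Lambda_{\ell,k}\circ \tau
\]
and their variants for the other orientations, where $\tau$ denotes the appropriate swap of tensor factors. This is the main obstacle: one must show that the $q$-binomial coefficients $[J_1,J_2]$ appearing in $Y_{k,\ell}$, the $q$-powers $q^{|(I_2\cup J_2) < (I_1 \cup J_1)|}$ in both trivalent morphisms, and the pivotal coefficients above conspire, after rotation, to produce exactly the coefficients of the target morphism. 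The computation parallels the one carried out for the symmetric case $N=0$ in \cite{RW2}, the only extra ingredient being the super signs introduced when interchanging odd basis vectors.

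Once the snake and cyclic identities are established, any two good-position representatives of $\Gamma$ are related by a finite sequence of these moves together with horizontal interchanges, and the associated scalar in $\CC(q)$ therefore depends only on the isotopy class of $\Gamma$.
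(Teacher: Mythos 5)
Your proposal is correct and follows exactly the route the paper intends: the appendix explicitly omits this proof as a ``direct verification,'' and your outline (snake identities for the four cups and caps, pivotal/cyclic compatibility of $\Lambda_{k,\ell}$ and $Y_{k,\ell}$, and horizontal interchange from bifunctoriality of $\otimes$ in $\svect$) is precisely the content of that verification. The comparison with the symmetric case in \cite{RW2} and the remark that the extra $q$-powers and signs in $\stackrel{\rightarrow}{\cup}_k$, $\stackrel{\rightarrow}{\cap}_k$ are exactly the pivotal corrections are both apt.
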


\begin{prop}\label{prop:rel-kups}
  The map $\kups{\bullet}_{N|M}: \{\text{MOY graphs} \} \to \ZZ[q, q^{-1}]$ is multiplicative under disjointe union \ie{}
  \begin{align*}
  \kups{\Gamma \sqcup \Upsilon}_{N|M}=\kups{\Gamma}_{N|M}\kups{\Upsilon}_{N|M} \quad \text{for all MOY graphs } \Gamma\mbox{ and } \Upsilon. 
  \end{align*}
   and satisfies the following the local relations and their mirror images:
  
  \begin{align}\label{eq:extrelcircle}
   \kups{\vcenter{\hbox{\tikz[scale= 0.5]{\draw[->] (0,0) arc(0:360:1cm) node[right] {\tiny{$\!k\!$}};}}}}_{N|M}=
\begin{bmatrix}
  N-M \\ k
\end{bmatrix}
\end{align}
\begin{align} \label{eq:extrelass}
   \kups{\stgamma}_{N|M} = \kups{\stgammaprime}_{N|M}
 \end{align}
\begin{align} \label{eq:extrelbin1} 
\kups{\digona}_{N|M} = \arraycolsep=2.5pt
  \begin{bmatrix}
    m+n \\ m
  \end{bmatrix}
\kups{\verta}_{N|M}
\end{align}
\begin{align} \label{eq:extrelbin2}
\arraycolsep=2.5pt
\kups{\digonb}_{N|M} = 
  \begin{bmatrix}
    N-M-m \\ n
  \end{bmatrix}
\kups{\vertb}_{N|M} 
\end{align}
\begin{align}
 \kups{\squarea}_{N|M} = \kups{\twoverta}_{N|M} + [N-M-m-1]\kups{\doubleYa}_{N|M} \label{eq:extrelsquare1}
\end{align}

\begin{align}
\kups{\squareb}_{N|M}=\!
  \begin{bmatrix}
    m-1 \\ n
  \end{bmatrix}
\kups{\bigHb}_{N|M}  +
\!\begin{bmatrix}
  m-1 \\n-1
\end{bmatrix}
\kups{\doubleYb}_{N|M} \label{eq:extrelsquare2}
\end{align}
\begin{align}
  \kups{\squarec}_{N|M}= \sum_{j=\max{(0, m-n)}}^m\begin{bmatrix}l \\ k-j \end{bmatrix}
 \kups{\squared}_{N|M}\label{eq:extrelsquare3}
\end{align}
\end{prop}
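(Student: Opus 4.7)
My plan is to verify each identity by direct computation on the explicit morphisms defined in the appendix, passing back and forth to the standard basis $(b_{I,J})$ of $\Lambda^k_q\CC^{N|M}_q$.

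The multiplicativity under disjoint union is immediate from the monoidal structure of the construction: when two MOY graphs $\Gamma,\Upsilon$ are put in good position side by side, the resulting morphism is the tensor product of $\kups{\Gamma}_{N|M}$ and $\kups{\Upsilon}_{N|M}$ viewed as morphisms $\CC(q)\to\CC(q)$, and tensor product of scalars is multiplication. For the circle relation \eqref{eq:extrelcircle}, I would place $\NB{\tikz[scale= 0.35]{\draw[->] (0,0) arc(0:360:1cm);}}$ in good position as $\stackrel{\rightarrow}{\cap}_{k}\circ c \circ \stackrel{\leftarrow}{\cup}_{k}$ (or the analogous composition for the opposite orientation). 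Evaluating on $1\in\CC(q)$ expands as a sum over pairs $(I,J)$ with $\#I+\#J=k$, $I\subset\set{N}$, $J\in\mps{\{N+1,\dots,N+M\}}$, where each term contributes $(-1)^{\#J}q^{\deg{}}$. Splitting by $k^{E}=\#I, k^{S}=\#J$ and invoking the final proposition of Section~\ref{sec:qi} together with Proposition~\ref{prop:dec-binomial} produces $\qbina{N-M}{k}$, exactly as in the proof of Lemma~\ref{lem:circles}.

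Relation \eqref{eq:extrelass} reduces to the coassociativity of the wedge product in $\Lambda_q\CC^{N|M}_q$: both sides compute the signed wedge of three basis vectors, and checking that $\Lambda_{i+j,k}\circ(\Lambda_{i,j}\otimes\id)=\Lambda_{i,j+k}\circ(\id\otimes\Lambda_{j,k})$ on $b_{I_1,J_1}\otimes b_{I_2,J_2}\otimes b_{I_3,J_3}$ amounts to checking that the signs and $q$-weights $q^{-|(I_k\cup J_k)<(I_l\cup J_l)|}$ accumulate the same way regardless of the order of bracketing. Relation \eqref{eq:extrelbin1} requires computing $Y_{m,n}\circ\Lambda_{m,n}$ on a basis element $b_{I,J}$; expanding via the definitions of $Y$ and $\Lambda$ produces a sum $\sum_{I=I_1\sqcup I_2, J=J_1\sqcup J_2}[J_1,J_2]q^{\ast}$ over all bipartitions, which, grouped by the multiplicity function, yields a product of quantum binomials equal to $\qbina{m+n}{m}$ by the $q$-binomial identities of Section~\ref{sec:qi}. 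For the ``bad digon'' \eqref{eq:extrelbin2}, one computes a partial supertrace involving $\stackrel{\rightarrow}{\cup}_{n}$ and $\stackrel{\leftarrow}{\cap}_{n}$ sandwiched between $Y$ and $\Lambda$; the super-signs $(-1)^{k^{S}}$ from the coevaluation are essential and yield, after the basis expansion, the scalar $\qbina{N-M-m}{n}$.

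The main obstacle will be relations \eqref{eq:extrelsquare1}, \eqref{eq:extrelsquare2}, \eqref{eq:extrelsquare3}, because a square involves four trivalent vertices, so the composition of $\Lambda$'s and $Y$'s gives a multi-index sum over internal edge colorings, and one must match this with the right-hand side term by term. My strategy here would be to evaluate both sides on a generic basis vector $b_{I,J}$ at the bottom boundary, expand one internal rung at a time, and recognize the resulting sum via Proposition~\ref{prop:dec-binomial} and Corollary~\ref{cor:anti-pascal}; alternatively, since the paper later proves in Theorem~\ref{thm:maintheorem} that the combinatorial evaluation $\kup{\bullet}_{N|M}$ satisfies the same multiplicativity and that $\kup{\bullet}_{N|M}=\kups{\bullet}_{N|M}$ is implied by matching on a spanning set of MOY graphs, one may alternatively verify \eqref{eq:extrelsquare1}--\eqref{eq:extrelsquare3} for $\kup{\bullet}_{N|M}$ using the lemmas of Section~\ref{sec:colorings-moy-graphs} (Lemma~\ref{lem:easy-square}, Lemma~\ref{lem:badsquare}, Corollary~\ref{cor:gen-square}), then transport to $\kups{\bullet}_{N|M}$ once that identification is established on simpler graphs. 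The careful bookkeeping of super-signs from braiding $c_{V,W}$ on $\svect$ and the $q^{\deg[\set{N+M}]{\cdot}}$ weights in $\stackrel{\rightarrow}{\cup}$ and $\stackrel{\rightarrow}{\cap}$ is the genuinely delicate point throughout.
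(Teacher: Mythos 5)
Your primary route---expanding each side of every relation on the explicit basis $(b_{I,J})$ using the morphisms $\Lambda_{k,\ell}$, $Y_{k,\ell}$, the cups and the caps---is exactly what the paper intends: the appendix states that all its proofs are omitted because they are direct verifications, so for the circle, associativity and digon relations your sketch is consistent with the paper's (unwritten) argument. One small slip: a counterclockwise circle in good position is $\stackrel{\rightarrow}{\cap}_{k}\circ\stackrel{\leftarrow}{\cup}_{k}$ with no braiding inserted; the composition $\stackrel{\rightarrow}{\cap}_{k}\circ c\circ\stackrel{\leftarrow}{\cup}_{k}$ does not even type-check, since $c$ sends $\Lambda^k_q V_q\otimes(\Lambda^k_q V_q)^*$ to $(\Lambda^k_q V_q)^*\otimes\Lambda^k_q V_q$, which is the domain of $\stackrel{\leftarrow}{\cap}_{k}$, not of $\stackrel{\rightarrow}{\cap}_{k}$.

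The genuine problem is your proposed fallback for the square relations \eqref{eq:extrelsquare1}--\eqref{eq:extrelsquare3}, which is circular and must be discarded. The logical order of the paper is: Proposition~\ref{prop:rel-kups} (relations for $\kups{\bullet}_{N|M}$) together with Proposition~\ref{prop:completeness} and the lemmas of Section~\ref{sec:colorings-moy-graphs} (the same relations for $\kup{\bullet}_{N|M}$) \emph{yield} Theorem~\ref{thm:maintheorem}, i.e.\ the identification $\kup{\bullet}_{N|M}=\kups{\bullet}_{N|M}$. You therefore cannot invoke that identification---not even ``on simpler graphs''---to transport Lemma~\ref{lem:easy-square}, Lemma~\ref{lem:badsquare} or Corollary~\ref{cor:gen-square} from $\kup{\bullet}_{N|M}$ to $\kups{\bullet}_{N|M}$: to evaluate $\kups{\bullet}_{N|M}$ on a graph containing a square by skein manipulations you would already need the square relations, and the only independent way to evaluate it is the direct computation you are trying to avoid. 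So the square relations must be established for $\kups{\bullet}_{N|M}$ by honest computation with the morphisms (evaluate the composite $\Lambda\circ(\,\cdot\,)\circ Y$ through the internal rungs on a basis vector and match coefficients via Proposition~\ref{prop:dec-binomial} and Corollary~\ref{cor:anti-pascal}), and your proposal does not carry this out; since you yourself single these out as the main obstacle, the attempt is incomplete precisely where the work lies.
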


\begin{prop}[{\cite{pre06302580}}]\label{prop:completeness}
  The multiplicativity property and the local relations given in Proposition~\ref{prop:rel-kups} are enough to compute the value of $\kups{\Gamma}_{N|M}$ for any MOY graph $\Gamma$.
\end{prop}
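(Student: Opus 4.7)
The plan is to prove this by strong induction on a complexity measure of $\Gamma$, using the planar structure of MOY graphs to locate reducible faces. By multiplicativity, it suffices to treat the case where $\Gamma$ is connected. The base case is when $\Gamma$ has no vertices: then $\Gamma$ is a single oriented circle, and (\ref{eq:extrelcircle}) gives its value.

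For the inductive step, I would use induction on the lexicographically ordered pair consisting of (sum of labels on edges, number of trivalent vertices). The key geometric input is that a connected planar trivalent graph embedded in $\RR^2$ (or equivalently in $S^2$ after one-point compactification) must contain a face with few boundary edges, by Euler's formula. I would distinguish cases according to the number of edges bounding an innermost face:
\begin{itemize}
\item \emph{Monogon}: an edge forming a loop, which by the flow condition would force an edge of label $0$, and can then be erased.
\item \emph{Bigon}: two edges between the same pair of vertices. Depending on whether the two orientations agree with each other, this is exactly the left-hand side of (\ref{eq:extrelbin1}) or (\ref{eq:extrelbin2}), and applying the relation strictly decreases the complexity.
\item \emph{Square (tetragon)}: a face bounded by four edges. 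After using the associativity relation (\ref{eq:extrelass}) to reorganize the trivalent vertices into the standard pattern, the square matches the left-hand side of one of (\ref{eq:extrelsquare1}), (\ref{eq:extrelsquare2}), (\ref{eq:extrelsquare3}) (or its mirror); each of these expresses the configuration in terms of simpler graphs with strictly smaller complexity.
\end{itemize}

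The main obstacle is proving that one can always find a face of type monogon, bigon, or square, possibly after rearranging by associativity. The naive Euler-characteristic bound guarantees a face of size at most some constant, but not necessarily at most four, so one needs a sharper argument: one works with the \emph{cabling} of $\Gamma$ (Definition~\ref{dfn:cabling-and-numbers}), which is a disjoint union of oriented simple closed curves, and uses the innermost disk bounded by such curves to locate a small face of $\Gamma$ itself. In the worst case (no small face is immediately present), one performs associativity moves to migrate trivalent vertices and expose a bigon or square; the existence of such a sequence of moves is the technical content of \cite{pre06302580}.

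Once the key lemma is established, the induction concludes: applying one of the relations (\ref{eq:extrelcircle})--(\ref{eq:extrelsquare3}) or (\ref{eq:extrelass}) expresses $\kups{\Gamma}_{N|M}$ as a $\ZZ[q,q^{-1}]$-linear combination of $\kups{\Gamma'}_{N|M}$ for MOY graphs $\Gamma'$ of strictly smaller complexity, to which the induction hypothesis applies. This provides an algorithm (not canonical, but well-defined by Proposition~\ref{prop:RT-welldefined}) computing $\kups{\Gamma}_{N|M}$ from the local relations alone.
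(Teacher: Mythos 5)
The paper gives no internal argument for this proposition --- it is imported wholesale from \cite{pre06302580} --- so there is nothing to compare your sketch against except the statement itself, and judged on its own terms your sketch has a genuine hole at the decisive step. The skeleton is right (reduce to connected graphs by multiplicativity, evaluate circles by (\ref{eq:extrelcircle}), induct on a complexity measure), but Euler's formula for a connected planar trivalent graph only guarantees a face with at most \emph{five} sides, and Proposition~\ref{prop:rel-kups} contains no pentagon relation, so the case analysis ``monogon / bigon / square'' does not exhaust the possibilities for an innermost face. You acknowledge this and propose to repair it by using (\ref{eq:extrelass}) to migrate vertices until a bigon or square appears, but you then write that the existence of such a sequence of moves ``is the technical content of \cite{pre06302580}''. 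That existence statement \emph{is} the proposition; deferring it to the citation means the key lemma is asserted, not proved. A secondary gap of the same kind: even for quadrilateral faces you would need to check that (\ref{eq:extrelsquare1})--(\ref{eq:extrelsquare3}) and their mirror images cover every orientation and merge/split pattern that can occur around a four-sided face, which you assert without verification.

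If you want a self-contained argument, the reduction used in \cite{pre06302580} is not organized around innermost faces at all: one puts $\Gamma$ in good position with respect to a height function and inducts on the width (the sum of the labels cut by a generic horizontal line) together with the number of critical levels, using (\ref{eq:extrelcircle}), (\ref{eq:extrelbin2}) and (\ref{eq:extrelsquare1}) to strictly decrease the width and the remaining relations to reorganize configurations of equal width. That Morse-theoretic induction avoids the pentagon problem entirely, whereas the Euler-characteristic route would require you to prove a nontrivial normal-form statement about planar trivalent flow graphs that the listed relations do not obviously supply.
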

From this statement we immediately get the following corollary which should be compared with \cite[Theorem 4.7]{2015arXiv150603329Q}.

\begin{cor}\label{cor:depends-N-M}
  The Laurent polynomial $\kups{\Gamma}_{N|M}$ only depends on $\Gamma$ and $N-M$.
\end{cor}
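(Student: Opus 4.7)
The plan is to invoke Proposition~\ref{prop:completeness} and then simply inspect each of the local relations listed in Proposition~\ref{prop:rel-kups} to check that the parameters $N$ and $M$ enter only through the combination $N-M$. Proposition~\ref{prop:completeness} tells us that the multiplicativity under disjoint union, together with the relations~(\ref{eq:extrelcircle})--(\ref{eq:extrelsquare3}), suffice to compute $\kups{\Gamma}_{N|M}$ for \emph{any} MOY graph $\Gamma$: one can always reduce $\Gamma$ to a disjoint union of circles by applying these relations.

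Next, I would scan the relations. The base case~(\ref{eq:extrelcircle}) assigns to a circle of label $k$ the binomial $\left[\begin{smallmatrix} N-M \\ k \end{smallmatrix}\right]$, which visibly depends on $(N,M)$ only through $N-M$. The associativity~(\ref{eq:extrelass}), the digon relation~(\ref{eq:extrelbin1}), the I-shape relation~(\ref{eq:extrelsquare2}) and the square relation~(\ref{eq:extrelsquare3}) involve no $N$ or $M$ at all in their coefficients. The remaining relations~(\ref{eq:extrelbin2}) and~(\ref{eq:extrelsquare1}) have coefficients $\left[\begin{smallmatrix} N-M-m \\ n \end{smallmatrix}\right]$ and $[N-M-m-1]$ respectively, which again depend on $(N,M)$ only through $N-M$.

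Thus every step in the reduction procedure provided by Proposition~\ref{prop:completeness} produces coefficients that are Laurent polynomials in $q$ whose only dependence on the pair $(N,M)$ is through the difference $N-M$. Since the final answer is built from these coefficients and from the base circle values (which also depend only on $N-M$), the Laurent polynomial $\kups{\Gamma}_{N|M}$ depends only on $\Gamma$ and $N-M$, which is the desired statement.

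There is essentially no obstacle here: the corollary is a direct consequence of the shape of the relations in Proposition~\ref{prop:rel-kups} together with their completeness. The only nontrivial content is already contained in Proposition~\ref{prop:completeness}, which guarantees that these relations alone determine the evaluation.
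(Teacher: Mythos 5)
Your proposal is correct and is exactly the argument the paper intends: the corollary is stated as an immediate consequence of Proposition~\ref{prop:completeness}, since all coefficients appearing in the multiplicativity property and in relations~(\ref{eq:extrelcircle})--(\ref{eq:extrelsquare3}) involve $N$ and $M$ only through $N-M$. Your explicit relation-by-relation check simply spells out what the paper leaves implicit.
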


Two special cases are especially easy to compute: $N-M=\pm1$.

\begin{prop}
  Suppose $N-M=1$, $\kups{\Gamma}_{N|M}=0$ unless $\Gamma$ is a (maybe empty) collection of circles of label $1$ and
$\kups{\bigsqcup_i\vcenter{\hbox{\tikz[scale= 0.3]{\draw (0,0) arc(0:360:1cm) node[right] {\tiny{$\!\!1\!$}};}}}}_{N|M}=1$.
\end{prop}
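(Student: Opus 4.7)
The plan is to reduce to the case $(N,M)=(1,0)$ using Corollary~\ref{cor:depends-N-M}, and then compute $\kups{\Gamma}_{1|0}=\kup{\Gamma}_{1|0}$ via the state sum (Theorem~\ref{thm:maintheorem}). By Definition~\ref{dfn:coloring}, a $(1|0)$-coloring of $\Gamma$ consists of a single cyclic sub-MOY graph $\Gamma^E_1$ with $\Gamma^E_1=\Gamma$ (and no $\Gamma^S$ component). Hence $\col{\Gamma}$ is nonempty if and only if every edge of $\Gamma$ has label at most $1$; in that case the coloring is unique, and otherwise $\kup{\Gamma}_{1|0}=0$, giving the first half of the claim.

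Next, I would analyze the structure of cyclic MOY graphs. At each trivalent vertex the labels $(a,b,a+b)$ must satisfy $a+b\leq 1$, forcing at least one of the three incident edges to be labeled $0$. Since label-$0$ edges may be erased without altering the MOY graph (as noted in the remark after Definition~\ref{dfn:abstractMOYgraph}), every cyclic $\Gamma$ reduces to a disjoint union of simple circles of label $1$.

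Finally, I would compute the contribution of the unique $(1|0)$-coloring $c$ of such a disjoint union. By Lemma~\ref{lem:disjointunion} it suffices to evaluate a single circle $U$ of label $1$: there are no split vertices (so $m(c)=1$), no $\Gamma^S$ component (so $s(c)=0$ and the two sums in $w_s$ involving $\Gamma^S$ vanish), and only one $\Gamma^E$ component (so the remaining sum in $w_s$ is empty, giving $w_s(c)=0$). For the $\rho$-weight, the sole coefficient is $N+M-2\cdot 1+1 = 0$ when $(N,M)=(1,0)$, so $w_\rho(c) = 0 \cdot \rho(\Gamma^E_1) = 0$. Therefore $\kup{U,c}_{1|0} = (-1)^0 q^0 \cdot 1 = 1$, and multiplicativity yields $\kup{\Gamma}_{1|0}=1$ for any disjoint union of label-$1$ circles.

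The argument is essentially a collapse-of-the-state-sum calculation; the only substantive input is Corollary~\ref{cor:depends-N-M}, which lets us pick the smallest representative $(N,M)=(1,0)$ where both the cyclicity constraint on $\Gamma^E_1$ and the vanishing of the rotational coefficient $N+M-1$ trivialize almost everything. If one wished to avoid this reduction, an alternative route would be to argue directly from the algebraic definition in Appendix~\ref{sec:moy-graphs-an}: since $\Lambda^k_q\CC^{1|0}_q = 0$ for $k\geq 2$, any edge of $\Gamma$ with label $\geq 2$ kills the Reshetikhin--Turaev morphism, and the remaining case of label-$1$ circles is handled by the unit computation $\stackrel{\rightarrow}{\cap}_1 \circ c \circ \stackrel{\leftarrow}{\cup}_1 = \sdim \Lambda^1_q\CC^{1|0}_q = 1$.
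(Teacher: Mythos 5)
Your proof is correct. The paper itself omits the argument (the appendix declares all its proofs to be ``direct verifications'', presumably meaning a computation straight from the Reshetikhin--Turaev morphisms or from the local relations of Proposition~\ref{prop:rel-kups}), whereas you route through Corollary~\ref{cor:depends-N-M} to reduce to $(N,M)=(1,0)$ and then through Theorem~\ref{thm:maintheorem} to collapse everything onto the combinatorial state sum. This is a legitimate and clean alternative: at $(1,0)$ the coloring set is empty as soon as some edge has label $\geq 2$, the dichotomy ``admits a coloring'' versus ``is not a union of label-$1$ circles after erasing label-$0$ edges'' is exactly complementary (your vertex analysis $a+b\leq 1$ handles this), and the unique coloring of a union of label-$1$ circles has all weights zero. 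What the reduction buys you is avoiding the fact that for general $N-M=1$ the modules $\Lambda^k_q\CC^{N|M}_q$ with $k\geq 2$ are \emph{nonzero} (only their superdimension vanishes), so a direct algebraic verification would have to work harder than ``the module is zero''. Your closing aside is consistent with this: the vanishing $\Lambda^k_q\CC^{1|0}_q=0$ for $k\geq 2$ is special to $(1,0)$. One cosmetic slip there: the circle of label $1$ is the composite $\stackrel{\rightarrow}{\cap}_1\circ\stackrel{\leftarrow}{\cup}_1$ with no braiding inserted; the spurious $c$ does not affect your main argument, which never uses that remark.
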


\begin{prop}
  Suppose $N-M=-1$, then for any MOY graph $\Gamma$, the following identity holds:
  \[
    \kups{\Gamma}_{N|M}=  (-1)^{\rho(\Gamma)} b(\Gamma),\]
where $b(\Gamma)$ is given in Definition~\ref{dfn:sym1}.
\end{prop}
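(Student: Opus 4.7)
The plan is to exploit Corollary~\ref{cor:depends-N-M}: since $\kups{\Gamma}_{N|M}$ depends only on $N-M$, it suffices to compute $\kups{\Gamma}_{N|M}$ at the simplest instance, $(N,M) = (0,1)$. By Theorem~\ref{thm:maintheorem}, this algebraic evaluation equals the combinatorial one $\kup{\Gamma}_{0|1}$, which I would compute directly from Definition~\ref{dfn:weight}.

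At $(N,M) = (0,1)$, an $(N|M)$-coloring $(\underline{\Gamma^E}, \underline{\Gamma^S})$ consists of an empty $0$-tuple $\underline{\Gamma^E}$ together with a single sub-MOY graph $\Gamma^S_1$. The defining condition $\sum_j \Gamma^E_j + \sum_i \Gamma^S_i = \Gamma$ then forces $\Gamma^S_1 = \Gamma$, so the set $\col[0|1]{\Gamma}$ is a singleton $\{c\}$. Reading off the definitions one finds $s(c) = \rho(\Gamma^S_1) = \rho(\Gamma)$ and $m(c) = b(\Gamma^S_1) = b(\Gamma)$; furthermore $w_s(c) = 0$ because the three sums in its definition are all indexed over empty ranges (there are no pairs with $1 \leq i < j \leq M = 1$, and all the $E$-indexed sums are empty since $N = 0$); and $w_\rho(c) = (N+M - 2\cdot 1 + 1)\rho(\Gamma^S_1) = 0\cdot \rho(\Gamma) = 0$. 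Therefore
\[
\kup{\Gamma}_{0|1} = \kup{\Gamma,c}_{0|1} = (-1)^{\rho(\Gamma)}\, q^{0}\, b(\Gamma) = (-1)^{\rho(\Gamma)} b(\Gamma),
\]
and the desired identity follows after invoking the corollary.

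The main obstacle: essentially none, once one notices that the coloring space collapses to a single element at $(N,M)=(0,1)$ and that the coefficient $N+M - 2j + 1$ appearing in $w_\rho$ happens to vanish on the unique summand. An alternative approach, avoiding Corollary~\ref{cor:depends-N-M}, would be to invoke Proposition~\ref{prop:completeness} and verify that $\Gamma \mapsto (-1)^{\rho(\Gamma)} b(\Gamma)$ satisfies the local relations (\ref{eq:extrelcircle})--(\ref{eq:extrelsquare3}) directly at $N-M=-1$; this is feasible but considerably more work, since it requires a case-by-case cabling analysis to determine how each local modification changes $\rho$ (through newly created circles) and how the split vertex count changes $b$, combined with the quantum-binomial identities of Section~\ref{sec:qi} such as $\qbina{-1-m}{n} = (-1)^n \qbina{m+n}{n}$.
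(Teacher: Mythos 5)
Your proof is correct. Note that the paper itself gives no argument for this proposition: the appendix states that all its proofs are omitted as ``direct verifications,'' so there is nothing to match step for step. Your route --- reduce to $(N,M)=(0,1)$ via Corollary~\ref{cor:depends-N-M}, convert to the combinatorial evaluation via Theorem~\ref{thm:maintheorem}, and observe that $\col[0|1]{\Gamma}$ is the singleton $\{(\,\emptyset, (\Gamma))\}$ with $w_s(c)=w_\rho(c)=0$, $s(c)=\rho(\Gamma)$ and $m(c)=b(\Gamma)$ --- is valid and probably the most economical one available with the tools of the paper; in particular the vanishing of the coefficient $N+M-2\cdot 1+1$ at $(0,1)$ is exactly the observation that makes the computation collapse. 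The verification the authors presumably had in mind is either the purely algebraic one (at $(0,1)$ every $\Lambda^k_q\CC^{0|1}_q$ is one-dimensional, so $\kups{\Gamma}_{0|1}$ is a product of the scalars contributed by the elementary morphisms, with each split vertex contributing $\qbinil{k+\ell}{\ell}$ and the orientation-reversing cups and caps contributing the signs that assemble into $(-1)^{\rho(\Gamma)}$), or the inductive one via Proposition~\ref{prop:completeness} that you sketch as an alternative. Your argument buys a shorter proof at the cost of invoking the main theorem of Section~2, which is harmless here since that theorem does not depend on the proposition; the direct verifications stay entirely inside the appendix. One cosmetic remark: it is worth saying explicitly that $\Gamma$ is a legitimate value for $\Gamma^S_1$ because only the $E$-components of a coloring are required to be cyclic, which is the reason the argument cannot be run at $(1,0)$ for the case $N-M=+1$.
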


\bibliographystyle{alphaurl}
\bibliography{biblio}

\end{document}